\theoremstyle{plain}
\newtheorem{Thm}{Theorem}[section]
\newtheorem{Def}[Thm]{Definition}
\newtheorem{Lem}[Thm]{Lemma}
\newtheorem{Coro}[Thm]{Corollary}
\newtheorem{Rem}[Thm]{Remark}
\newtheorem{Prop}[Thm]{Proposition}
\newcommand{\qe}{}
\newcommand{\tr}{\text{tr}}
\newcommand{\adj}{\text{adj}}
\newcommand{\R}{\mathbf{R}}
\newcommand{\M}{\mathbf{M}}
\newcommand{\Rd}{{\mathbf{R}^d}}
\newcommand{\C}{\mathbf{C}}
\renewcommand{\P}{\mathbf{P}}
\renewcommand{\S}{\mathbf{S}}
\renewcommand{\Re}{\text{Re}}
\renewcommand{\Im}{\text{Im}}
\newcommand{\F}{\mathscr{F}}
\newcommand{\N}{\mathbf{N}}
\DeclareMathOperator*{\weaklim}{weak-lim}
\DeclareMathOperator*{\slim}{s-lim}
\newcommand{\B}{\widehat{B}(y)}
\newcommand{\BB}{\widehat{B_0}(y)}
\newcommand{\A}{\widehat{A}(y)}
\newcommand{\Ker}{\text{Ker}}
\newcommand{\<}{\langle}
\renewcommand{\>}{\rangle}
\begin{document}

\title{On the Cauchy problem of the Boltzmann equation with a very soft potential}
\author{Dingqun Deng\\
Department of Mathematics, City University of Hong Kong\\
e-mail: dingqdeng2-c@my.cityu.edu.hk}


\maketitle

\begin{abstract}
The Cauchy problem for the Boltzmann equation with soft potential, in the framework of small perturbation of an equilibrium state, has been studied in many spaces. 
The method of strongly continuous semigroup has been applied by Caflisch\cite{Caflisch1980a} and Ukai-Asano\cite{Ukai1982} for the case of soft potential, where they obtained the $L^\infty$ solution without requiring any velocity deviation.
By generalizing the estimate on linearized collision operator $L$ to the case of very soft potential, we obtain a similar global existence result for $\gamma\in[0,d)$.
For soft potential, the spectrum structure of the linearized Boltzmann operator couldn't give spectral gap, so we use the method of integration by parts and consider a weighted velocity space in order to obtain algebraic decay in time.

 {\bf Keywords:}
  Boltzmann equation, linearized collision operator, global existence, soft potential, strongly continuous semigroup.
\end{abstract}

\tableofcontents

\section{Introduction}

Consider the Cauchy problem of the Boltzmann equation in $d$-dimension:
\begin{align}\label{I_e1}
  f_t + \xi\cdot\nabla_x f = Q(f,f),\qquad f|_{t=0}=f_0.
\end{align}
Here $f = f(x,\xi,t)$ is the distribution function of particle at position $x\in\Rd$ with velocity $\xi\in\Rd$ at time $t\ge 0$, $Q(f,g)$ is the bilinear collision operator defined by
\begin{align}
  Q(f,g) := \int_{\Rd}\int_{S^{d-1}} (f'_*g' + f'g'_* - f_*g - fg_*) q(\xi-\xi_*,\theta)\, d\omega d\xi_*
\end{align}
where
\begin{align*}
  f'_* = f(x,\xi'_*,t),\ \
  f' = f(x,\xi',t),\ \
  f_* = f(x,\xi_*,t),\ \
  f = f(x,\xi,t),
\end{align*}
and similarly for $g$, and
\begin{align}
  \xi' = \xi - \left((\xi-\xi_*)\cdot\omega \right)\omega,\quad
  \xi'_* = \xi_* + \left((\xi-\xi_*)\cdot\omega \right)\omega,
\end{align}
where $\omega\in S^{d-1}$. Here
$(\xi,\xi_*)$ are the velocities of two gas particles before collision and
$(\xi',\xi'_*)$ are the velocities after collision,
while $\theta\in [0,\pi]$ is the angle between the first variable of $q$ and $\omega$. For example, if $q=q(\xi-\xi_*,\theta)$, then
\begin{align}
  \cos\theta = \frac{(\xi - \xi_*)\cdot\omega}{|\xi - \xi_*|},
\end{align}

The function $q$ is the collision kernel determined by the interaction potential model between two colliding particles.
In this paper, we will only consider the Grad's angular cut-off assumption.
That is,
\begin{align}\label{I_cutoffassumption}
  q(\xi-\xi_*,\theta) = |\xi-\xi_*|^{-\gamma} b(\cos\theta),
\end{align}
with $|b(\cos\theta)|\le q_0|\cos\theta|$, for some $q_0>0$ and also $q$ is almost everywhere positive.
We usually call it hard potential if $\gamma\in[-1,0)$ and soft potential if $\gamma\in(0,1)$ and very soft potential if $\gamma\in(0,d)$.

We are looking for a solution $f$ near the equilibrium, that is the global Maxwellian $\M$. Suppose the solution has the form
\begin{align}\label{I_e2}
  f = \M + \M^{\frac{1}{2}}g.
\end{align}
By a translation and scaling of the velocity variable $\xi$, without loss of generality, we will take
\begin{align}
  \M = \frac{1}{(2\pi)^{d/2}} \exp (-\frac{|\xi|^2}{2}).
\end{align}
Substitute \eqref{I_e2} into \eqref{I_e1}, and notice a global Maxwellian is collision invariant, we have
\begin{align}\label{I_intro1}
  g_t + \xi\cdot\nabla_x g = Lg + \Gamma(g,g),
\end{align}
where $\Gamma(g,h):=\M^{-1/2}Q(\M^{1/2}g,\M^{1/2}h)$ and $L$ is a linear operator defined by
\begin{align*}
  Lg := \M^{-1/2}\left( Q(\M^{1/2}g,\M) + Q(\M,\M^{1/2}g)\right).
\end{align*}

{\bf Notation} We introduce a weighted normed space for finding the solution as follows. Define
$H^l(\R^d_x)$ to be the standard Sobolev space and
\begin{align*}
  \|f\|_{L^p_\beta} :&= \|(1+|\xi|)^\beta f\|_{L^p},\\
  L^p_\beta(\Rd) :&= \{f:\Rd\to\C\ |\ \|f\|_{L^p_\beta}<\infty\}.
\end{align*}
Also we denote some multi-variable space.
\begin{align*}
  L^p_\beta(H^l):&= L^p_\beta(\R^d_\xi;(H^l(\R^d_x))),\\ L^\infty_\alpha(L^p_\beta(H^l)) :&=L^\infty_\alpha(\R_t;L^p_\beta(\R^d_\xi;(H^l(\R^d_x)))).
\end{align*}
In addition, for any linear operator $T$ acting on normed space $X$, we denote its resolvent set, spectrum and point spectrum respectively by
\begin{align*}
  \rho(T) :&=\{\lambda\in\C : \lambda I-T \text{ is bijective and }(\lambda I- T)^{-1} \text{ is continuous on }X\},\\
  \sigma(T) :&= \C\setminus\rho(T),\\
  \sigma_p(T) :&= \{\lambda\in\sigma(T) : \lambda\text{ is an eigenvalue of }T\}.
\end{align*}
Also we define the a half space in $\C$ by
\begin{align*}
  \C_+ := \{\lambda\in\C:\Re\lambda>0\}.
\end{align*}
Let $X$ to be a metric space and $Y$ to be a normed space and define $C(X;Y)$ and $BC(X;Y)$ as the following.
\begin{align*}
C(X;Y)&=\{f:X\to Y \,|\,f \text{ is continuous from }X\text{ to }Y\},\\
BC(X;Y)&=\{f\in C(X;Y) \,|\,\sup_{x\in X}\|f\|_Y<\infty\}.
\end{align*}

The goal of present paper is to find a solution to the Cauchy problem of equation \eqref{I_intro1} by applying semigroup theory to operator
\begin{align*}
  B = -\xi\cdot\nabla_x + L,
\end{align*}where we regard $B$ as an operator acting on $L^p_\beta(H^l)$.
The properties of $L$ has been well studied by Caflisch in \cite{Caflisch1980} and by Ukai-Asano in \cite{Ukai1982}. The linearized collision operator has expression $L=-\nu+K$, where $\nu$ is a function satisfying
$\nu(\xi)\sim (1+|\xi|)^\gamma$ and $K$ is an integral operator with kernel $k$.
About the global existence of Cauchy problem for Boltzmann equation \eqref{I_intro1}, some previous good result are established in different situation. Caflisch\cite{Caflisch1980a} gives a solution with exponential time decay in the case of $\gamma\in(0,1)$ on torus $\mathbf{T}_x$ when the initial data has exponential velocity decay.
Ukai-Asano\cite{Ukai1982} proves the existence in space $L^\infty_\alpha(L^\infty_\beta(H^l))$, the same as this paper, when $\gamma\in(0,1)$ and gives algebraic time decay.
Guo\cite{Guo2003} and Strain-Guo\cite{Strain2007} find global solution on torus $\mathbf{T}_x$ for very soft potential $\gamma\in(0,3)$, but requiring higher order of derivatives in velocity of initial data. They also find the exponential time decay when the initial data has exponential velocity decay.

In the present paper, we will establish the global existence to the Cauchy problem of equation \eqref{I_intro1} in space $L^\infty_\alpha(L^\infty_\beta(H^l))$ without any velocity derivation, and finally we can give a algebraic time decay.

Firstly, we generalize the estimate of $K$, defined in \ref{I_ThmL1}, given by Caflisch\cite{Caflisch1980} and Ukai-Asano\cite{Ukai1982} from $\gamma\in[0,1)$ to $\gamma\in[0,d)$. They are:
\begin{align*}
  &\|Kf\|_{L^p_{\beta+\gamma+2}}\le C_{\gamma,d,q,p}\|f\|_{L^p_\beta},\\
  &\|Kf\|_{L^{q_\theta}_{\beta+\gamma+1}}\le C_{\gamma,d,q,p,\theta}\|f\|_{L^{p_\theta}_\beta},
\end{align*}
  where $\frac{1}{q_\theta} = \frac{\theta}{\infty} + \frac{1-\theta}{1},
    \frac{1}{p_\theta} = \frac{\theta}{\frac{d}{d-2}+\frac{d}{\gamma}} + \frac{1-\theta}{1}.$
  We found that these estimates are still valid for $\gamma\in[0,d)$, by using a slightly different technique. 
With the good properties of $K$, we can establish the estimate on semigroup $e^{tB}$. To do this, we will define $P:L^2\to$ Ker$L$ to be the orthogonal projection from $L^2$ to the kernel of $L$ and
\begin{align*}
  \widehat{B}(y) :&= -2\pi i y\cdot\xi + L,\\
  \BB :&= -2\pi i y\cdot\xi + L -P
\end{align*}
as two linear unbounded operators acting on $L^2_\beta(\R^d_\xi)$.
In order to obtain the estimate of semigroup $e^{t\B}$, we need to analyze the behavior of the resolvent $(\lambda I - \B)^{-1}$ of $\B$. When $|y|$ is large, the resolvent of $\B$ has a good property obtained in \cite{Ukai1982}. While $(\lambda I - \B)^{-1}$ has a singular behavior near $|y|=0$.
Using resolvent identity, we have
\begin{align*}
  &(\lambda I - \B)^{-1}\\ &= (\lambda I - \BB)^{-1}
  + (\lambda I - \BB)^{-1}P(I-P(\lambda I - \BB)^{-1}P)^{-1}P(\lambda I - \BB)^{-1}.
\end{align*}
So the singularity near $y=0$ comes from the operator $(I-P(\lambda I - \BB)^{-1}P)^{-1}$ acting on Ker$L$.
In this paper, we will follow Ukai-Asano's idea in \cite{Ukai1982} but use a different approach to obtain the eigenvalues of $P(\lambda I - \BB)^{-1}P$ and its asymptotic behavior. The method in this paper is similar to \cite{Liu2011}.
Wirte $r=|y|$. We will find in section 4 that the singular points of $(I-P(\lambda I - \BB)^{-1}P)^{-1}$ near $y=0$ are $\lambda_j(r) = \sigma_j(r)+i\tau_j(r)\in C^\infty(B(0,r_2))$ for some small $r_2$ and their asymptotic behavior near $r=0$ are
  \begin{align*}
    \sigma_j(r) &= -\sigma_j^{(2)}r^2 + O(r^3),\\
    \tau_j(r) &= \tau_j^{(1)}r + O(r^3),
  \end{align*}as $r\to 0$,
  where $\tau_j^{(1)}\in\R$, $\sigma_j^{(2)}<0$.
  They have the same asymptotic behaviors as the eigenvalues of $(\lambda I - \B)^{-1}$ given in \cite{Ellis1975,Yang2016}.
  So it turns out that the singular behavior of $(I-P(\lambda I - \BB)^{-1}P)^{-1}$ is exactly the same as $(\lambda I - \B)^{-1}$ near $y=0$.

With the well-studied properties on operator $(\lambda I - \B)^{-1}$, one can get the estimate on semigroup $e^{t\B}$ by using the inversion formula of semigroup. For the soft potential, there's no spectral gap to get a good decay on time $t$ as in the hard potential case. However, we can use the method of integration by parts to construct a algebraic decay on time $t$ with a stronger assumption on initial data $f_0$. So we will use the weighted normed space $L^2_\beta$ on velocity to find our solution. Combining the inverse Fourier transform formula as well as the Duhamel formula, we can get a good boundedness on semigroup $e^{tB}$.

Once we have the estimate on semigroup $e^{tB}$, we can obtain our global existence result.
\begin{Thm}	\label{Main_Theorem} Assume the cross-section $q$ satisfies the angular cut-off assumption \eqref{I_cutoffassumption}.
	Assume $d\ge 3$, $\gamma\in[0,d)$, $l>\frac{d}{2}$, $\beta>\frac{d}{2}$, $p\in[1,2)$ such that $\frac{d}{4}(\frac{2}{p}-1)>1/2$. Let $\alpha\in [\frac{1}{2},\min(\frac{d}{4}(\frac{2}{p}-1),1))$. 
	There exists constants $A_0<1, A_1$ such that if the initial data $f_0\in L^\infty_{\beta+\alpha\gamma}(H^l)\cap L^2_{\alpha\gamma}(L^p)$ satisfies
	\begin{align}
	\|f_0\|_{L^\infty_{\beta+\alpha\gamma}(H^l)} + \|f_0\|_{L^2_{\alpha\gamma}(L^p)}\le A_0.
	\end{align}
	Let $X=\{f\in L^\infty_{\alpha}(L^\infty_{\beta}(H^l)): \|f\|_{L^\infty_{\alpha}(L^\infty_{\beta}(H^l))}\le A_1\}$.
	Then the Cauchy problem to Boltzmann equation
	\begin{equation}\left\{
	\begin{aligned}
	f_t + \xi\cdot\nabla_x f &= Lf + \Gamma(f,f),\\
	f|_{t=0} &= f_0.
	\end{aligned}\right.
	\end{equation}
	posseses a unique solution $f=f(t)\in X \cap BC^0([0,\infty);L^\infty_{\beta}(H^l))\cap BC^1([0,\infty);L^\infty_{\beta-1}(H^{l-1}))$ and
	\begin{align}
	\|f\|_{L^\infty_{\alpha}(L^\infty_{\beta}(H^l))} + \|\partial_tf\|_{L^\infty_{\alpha}(L^\infty_{\beta-1}(H^{l-1}))}
	&\le C_{\nu,\gamma,\alpha,\beta,d}\Big(\|f_0\|_{L^\infty_{\beta+\alpha\gamma}(H^l)} + \|f_0\|_{L^2_{\alpha\gamma}(L^p)}\Big).
	\end{align}The uniqueness is taken in the sense that
$f\in X$.
\end{Thm}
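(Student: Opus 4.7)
The plan is to apply a contraction mapping argument to the mild (Duhamel) formulation of the equation,
\begin{align*}
f(t) = e^{tB}f_0 + \int_0^t e^{(t-s)B}\Gamma(f(s),f(s))\, ds,
\end{align*}
on the closed ball $X$. Define $\Phi(f)$ to be the right-hand side. The goal is to pick $A_0$ small enough (with $A_1$ chosen in terms of the semigroup constant) so that $\Phi$ maps $X$ into itself and is a strict contraction in $L^\infty_\alpha(L^\infty_\beta(H^l))$. The unique fixed point is the desired solution; the regularity statements $f\in BC^0([0,\infty);L^\infty_\beta(H^l))\cap BC^1([0,\infty);L^\infty_{\beta-1}(H^{l-1}))$ and the identity for $\partial_t f$ will then follow automatically from the strong continuity of $e^{tB}$ and the mild formula together with the bound $\xi\cdot\nabla_x f\in L^\infty_\alpha(L^\infty_{\beta-1}(H^{l-1}))$.

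The first input is the decay estimate for the linear semigroup, built from the analysis of $\B$ and $\BB$ sketched earlier in the introduction: through the Fourier transform in $x$, the resolvent identity decomposing $(\lambda I-\B)^{-1}$ via $(\lambda I-\BB)^{-1}$ and the operator $(I-P(\lambda I-\BB)^{-1}P)^{-1}$ on $\Ker L$, the inversion formula of the semigroup, and integration by parts in $y$, one obtains
\begin{align*}
\|e^{tB}f_0\|_{L^\infty_\beta(H^l)} \le C(1+t)^{-\alpha}\Big(\|f_0\|_{L^\infty_{\beta+\alpha\gamma}(H^l)} + \|f_0\|_{L^2_{\alpha\gamma}(L^p)}\Big).
\end{align*}
The conditions $\alpha<\frac{d}{4}(\frac{2}{p}-1)$ and $\alpha<1$ are precisely what one needs to gain the rate $(1+t)^{-\alpha}$ from the $|y|$-integral near the origin (using the second asymptotics $\sigma_j(r)=-\sigma_j^{(2)}r^2+O(r^3)$ together with the $L^p\to L^2$ Hausdorff--Young step), while the weight shift $\alpha\gamma$ is the cost of integrating by parts against the multiplicative part $-\nu$ of $L$ to extract a time factor.

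The second input is a bilinear estimate for the nonlinear collision term; expanding $\Gamma(f,g)=\M^{-1/2}Q(\M^{1/2}f,\M^{1/2}g)$ in gain and loss parts and using the Gaussian weight hidden in $\M^{1/2}$ to absorb derivatives in $\xi$, one gets a gain of one $\nu$-factor,
\begin{align*}
\|\Gamma(f,g)\|_{L^\infty_{\beta+\gamma}(H^l)} \le C\|f\|_{L^\infty_\beta(H^l)}\|g\|_{L^\infty_\beta(H^l)},
\end{align*}
together with the matching $L^2_{\alpha\gamma}(L^p)$-version obtained by Sobolev embedding in $x$ and the Gaussian decay in $\xi$. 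Plugging into the Duhamel formula and using the semigroup estimate on the source $\Gamma(f,f)$ for $f\in X$ yields
\begin{align*}
\|\Phi(f)(t)\|_{L^\infty_\beta(H^l)} \le C(1+t)^{-\alpha}A_0 + CA_1^2\int_0^t (1+t-s)^{-\alpha}(1+s)^{-2\alpha}\, ds.
\end{align*}
The hypothesis $\alpha\ge \tfrac12$ makes $2\alpha\ge 1$, so the convolution is bounded by $C(1+t)^{-\alpha}$, giving $\|\Phi(f)\|_{L^\infty_\alpha(L^\infty_\beta(H^l))}\le C(A_0 + A_1^2)$. Choosing $A_1=2CA_0$ and $A_0$ small forces $\Phi(X)\subset X$; the same bilinear structure applied to $\Phi(f)-\Phi(g)=\int_0^t e^{(t-s)B}\bigl(\Gamma(f-g,f)+\Gamma(g,f-g)\bigr)\, ds$ yields the contraction factor $CA_1<1$ in the same norm, and uniqueness inside $X$ follows.

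The main obstacle is the weight bookkeeping: the semigroup estimate consumes $\alpha\gamma$ powers of the velocity weight and an extra $L^p$-type integrability in $x$ on the driving term, so one must check that $\Gamma(f,f)$ for $f\in X$ lands in $L^\infty_{\beta+\alpha\gamma}(H^l)\cap L^2_{\alpha\gamma}(L^p)$. This is exactly where the very-soft regime $\gamma\in[0,d)$ is delicate, because the gain in the bilinear estimate is only $\gamma$ while the loss is $\alpha\gamma$; the assumption $\alpha<1$ is what ensures a net gain, and the dimensional condition $d\ge 3$, together with $\frac{d}{4}(\frac{2}{p}-1)>\frac12$ and $l>\frac{d}{2}$, is what makes the $L^p$-part close via Sobolev embedding in $x$ and the Gaussian weight. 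Once those weighted bilinear estimates are in place, the contraction argument and the time-regularity claims are routine.
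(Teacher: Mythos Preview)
Your proposal is correct and follows the same route as the paper: a contraction on the Duhamel formula $\Phi[f]=e^{tB}f_0+\int_0^t e^{(t-s)B}\Gamma(f,f)\,ds$, using the semigroup decay bound and the bilinear estimates on $\Gamma$ (the paper's Lemma~\ref{V_lemma}, with the $L^2_{\alpha\gamma}(L^1)$ bound for the low-frequency piece rather than general $L^p$), closed by the convolution inequality for $\int_0^t(1+t-s)^{-\alpha}(1+s)^{-2\alpha}\,ds$. One small correction: that convolution bound requires $2\alpha>1$ strictly, not $2\alpha\ge 1$, so the endpoint $\alpha=\tfrac12$ would pick up a logarithm---the paper's proof in fact quietly restricts to $\alpha\in(\tfrac12,1)$ at this step.
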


Finally, we present the main strategy of analysis in this paper.
In section 2, we prove the estimate on $K$ when $\gamma \in [0,d)$.
Section 3 presents some boundedness and invertibility result on resolvents of $\B$, $\BB$. In section 4, we will analyze the singular behavior of $(\lambda I - \B)^{-1}$ near $y=0$. Section 4 and 5 give the main estimate on our semigroup $e^{t\B}$ and $e^{tB}$ as well as the global existence result.
In appendix, we list some basic theorem on semigroup theory and linearized collision operator $L$.
Also we extend two useful results on the Hilbert-Schmidt operator and interpolation theory in order to make our arguments valid.

\section{Properties of the Linearized Collision Operator}

In this section, we should firstly list some properties of the linearized collision operator $L$ and derive the new estimate on operator $K$.
Suppose the collision kernel satisfies the Grad's angular cut-off assumption:
\begin{align}
  q(\xi-\xi_*,\theta) = |\xi-\xi_*|^{-\gamma} b(\cos\theta),
\end{align}
where $|b(\cos\theta)|\le q_0|\cos\theta|$, for some constant $q_0>0$.

Denote $\P_{\xi_*-\xi}$ to be the hyperplane in $\Rd$ which is orthogonal to the vector $\xi_*-\xi$ and contains the origin. Denote
\begin{align*}
  a &= \frac{\xi+\xi_*}{2} - \left(\frac{\xi+\xi_*}{2}\cdot\frac{\xi_*-\xi}{|\xi_*-\xi|}\right)\frac{\xi_*-\xi}{|\xi_*-\xi|},\\
  b &= \left(\frac{\xi+\xi_*}{2}\cdot\frac{\xi_*-\xi}{|\xi_*-\xi|}\right)\frac{\xi_*-\xi}{|\xi_*-\xi|},
\end{align*}
where the vector $a$ is the projection of $\dfrac{\xi+\xi_*}{2}$ onto the hyperplane $\P_{\xi_*-\xi}$,
while $b$ is its projection onto the direction $\xi_*-\xi$.

The following theorem shows the basic properties of linearized collision operator $L$.
Here I will only give proof of estimate \eqref{I_eq412}, \eqref{I_eq413} and \eqref{I_eq214}, since the other estimate has been well studied in \cite{Caflisch1980} and I will put them in appendix for the sake of completeness.
\begin{Thm}\label{I_ThmL1}(Properties of $L$). Assume $\gamma\in [0,d)$,
  The linear operator $L$ has expression
  \begin{align*}
    Lf = Kf - \nu f.
  \end{align*}

  (1). Here $\nu(\xi)$ is a real positive function defined by
  \begin{align*}
  	\nu(\xi) = \int_\Rd\int_{\S^{d-1}}\M^{1/2}_*q(\xi-\xi_*,\theta)\,d\omega d\xi_*.
  \end{align*}
  For $\gamma\in[0,d)$, there exist constants $\nu_0,\,\nu_1>0$ depending on $\gamma,q,d$ such that
  \begin{align*}
    \nu_0(1+|\xi|)^{-\gamma} \le \nu(\xi) \le \nu_1(1+|\xi|)^{-\gamma}.
  \end{align*}

  (2). The operator $K$ is linear continuous operator on $L^2_\beta(\Rd)$ defined by 
  \begin{align*}
    Kf(\xi) = \int_{\Rd} k(\xi,\xi_*)f(\xi_*)\,d\xi_*.
  \end{align*}
  The kernel $k$ can be divided as $k(\xi,\xi_*) = k_1(\xi,\xi_*) + k_2(\xi,\xi_*)$, with
  \begin{align*}
    k_1(\xi,\xi_*) &= \frac{1}{|\xi_*-\xi|^{d-1}}\int_{\P_{\xi_*-\xi}}(2\pi)^{-d/2}e^{-\frac{|x|^2}{2}} q(x-a+\xi_*-\xi,\theta)\,dx \exp\left(-\frac{|b|^2}{2}-\frac{|\xi_*-\xi|^2}{8}\right),\\
    k_2(\xi,\xi_*) &= -\int_{S^{d-1}} \M^{1/2}(\xi_*)\M^{1/2}(\xi)q(\xi_*-\xi,\theta)\,d\omega.
  \end{align*}
  Here $k_1,\,k_2$ are symmetric functions. For $0<\varepsilon<1$, they satisfy:
  \begin{align}\label{I_eq411}
    |k_1(\xi,\xi_*)| &\le C_{\gamma,\varepsilon,d,q} \frac{1}{|\xi_*-\xi|^{d-2}(1+|\xi|+|\xi_*|)^{\gamma+1}}\exp\left(-\left(1-\varepsilon\right)
    \left(\dfrac{|b|^2}{2}+\dfrac{|\xi_*-\xi|^2}{8}\right)\right),\\\label{I_eq412}
    |k_2(\xi,\xi_*)| &\le C_{\gamma,\varepsilon,d,q} \frac{1}{|\xi_*-\xi|^\gamma(1+|\xi|+|\xi_*|)^{\gamma+1}}\exp\left(-(1-\varepsilon)
    \frac{|\xi_*|^2+|\xi|^2}{4}\right).
  \end{align}
  Consequently, for $p\in[1,\min\{\frac{d}{d-2}, \frac{d}{\gamma}\})$, $\beta\in\R$, we have
  \begin{align}\label{I_eq413}
    |k(\xi,\xi_*)| \le C_{\gamma,\varepsilon,d,q} \left(\frac{1}{|\xi_*-\xi|^{d-2}}+  \frac{1}{|\xi_*-\xi|^{\gamma}} \right) \frac{\exp \left(-(1-\varepsilon) (\frac{|b|^2}{2}+\frac{|\xi_*-\xi|^2}{8})\right)}{(1+|\xi|+|\xi_*|)^{\gamma+1}}
    ,
    \end{align}\begin{align}\label{I_eq214}
  \int_{\Rd}(1+|\xi_*|)^\beta|k(\xi,\xi_*)|^p\,d\xi_*
  \le C_{\gamma,\varepsilon,d,q}\frac{1}{(1+|\xi|)^{-\beta+p(\gamma+1)+1}}.
\end{align}
\end{Thm}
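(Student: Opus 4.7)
The plan is to treat the three estimates in increasing order of difficulty, taking for granted the defining formula for $k_2$ and the already-established bound \eqref{I_eq411} on $k_1$ from Caflisch.

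For \eqref{I_eq412} I would start directly from the integral expression for $k_2$, factor out $\M^{1/2}(\xi)\M^{1/2}(\xi_*) = (2\pi)^{-d/2} e^{-(|\xi|^2+|\xi_*|^2)/4}$ and $|\xi_*-\xi|^{-\gamma}$, and bound the remaining angular integral using $|b(\cos\theta)|\le q_0|\cos\theta|$ together with the elementary fact that $\int_{S^{d-1}}|\omega\cdot v/|v||\,d\omega$ is a finite constant independent of $v\ne 0$. To match the claimed form I would split $e^{-(|\xi|^2+|\xi_*|^2)/4} = e^{-(1-\varepsilon)(|\xi|^2+|\xi_*|^2)/4}\cdot e^{-\varepsilon(|\xi|^2+|\xi_*|^2)/4}$ and use $e^{-\varepsilon t^2/4}\le C_{\varepsilon,s}(1+t)^{-s}$ with $t=|\xi|+|\xi_*|$ and $s=\gamma+1$ to convert the residual exponential into the polynomial weight $(1+|\xi|+|\xi_*|)^{-(\gamma+1)}$.

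For \eqref{I_eq413} the point is that $a\perp b$ with $a+b=(\xi+\xi_*)/2$, which combined with the parallelogram identity $|\xi+\xi_*|^2+|\xi-\xi_*|^2 = 2(|\xi|^2+|\xi_*|^2)$ gives
\[
\frac{|b|^2}{2}+\frac{|\xi_*-\xi|^2}{8} \le \frac{|\xi|^2+|\xi_*|^2}{4}.
\]
Thus the $k_2$-exponential from \eqref{I_eq412} is dominated by the $k_1$-exponential from \eqref{I_eq411}, and summing the two pointwise bounds yields \eqref{I_eq413} with the common Gaussian factor, the common weight $(1+|\xi|+|\xi_*|)^{-(\gamma+1)}$, and the sum of the singular prefactors $|\xi_*-\xi|^{-(d-2)}+|\xi_*-\xi|^{-\gamma}$.

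Estimate \eqref{I_eq214} is the main work. I would insert \eqref{I_eq413}, absorb the power $p$ into a slightly smaller $1-\varepsilon$, and change variables to $u=\xi_*-\xi$ and then to polar coordinates $u=r\hat u$ with Jacobian $r^{d-1}$. The outer weight $(1+|\xi_*|)^\beta$ is dominated by $C(1+|\xi|)^\beta(1+r)^{|\beta|}$ after a case split on whether $r\le |\xi|/2$; the singularity at $r=0$ is integrable against $r^{d-1}$ precisely under the hypotheses $p<d/(d-2)$ and $p<d/\gamma$; and on the effective Gaussian support the factor $(1+|\xi|+|\xi_*|)^{-p(\gamma+1)}$ contributes $(1+|\xi|)^{-p(\gamma+1)}$.

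The hard part, and the source of the advertised extra $(1+|\xi|)^{-1}$ gain, is the angular integration. In polar coordinates $|b| = |\hat u\cdot\xi + r/2|$, so it suffices to prove
\[
\int_{S^{d-1}} e^{-c(\hat u\cdot\xi+r/2)^2}\,d\hat u \le \frac{C}{1+|\xi|},
\]
uniformly in $r\ge 0$. Choosing the polar axis along $\xi/|\xi|$ and writing $\hat u\cdot\xi=|\xi|\cos\phi$, $d\hat u=\sin^{d-2}\phi\,d\phi\,d\hat v$, the substitution $s=|\xi|\cos\phi+r/2$ produces $ds=-|\xi|\sin\phi\,d\phi$; the residual factor $(1-((s-r/2)/|\xi|)^2)^{(d-3)/2}$ is bounded by $1$ for $d\ge 3$, and the Gaussian in $s$ integrates to an $r$-independent constant, giving the $1/|\xi|$ bound for $|\xi|\ge 1$, while the bound is trivially $O(1)$ for $|\xi|\le 1$. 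Multiplying this angular gain by the $r$-integral and the polynomial prefactors delivers exactly $(1+|\xi|)^{\beta-p(\gamma+1)-1}$.
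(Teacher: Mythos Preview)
Your proposal is correct and follows essentially the same route as the paper. The only difference is organizational: the paper packages the angular-integration step yielding the extra $(1+|\xi|)^{-1}$ as a standalone lemma (Lemma~\ref{I_lemma1}(1),(3) in the appendix), proved via exactly the spherical-to-one-dimensional substitution you describe, and then applies it as a black box to obtain \eqref{I_eq214}; your argument for \eqref{I_eq412} and \eqref{I_eq413} matches the paper's verbatim.
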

\begin{proof}

1. The expression of $K$ can be found in appendix.
For any $\varepsilon\in(0,1)$,
\begin{align*}
  |k_2(\xi,\xi_*)| &= \left|\int_{S^{d-1}} \M^{1/2}(\xi_*)\M^{1/2}(\xi)q(\xi_*-\xi,\theta)\,d\omega\right|\\
  &= (2\pi)^{-d/2}\exp(-\frac{|\xi_*|^2+|\xi|^2}{4})|\xi_*-\xi|^{-\gamma} \int_{S^{d-1}}b(\cos\theta)\,d\omega\\
  &= C_{d,q_0,\varepsilon}\exp(-(1-\varepsilon)\frac{|\xi_*|^2+|\xi|^2}{4})|\xi_*-\xi|^{-\gamma}(1+|\xi|+|\xi_*|)^{-\gamma-1}\\
  &\qquad\qquad\times\sup_{\xi,\xi_*\in\Rd}\exp(-\frac{\varepsilon(|\xi_*|^2+|\xi|^2)}{4})(1+|\xi|+|\xi_*|)^{\gamma+1}\\
  &= C_{d,\gamma,q_0,\varepsilon}\exp\left(-(1-\varepsilon)\frac{|\xi_*|^2+|\xi|^2}{4}\right)|\xi_*-\xi|^{-\gamma}(1+|\xi|+|\xi_*|)^{-\gamma-1}.
\end{align*}
This proves \eqref{I_eq412}.

2. Once we get the estimate \eqref{I_eq411} and \eqref{I_eq412}, by using a trivial inequality that
\begin{align*}
  \frac{|\xi_*|^2+|\xi|^2}{4}-\frac{|\xi_*-\xi|^2}{8} = \frac{|\xi_*+\xi|^2}{8} \ge \frac{|b|^2}{2},
\end{align*}we have
\begin{align*}
  |k(\xi,\xi_*)| &\le C_{\gamma,\varepsilon,d,q}\left(\frac{1}{|\xi_*-\xi|^{d-2}}+\frac{1}{|\xi_*-\xi|^\gamma}\right) \frac{\exp\left(-(1-\varepsilon)(\frac{|b|^2}{2}+\frac{|\xi_*-\xi|^2}{8})\right)}{(1+|\xi|+|\xi_*|)^{\gamma+1}}
  .
\end{align*}
Therefore, by \eqref{I_lemma13} in lemma \ref{I_lemma1}, for $p\in[1,\min\{\frac{d}{d-2}, \frac{d}{\gamma}\})$,
\begin{align*}
  \int_{\Rd}(1+|\xi_*|)^\beta|k(\xi,\xi_*)|^p\,d\xi_*
  &\le C_{\gamma,\varepsilon,d,q}\int_{\Rd}\left(\frac{1}{|\xi_*-\xi|^{p(d-2)}}+\frac{1}{|\xi_*-\xi|^{p\gamma}}\right) \frac{(1+|\xi_*|)^{\beta}}{(1+|\xi|+|\xi_*|)^{p(\gamma+1)}}\\
  &\qquad\qquad\qquad\qquad\qquad\times\exp\big(-p(1-\varepsilon)\big(\frac{|b|^2}{2}+\frac{|\xi_*-\xi|^2}{8}\big)\big)\,d\xi_*\\
  &\le C_{\gamma,\varepsilon,d,q}\frac{1}{(1+|\xi|)^{-\beta+p(\gamma+1)+1}}.
\end{align*}This completes the proof.
\qe\end{proof}

\begin{Rem}
  Similar estimates are valid for $\gamma\in[-1,0]$, which is the case of hard potential. The only difference is that for hard potential, the term $\frac{1}{|\xi-\xi_*|^\gamma}$ in \eqref{I_eq412} is not singular any more.
\end{Rem}

\begin{Thm}\label{I_proerties_K}(Properties of $K$).
  Let $d\ge 3$ be the dimension, $\gamma \in [0,d)$, $\beta\in\R$.
  Then the followings are valid.

  (1). For $p\in[1,\infty]$,
  \begin{align}
    \|Kf\|_{L^p_{\beta+\gamma+2}}\le C_{\gamma,d,q,p}\|f\|_{L^p_\beta}.
  \end{align}

  (2). The linear operator $K:L^2_\alpha\to L^2_{\beta+\gamma+2}$ is compact for $\alpha>\beta$.

(3). Let $p>\max(\frac{d}{d-\gamma},\frac{d}{2})$. Then
\begin{align}
  \|Kf\|_{L^\infty_{\beta+\gamma+2-1/p}}\le C_{\gamma,d,q}\|f\|_{L^p_\beta}.
\end{align}

(4). Pick $p_0>\max(\frac{d}{d-\gamma},\frac{d}{2})$. For $\theta\in(0,1)$,
$K$ is a linear bounded operator from $L^{p_\theta}$ to $L^{q_\theta}$ with estimate
\begin{align}\label{I_eq333}
  \|Kf\|_{L^{q_\theta}_{\beta+\gamma+1}}\le C_{\gamma,d,q,p,\theta}\|f\|_{L^{p_\theta}_\beta},
\end{align}
where
\begin{align}\label{I_eqpq}
    \frac{1}{q_\theta} = \frac{\theta}{\infty} + \frac{1-\theta}{1},
    \quad\frac{1}{p_\theta} = \frac{\theta}{p_0} + \frac{1-\theta}{1}.
\end{align}
Consequently, if $f$ lies in the space $L^p_\beta(H^l)$ or $L^{p_\theta}_\beta(H^l)$, then we have the following estimate respectively.
\begin{align*}
  \|Kf\|_{L^\infty_{\beta+\gamma+2-1/p}(H^l)}\le C_{\gamma,d,q}\|f\|_{L^p_\beta(H^l)},\\
  \|Kf\|_{L^{q_\theta}_{\beta+\gamma+1}(H^l)}\le C_{\gamma,d,q,p_0,\theta}\|f\|_{L^{p_\theta}_\beta(H^l)}.
\end{align*}

\end{Thm}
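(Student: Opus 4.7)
The plan is to derive the four assertions from the pointwise kernel estimate \eqref{I_eq214} together with Schur's test, Hölder's inequality, and Riesz--Thorin interpolation, while the compactness statement will require an extra kernel-truncation argument.

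For the $L^p_\beta \to L^p_{\beta+\gamma+2}$ bound of (1), I would conjugate $K$ by the weights and study the effective kernel
\begin{align*}
\tilde k(\xi,\xi_*)=(1+|\xi|)^{\beta+\gamma+2}\,k(\xi,\xi_*)\,(1+|\xi_*|)^{-\beta}
\end{align*}
as an operator on unweighted $L^p(\mathbf{R}^d)$. Applying Schur's test, both Schur integrals reduce to \eqref{I_eq214} at $p=1$: one direction uses \eqref{I_eq214} directly with exponent $-\beta$, while the other direction uses the symmetry $k(\xi,\xi_*)=k(\xi_*,\xi)$ to swap the roles of the two variables. In each case the bound is a constant uniform in $\xi$ or $\xi_*$, which gives (1) for all $p\in[1,\infty]$ simultaneously.

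For (3) I apply Hölder's inequality in the representation $Kf(\xi)=\int k(\xi,\xi_*)f(\xi_*)\,d\xi_*$ with conjugate exponent $p'$. The assumption $p>\max\bigl(\tfrac{d}{d-\gamma},\tfrac{d}{2}\bigr)$ is precisely $p'<\min\bigl(\tfrac{d}{\gamma},\tfrac{d}{d-2}\bigr)$, so \eqref{I_eq214} applies at exponent $p'$ and weight $-\beta p'$, producing pointwise decay of $Kf$ at rate $(1+|\xi|)^{-(\beta+\gamma+2-1/p)}$. Statement (4) then follows by Riesz--Thorin interpolation between the endpoints already established: the $L^1_\beta\to L^1_{\beta+\gamma+2}$ bound from (1) at $\theta=0$, and the $L^{p_0}_\beta\to L^\infty_{\beta+\gamma+2-1/p_0}$ bound from (3) at $\theta=1$. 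The interpolated weight $\beta+\gamma+2-\theta/p_0$ dominates $\beta+\gamma+1$ for every $\theta\in(0,1)$ since $p_0>1$, so one may safely pass to the slightly smaller weight $\beta+\gamma+1$ asserted in \eqref{I_eq333}.

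The main obstacle is compactness in (2), where one has to extract genuine compactness rather than mere boundedness. I would truncate the kernel by
\begin{align*}
k_R(\xi,\xi_*) = k(\xi,\xi_*)\,\mathbf{1}_{\{|\xi|\le R,\ |\xi_*|\le R,\ |\xi-\xi_*|\ge 1/R\}},
\end{align*}
so that the associated weighted kernel $(1+|\xi|)^{\beta+\gamma+2}\,k_R(\xi,\xi_*)\,(1+|\xi_*|)^{-\alpha}$ is bounded and compactly supported, making $K_R$ a Hilbert--Schmidt, hence compact, operator from $L^2_\alpha$ to $L^2_{\beta+\gamma+2}$. To close the argument I would estimate $K-K_R$ in operator norm by Schur's test again: the strict inequality $\alpha>\beta$ supplies an extra decay factor $(1+|\cdot|)^{\beta-\alpha}\to 0$ on the complement $\{|\xi|>R\}\cup\{|\xi_*|>R\}$, while the Gaussian factor in \eqref{I_eq413} and the integrability of the local singularities $|\xi-\xi_*|^{-(d-2)}$ and $|\xi-\xi_*|^{-\gamma}$ make the contribution of $\{|\xi-\xi_*|<1/R\}$ tend to zero as $R\to\infty$. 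Thus $K$ is the operator-norm limit of compact operators, and (2) follows.
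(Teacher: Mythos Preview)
Your plan is correct and mirrors the paper's proof closely: part (1) via Schur's test is just a repackaging of the paper's explicit H\"older computation (both rest on \eqref{I_eq214} and the symmetry of $k$), parts (2)--(3) are essentially identical to the paper, and part (4) is the same Riesz--Thorin interpolation. One small point: the ``interpolated weight $\beta+\gamma+2-\theta/p_0$'' does not come out of plain Riesz--Thorin, since the output weights differ at the two endpoints; the paper avoids this by first weakening \emph{both} endpoints to the common output weight $\beta+\gamma+1$ and only then interpolating the conjugated (unweighted) operator, which you should do as well.
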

\begin{Rem}
  All the estimates are true on $\int_\Rd|k(\xi,\xi_*)|f(\xi_*)\,d\xi_*$ instead of $\int_\Rd k(\xi,\xi_*)f(\xi_*)\,d\xi_*$ as well. This property is useful in some special situation.
\end{Rem}

\begin{proof}
  1. Let $r\in\R$ to be arbitrary, $\beta\in\R$, $p\in[1,\infty)$. Applying H\"older's inequality, \eqref{I_eq214} and noticing $1/p+1/p'=1$, we have
  \begin{align*}
    |Kf(\xi)| &\le \int_\Rd|k(\xi,\xi_*)|(1+|\xi_*|)^{-r}(1+|\xi_*|)^r|f(\xi_*)|\,d\xi_*\\
    &\le \left(\int_\Rd |k(\xi,\xi_*)|(1+|\xi_*|)^{-rp'}\,d\xi_*\right)^{1/p'}
    \left(\int_\Rd |k(\xi,\xi_*)|(1+|\xi_*|)^{rp}|f(\xi_*)|^p\,d\xi_*\right)^{1/p}\\
    &= \frac{C_{\gamma,d,q,r,p}}{(1+|\xi|)^{r+(\gamma+2)(p-1)/p}}
    \left(\int_\Rd |k(\xi,\xi_*)|(1+|\xi_*|)^{rp}|f(\xi_*)|^p\,d\xi_*\right)^{1/p}.
  \end{align*}
  Thus using \eqref{I_eq214} again,
  \begin{align*}
    \|Kf\|^p_{L^p_\beta(\Rd)}
    &\le C_{\gamma,d,q,r,p}\int_\Rd\left(\int_\Rd(1+|\xi|)^{p\beta-pr-(\gamma+2)(p-1)} |k(\xi,\xi_*)|\,d\xi\right) (1+|\xi_*|)^{rp}|f(\xi_*)|^p\, d\xi_*\\
    &\le C_{\gamma,d,q,r,p}\|f\|^p_{L^p_{\beta-\gamma-2}}.
  \end{align*}

  If $p=\infty$, then
\begin{align*}
  \|Kf\|_{L^\infty_\beta}
  &\le \sup_{\xi\in\Rd}(1+|\xi|)^\beta\int_\Rd (1+|\xi_*|)^{\gamma+2-\beta}|k(\xi,\xi_*)|\,d\xi_*\ \|f\|_{L^\infty_{\beta-\gamma-2}}
  \le C_{d,\gamma,q,p}\|f\|_{L^\infty_{\beta-\gamma-2}}.
\end{align*}

2. Now we prove that $K:L^2_\alpha\to L^2_{\beta}$ is compact for $\alpha>\beta-\gamma-2$. Similar to the estimates in step 1, for  $R>1/2>\varepsilon$, $r\ge 0$, we have
\begin{align*}
  |K(\chi_{|\xi-\,\cdot\,|\le\varepsilon}\chi_{|\cdot|\ge R}f)(\xi)| &\le \frac{C_{\gamma,d,q,r,p}}{(1+|\xi|)^{r+(\gamma+2)(p-1)/p}}
  \Big(
\int_{
  \substack{|\xi_*|\ge R \\ |\xi-\xi_*|\le \varepsilon}
}
|k(\xi,\xi_*)|(1+|\xi_*|)^{rp}|f(\xi_*)|^p\,d\xi_*\Big)^{\frac{1}{p}}.
\end{align*}
We claim that $K$ is the limit of compact operators $K(\chi_{|\xi-\,\cdot\,|\le\varepsilon}\chi_{|\cdot|\ge R})$ under the operator norm $\|\cdot\|_{L^p_\alpha\to L^p_{\beta}}$. The term $\chi_{|\xi-\,\cdot\,|\le\varepsilon}$ is used to eliminate the singularity of $k(\xi,\xi_*)$ near $\xi=\xi_*$.
Pick $r\in\R$ and notice that
$|\xi-\xi_*|\le\varepsilon<1/2$ implies $\frac{1}{2}(1+|\xi_*|)\le (1+|\xi|)\le \frac{3}{2}(1+|\xi_*|)$. Thus by \eqref{I_eq413},
\begin{align*}
  &\|K(\chi_{|\xi-\,\cdot\,|\ge\varepsilon}\chi_{|\cdot|\le R}f)(\xi)-K(\chi_{|\cdot|\le R}f)(\xi)\|^p_{L^p_{\beta}}\\
  &\le
  C_{\gamma,d,q,r,p}
  \int_{|\xi_*|\le R}
  \Big(\int_\Rd(1+|\xi|)^{p\beta-pr-(\gamma+2)(p-1)}|k(\xi,\xi_*)|
  \chi_{|\xi-\xi_*|\le\varepsilon}\,d\xi\Big)
  (1+|\xi_*|)^{rp}|f(\xi_*)|^p\,d\xi_*\\
  &\le C_{\gamma,d,q,r,p,\beta}
  \int_{|\xi_*|\le R}
  \Big(\int_{|\xi-\xi_*|\le\varepsilon}
  \Big(\frac{1}{|\xi-\xi_*|^\gamma}+\frac{1}{|\xi-\xi_*|^{d-2}}\Big)
  \,d\xi\Big)
  (1+|\xi_*|)^{p\beta-(\gamma+2)p+1}|f(\xi_*)|^p\,d\xi_*\\
  &\le C_{\gamma,d,q,r,p,\beta}
  \int_{|\xi|\le\varepsilon}
  \left(\frac{1}{|\xi|^\gamma}+\frac{1}{|\xi|^{d-2}}\right)
  \,d\xi
  \min\{1,\,(1+R)^{p\beta-(\gamma+2)p+1-p\alpha}\}\|f\|^p_{L^p_\alpha}\\
  &\to 0,
\end{align*}
as $\varepsilon\to 0$, for any fixed $R>1/2$. On the other hand,
\begin{align*}
  &\|K(\chi_{|\cdot|\le R}f)(\xi)-Kf(\xi)\|^p_{L^p_{\beta}}\\
  &\le C_{\gamma,d,q,r,p}\int_\Rd\left(\int_\Rd(1+|\xi|)^{p\beta-pr-(\gamma+2)(p-1)}|k(\xi,\xi_*)|
  \,d\xi\right)
  (1+|\xi_*|)^{rp}|f(\xi_*)|^p\chi_{|\xi_*|\ge R}\,d\xi_*\\
  &\le C_{\gamma,d,q,r,p}\int_\Rd(1+|\xi_*|)^{p\beta-(\gamma+2)p}|f(\xi_*)|^p\chi_{|\xi_*|\ge R}\,d\xi_*\\
  &=C_{\gamma,d,q,r,p}(1+R)^{p\beta-(\gamma+2)p-p\alpha}\|f\|^p_{L^p_{\alpha}},
\end{align*}provided $\beta-(\gamma+2)-\alpha\le 0$.
Thus
\begin{align*}
  \|K(\chi_{|\cdot|\ge R}f)(\xi)\|_{L^p_{\beta}}\le
  C_{\gamma,d,q,r,p}(1+R)^{\beta-(\gamma+2)-\alpha}\|f\|_{L^p_{\alpha}}\to 0,
\end{align*}if $\beta-(\gamma+2)-\alpha<0$.
This proves that $K$ can be approximated by $K(\chi_{|\xi-\,\cdot\,|\le\varepsilon}\chi_{|\cdot|\ge R})$ under the operator norm $\|\cdot\|_{L^p_\alpha\to L^p_{\beta}}$.

Let $p=2$, it remians to show that $K\chi_{|\xi-\,\cdot\,|\ge\varepsilon}\chi_{|\cdot|\le R}$ is compact in $L(L^2_{\alpha},L^2_{\beta})$ when $\beta-(\gamma+2)-\alpha<0$.
By \ref{A_Hilbert_Schmidt} in appendix, it suffices to prove that $K\chi_{|\xi-\,\cdot\,|\ge\varepsilon}\chi_{|\cdot|\le R}$ is a Hilbert-Schimidt operator.
That is to show that  $k(\xi,\xi_*)\chi_{|\xi-\xi_*|\ge\varepsilon}\chi_{|\xi_*|\le R}\in L^2(\Rd\times\Rd,(1+|\xi_*|)^{2\alpha}\,d\xi_*\otimes(1+|\xi|)^{2\beta}\,d\xi)$.
\begin{align*}
  &\int_\Rd\int_\Rd|k(\xi,\xi_*)|^2\chi_{|\xi-\xi_*|\ge\varepsilon}\chi_{|\xi_*|\le R}(1+|\xi|)^{2\beta}\,d\xi(1+|\xi_*|)^{2\alpha}\,d\xi_*\\
  &\le
  C_{\gamma,\varepsilon,d,q}
  \int_{|\xi_*|\le R}\int_{|\xi-\xi_*|\ge\varepsilon}
  \left(\frac{1}{|\xi-\xi_*|^\gamma}+\frac{1}{|\xi-\xi_*|^{d-2}}\right)^2
  \frac{(1+|\xi|)^{2\beta}(1+|\xi_*|)^{2\alpha}}{(1+|\xi|+|\xi_*|)^{2\gamma+2}}\\
  &\qquad\qquad\qquad\qquad\qquad\qquad\times\exp\left(-2(1-\varepsilon)\left(|b|^2/2+|\xi_*-\xi|^2/8\right)\right)\,d\xi\,d\xi_*\\
  &\le   C_{\gamma,\varepsilon,d,q}\left(\frac{1}{\varepsilon^\gamma}+\frac{1}{\varepsilon^{d-2}}\right)^2
  \int_{|\xi_*|\le R}(1+|\xi_*|)^{2\alpha+2\beta-2\gamma-3}\,d\xi_*\\
  &<\infty.
\end{align*}
This shows that $K\chi_{|\xi-\,\cdot\,|\ge\varepsilon}\chi_{|\cdot|\le R}:L^2_\alpha\to L^2_{\beta+\gamma+2}$ is compact for $\alpha>\beta$, and then
$K:L^2_\alpha\to L^2_{\beta+\gamma+2}$ is compact.

3. For $f\in L^p_\beta$, $\beta\in\R$, $p\in (1,\infty)$,
\begin{align*}
  |Kf(\xi)|&\le \int_\Rd|k(\xi,\xi_*)|\,|f(\xi_*)|\,d\xi_*\\
  &\le \left(\int_\Rd|k(\xi,\xi_*)|^{p'}(1+|\xi_*|)^{-p'\beta })\,d\xi_* \right)^{1/p'}
  \left(\int_\Rd(1+|\xi_*|)^{p\beta}|f(\xi_*)|^p\,d\xi_*\right)^{1/p}\\
  &\le C_{\gamma,\varepsilon,d,q,p}
  \frac{1}{(1+|\xi|)^{(\beta+\gamma+1)+1/p'}}
  \|f\|_{L^p_\beta},
\end{align*}provided $p'\gamma<d$ and $(d-2)p'<d$, that is $p>\frac{d}{d-\gamma}$ and $p>\frac{d}{2}$. Thus for $p\in(\max(\frac{d}{d-\gamma},\frac{d}{2}),\infty)$, we have
\begin{align}\label{I_eq229}
  \|Kf\|_{L^\infty_{\beta+\gamma+1+1/p'}}\le C_{\gamma,d,q,p}\|f\|_{L^p_\beta}.
\end{align}

4. To prove (4), we only need a weaker result then \eqref{I_eq229}. That is for $p\in(\max(\frac{d}{d-\gamma},\frac{d}{2}),\infty)$, $\beta\in\R$,
\begin{align*}
  \|Kf\|_{L^\infty_{\beta+\gamma+1}}\le C_{\gamma,d,q,p}\|f\|_{L^p_\beta}.
\end{align*}
Also step 1 gives that for $\beta\in\R$, $\|Kf\|_{L^1_{\beta+\gamma+1}}\le C_{\gamma,d,q}\|f\|_{L^1_\beta}$.
Pick $p_0>\max(\frac{d}{d-\gamma},\frac{d}{2})$, $p_1=1$.
Applying Riesz-Thorin interpolation theorem to $p_0$ and $p_1$, we obtain that for $\theta\in(0,1)$,
$K$ is a linear bounded operator from $L^{p_\theta}$ to $L^{q_\theta}$ with
\begin{align*}
  \|Kf\|_{L^{q_\theta}_{\beta+\gamma+1}}\le C_{\gamma,d,q,p,\theta}\|f\|_{L^{p_\theta}_\beta},
\end{align*}
where $\frac{1}{q_\theta} = \frac{\theta}{\infty} + \frac{1-\theta}{1}$,
  $\frac{1}{p_\theta} = \frac{\theta}{p_0} + \frac{1-\theta}{1}$.
For the last assertion, it suffices to notice that if $f\in L^{P_\theta}_\beta(H^l)$,
\begin{align*}
  \|Kf\|_{H^l} \le \int_\Rd|k(\xi,\xi_*)|\,\|f(\cdot,\xi_*)\|_{H^l}\,d\xi_*.
\end{align*}
This completes the theorem.
\qe\end{proof}

The following theorem is well studied in many literature such as \cite{Cercignani1994,Ukai} and I will put the proof in appendix.
\begin{Thm}\label{I_ThmL2}(Properties of $L$). Assume $\gamma\in[0,d)$.
Then $L : L^2(\Rd) \to L^2(\Rd)$ is a linear unbounded operator satisfying the following properties.

  (a). $L$ is a self-adjoint non-positive linear operator on $L^2(\Rd)$.

  (b). $Ker L = Span\{\M^{1/2}, \xi_1\M^{1/2},  \dots,  \xi_d\M^{1/2}, |\xi|^2\M^{1/2}\}$.
\end{Thm}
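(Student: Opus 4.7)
The plan is to derive the classical symmetric weak form of the bilinear form $\langle Lf,g\rangle_{L^2}$ via two standard collisional changes of variables, from which self-adjointness, non-positivity, and the kernel characterization all follow. I observe at the outset that for $\gamma\in[0,d)$ we have $\nu\sim(1+|\xi|)^{-\gamma}$, so $\nu$ is bounded, and $K$ is bounded on $L^2$ by Theorem \ref{I_proerties_K}(1); hence $L=K-\nu$ is in fact a bounded operator on $L^2(\Rd)$, and the word ``unbounded'' in the statement is used only in its general functional-analytic sense.

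Step 1 (weak form). Taking $f=\M^{1/2}\phi$, $g=\M^{1/2}\psi$ with $\phi,\psi$ in a dense class (e.g.\ Schwartz), I would first verify directly from the definition that $Q$ is symmetric in its arguments, $Q(f,g)=Q(g,f)$, and that, using the microscopic conservation $\M\M_* = \M'\M'_*$,
\begin{equation*}
Q(\M\phi,\M)=\int_\Rd\int_{\S^{d-1}}\M\M_*(\phi'+\phi'_*-\phi-\phi_*)\,q\,d\omega\,d\xi_*.
\end{equation*}
Combined with symmetry this gives
\begin{equation*}
\langle Lf,g\rangle = 2\int_\Rd\!\int_\Rd\!\int_{\S^{d-1}}\M\M_*\,\psi\,(\phi'+\phi'_*-\phi-\phi_*)\,q\,d\omega\,d\xi_*\,d\xi.
\end{equation*}
I would then apply two averaging steps: the particle swap $\xi\leftrightarrow\xi_*$, which leaves $\M\M_* q$ and $(\phi'+\phi'_*-\phi-\phi_*)$ invariant while replacing $\psi$ by $\tfrac{1}{2}(\psi+\psi_*)$; and the pre/post involution $(\xi,\xi_*)\mapsto(\xi',\xi'_*)$ (unit Jacobian), which leaves $\M\M_* q$ invariant and flips the sign of $\phi'+\phi'_*-\phi-\phi_*$. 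Averaging once more yields the symmetric identity
\begin{equation*}
\langle Lf,g\rangle = -\tfrac{1}{2}\int_\Rd\!\int_\Rd\!\int_{\S^{d-1}}\M\M_*(\phi+\phi_*-\phi'-\phi'_*)(\psi+\psi_*-\psi'-\psi'_*)\,q\,d\omega\,d\xi_*\,d\xi.
\end{equation*}

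Step 2 (consequences). From this master identity, part (a) follows at once: the right-hand side is symmetric in $(\phi,\psi)$, so $\langle Lf,g\rangle = \langle f,Lg\rangle$ on a dense subspace and extends to all of $L^2$ by the $L^2$-boundedness of $L$, while $\phi=\psi$ gives $\langle Lf,f\rangle\le 0$. For (b), if $Lf=0$ then $\langle Lf,f\rangle=0$ forces $\phi(\xi)+\phi(\xi_*) = \phi(\xi')+\phi(\xi'_*)$ for a.e.\ $(\xi,\xi_*,\omega)$ (since $q$ is a.e.\ positive). Boltzmann's classical theorem on collision invariants---proved by varying $\omega\in\S^{d-1}$ and exploiting the smoothness in $\omega$---then forces $\phi(\xi) = a+b\cdot\xi+c|\xi|^2$ for constants $a,c\in\R$, $b\in\R^d$, so $f$ lies in $\mathrm{Span}\{\M^{1/2},\xi_1\M^{1/2},\dots,\xi_d\M^{1/2},|\xi|^2\M^{1/2}\}$. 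Conversely, any such $\phi$ is a collision invariant, hence $Lf=0$.

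The main obstacle is justifying the two changes of variables when $\gamma>0$, because $q(\xi-\xi_*,\theta)=|\xi-\xi_*|^{-\gamma}b(\cos\theta)$ has a diagonal singularity. I would handle this by a cut-off $q_\varepsilon := q\,\chi_{|\xi-\xi_*|\ge\varepsilon}$: for $q_\varepsilon$ every manipulation is trivially legal, and the symmetric identity for $q_\varepsilon$ passes to the limit $\varepsilon\to 0^+$ using the $L^2$-boundedness of the corresponding truncated operators (inherited from the kernel estimates in Theorem \ref{I_ThmL1} and Theorem \ref{I_proerties_K}) together with the Schwartz decay of $\phi,\psi$. Once the identity is established on a dense class the remaining assertions are purely algebraic.
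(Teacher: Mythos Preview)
Your proposal is correct and follows essentially the same route as the paper: derive the symmetric collisional identity for $\langle Lf,g\rangle$, read off non-positivity from the case $\phi=\psi$, and obtain the kernel via the classical characterization of collision invariants. The only cosmetic difference is that the paper gets self-adjointness more directly from the decomposition $L=-\nu+K$ (real bounded multiplier plus integral operator with symmetric kernel $k(\xi,\xi_*)$), whereas you deduce it from the symmetry of the master identity; and the paper simply quotes the weak-form identity as ``well-known'' without your $q_\varepsilon$ cutoff justification.
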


By orthogonal decomposition, we can decomposite $L^2(\Rd)$ by
    \begin{align*}
      L^2(\Rd) = \text{Ker}L \oplus (\text{Ker}L)^{\perp}.
    \end{align*}
  Denote $\varphi_0 = \M^{1/2}$, $\varphi_i=\xi_i\M^{1/2}$ ($i=1,\dots,d$), $\varphi_{d+1}=|\xi|^2\M^{1/2}$.
  Then we can define projection $P$ from $L^2(\Rd)$ onto $\text{Ker}L$ by
  \begin{align*}
    P f &:= \sum^{d+1}_{i=0}(f,\varphi_i)\varphi_i.
  \end{align*}

  \section{Estimate on the Linearized Boltzmann Operator}
  In this section, we will compute some basic estimate on operator $\B$, $\BB$, $\A$ as well as their resolvents. In order to make the subsequent arguments rigorous, we need to verify the existence of the resolvent in some specific space.

  Suppose $\gamma\in [0,d)$, $\beta\in\R$.
  Define
  \begin{align*}
    \widehat{B}(y) :&= -2\pi i y\cdot\xi + L,\\
      \BB :&= -2\pi i y\cdot\xi + L-P,\\
    \widehat{A}(y) :&= -2\pi i y\cdot\xi - \nu,
  \end{align*}
  with domain depending on $\beta$:
  \begin{align*}
    D_\beta := \{ f\in L^2_\beta(\Rd): y\cdot\xi f\in L^2_\beta(\Rd)\}.
  \end{align*}
  Also we define
  \begin{align*}
    K_0 = K-P.
  \end{align*}
  \begin{Rem}
  It's important to notice that the resolvent set and spectrum of these operators depend on the space, i.e $\beta$.
  \end{Rem}

  The following theorem gives some spectrum structures of operators $\B$ and $\BB$.
  \begin{Thm}\label{II_spectrum}Assume $\gamma\in[0,d)$, $\beta\in\R$, $y\in\Rd$.
  Then the following statements are valid.

    (1). $(\B,D_0)$ generates a contraction semigroup on $L^2(\R^d_\xi)$. Consequently,
    \begin{align}
      \rho(\B)\supset \{\text{Re}\lambda>0\}.\label{II_eq34}
    \end{align}

  (2). $(\B,D_\beta)$ generates a strongly continuous semigroup on $L^2_\beta$ with
  \begin{align}
    \|e^{t\B}\|_{L(L^2_\beta)}\le e^{t\|K\|_{L(L^2_\beta)}}.
  \end{align}

      (3). There are two cases about $\sigma_p(\B)\cap \{\Re\lambda=0\}$, where $\B$ is considered acting on $L^2$.
      \begin{align*}
        \sigma_p(\B)\cap \{\Re\lambda=0\}=\left\{
        \begin{aligned}
          &\emptyset, &&\text{  if  }y\neq 0,\\
          &\{0\}, &&\text{  if  }y= 0.
        \end{aligned}\right.
      \end{align*}

  (4). On $L^2$, for $y\in\Rd$,
        \begin{align*}
          \sigma_p(\B-P)\subset\{\text{Re}\lambda<0\}.
        \end{align*}
        
  \end{Thm}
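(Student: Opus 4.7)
My plan is to treat the four assertions in turn, leveraging Theorem \ref{I_ThmL2} (self-adjointness and non-positivity of $L$ on $L^2(\Rd)$, with the explicit description of $\Ker L$) together with the splitting $\B=\A+K$, in which $\A=-2\pi iy\cdot\xi-\nu$ is a multiplication operator in $\xi$ and $K\in\mathcal{L}(L^2_\beta)$ by Theorem \ref{I_proerties_K}(1).

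For (2) the strategy is a bounded perturbation. On $L^2_\beta$, $\A$ generates the contraction semigroup of multiplication by $e^{-t(2\pi iy\cdot\xi+\nu)}$, of operator norm $\le 1$ since $\nu\ge 0$. Because $K\in\mathcal{L}(L^2_\beta)$, the standard bounded perturbation theorem for $C_0$-semigroups (recalled in the appendix) produces the semigroup $e^{t\B}$ on $L^2_\beta$ with the stated bound $e^{t\|K\|_{\mathcal{L}(L^2_\beta)}}$. For (1), I would specialize to $\beta=0$ and strengthen this bound via self-adjointness: since $-2\pi iy\cdot\xi$ is skew-adjoint,
\begin{align*}
\tfrac{d}{dt}\|e^{t\B}f\|_{L^2}^2 = 2\Re(\B e^{t\B}f,e^{t\B}f) = 2(Le^{t\B}f,e^{t\B}f)\le 0,
\end{align*}
so $e^{t\B}$ is a contraction on $L^2$, and the inclusion \eqref{II_eq34} is then immediate from the Hille--Yosida resolvent bound $\|(\lambda I-\B)^{-1}\|\le 1/\Re\lambda$ for contraction semigroups.

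For (3) and (4) the key tool is an energy identity. Assume $\B f=\lambda f$ on $L^2$ with $\Re\lambda=0$ and $f\ne 0$; since $(y\cdot\xi f,f)\in\R$ and $(Lf,f)\in\R$, taking the real part of $(\B f,f)=\lambda\|f\|_{L^2}^2$ yields $(Lf,f)=0$, and self-adjointness plus non-positivity then force $f\in\Ker L$. Substituting $Lf=0$ back in the eigenvalue equation gives $(2\pi iy\cdot\xi+\lambda)f\equiv 0$ a.e.; if $y\ne 0$ this can only hold on the hyperplane $\{2\pi y\cdot\xi+\Im\lambda=0\}$ of zero measure, forcing $f=0$, a contradiction; if $y=0$ only $\lambda=0$ survives, realized by any nonzero $f\in\Ker L$. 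For (4), the analogous identity for $(\B-P)f=\lambda f$ reads
\begin{align*}
(Lf,f)-\|Pf\|_{L^2}^2 = \Re\lambda\,\|f\|_{L^2}^2,
\end{align*}
so any $\Re\lambda\ge 0$ forces $(Lf,f)=0$ and $Pf=0$ simultaneously, hence $f\in\Ker L\cap(\Ker L)^\perp=\{0\}$, contradicting $f\ne 0$.

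The only delicate point is the implication $(Lf,f)=0\Rightarrow f\in\Ker L$ used in (3) and (4); for soft potentials $L$ lacks a spectral gap, but self-adjointness together with the explicit identification of $\Ker L$ in Theorem \ref{I_ThmL2}(b) still forces this via the spectral decomposition of $L$ on $(\Ker L)^\perp$, which is where the structural input from Theorem \ref{I_ThmL2} enters crucially.
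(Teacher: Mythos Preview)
Your proposal is correct and for parts (2), (3), (4) follows essentially the same route as the paper: bounded perturbation of the multiplication semigroup $e^{t\A}$ for (2), and the energy identity $(Lf,f)=\Re\lambda\|f\|^2$ (resp.\ $(Lf,f)-\|Pf\|^2=\Re\lambda\|f\|^2$) together with the implication $(Lf,f)=0\Rightarrow f\in\Ker L$ for (3) and (4).

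For (1) you take a slightly different path. The paper argues directly via the Lumer--Phillips type criterion in the appendix (Theorem \ref{semigroup_dissipative}): it verifies that $\B$ is dissipative, identifies $\B^*=2\pi iy\cdot\xi+L$ with domain $D_0$ (which requires a short domain computation), checks $\B^*$ is dissipative, and concludes. You instead bootstrap from (2): having already constructed $e^{t\B}$ on $L^2$ by bounded perturbation, you show it is a contraction via the differential inequality $\tfrac{d}{dt}\|e^{t\B}f\|^2=2(Le^{t\B}f,e^{t\B}f)\le 0$ for $f\in D_0$, then extend by density. Your route is a bit more elementary in that it avoids the adjoint-domain verification; the paper's route is self-contained for (1) without needing to first establish (2). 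Both are perfectly valid.

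Your closing remark about $(Lf,f)=0\Rightarrow f\in\Ker L$ is well placed; note that since $\nu$ is bounded for $\gamma\ge 0$, $L$ is a bounded self-adjoint non-positive operator on $L^2$, so this follows immediately from the spectral theorem (or from $\|(-L)^{1/2}f\|^2=0$) without needing any spectral gap.
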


  \begin{proof}

    1. Since $L$ is self-adjoint non-positive, we know $\B$ is dissipative on $D_0$. We claim that $(2\pi i y\cdot\xi+L,D_\beta)$ is the adjoint of $\B$ by using definition \ref{adjoint}. It suffices to show that $D(\B^*)=D_0$. 
    For any $u,v\in D_0$, we have 
    \begin{align*}
    	\<\B u,v\>_{L^2} = \<-2\pi i y\cdot\xi u + Lu, v\>_{L^2} = \<u,2\pi i y\cdot\xi v +Lv\>_{L^2},
    \end{align*}since $L$ is self-adjoint. Thus $u\mapsto \<\B u,v\>_{L^2}$ is continuous and hence $v\in D(\B^*)$. On the other hand, for any $u\in D_0$, $v\in D(\B^*)$, we have 
    \begin{align*}
    	\<\B u,v\>_{L^2} = \int_\Rd  u\,(\overline{2\pi i v\cdot\xi u+\nu v})\,d\xi + \int_\Rd Ku\,\overline{v}\,d\xi.
    \end{align*}
    Also, $K$ is linear continuous on $L^2$, thus $u\mapsto \int_\Rd  u\,(\overline{2\pi i v\cdot\xi v+\nu v})\,d\xi$ is linear continuous on $D_0$, hence has a unique bounded extension on $L^2$. By Riesz representation, $2\pi i v\cdot\xi v+\nu v\in L^2$ and so $v\in D_0$.
    In a conclusion, $D(\B^*)=D_0$.
    
    Now we have $(\B^*)^*=\B$, so $\B$ is closed. Also $\B^*$ is dissipative on $D_0$. Thus by theorem \ref{semigroup_dissipative}, $\B$ generates a contraction semigroup on $L^2$.

  2. For $\beta\in\R$, the semigroup generated by $\A$ on $L^2_\beta$ is defined by 
  \begin{align*}
    e^{t\A}u := e^{(-2\pi iy\cdot\xi-\nu)t}u,
  \end{align*}for any $u\in L^2_\beta$ and so $\|e^{t\A}\|_{L(L^2_\beta)}\le 1$. Indeed, $
	  \|(e^{t\A}u-u)/t - \A u\|_{L^2_\beta}\to 0,$ by Lebesgue dominated convergence theorem.

  Applying theorem \ref{semigroup_boundedpertur}, $\B=\A+K$ generates a semigroup on $L^2_\beta$ and
  \begin{align*}
    \|e^{t\B}\|_{L(L^2_\beta)}\le e^{t\|K\|_{L(L^2_\beta)}}.
  \end{align*}

  3. Consider $L^2$ to be the whole space.
  Let $\lambda\in\sigma_p(\B)$, then $\Re\lambda\le 0$ by \eqref{II_eq34}.
  Suppose $\Re\lambda=0$ and let $f\neq 0$, $f\in L^2$ be an eigenfunction of $\B$ corresponding to $\lambda$, then
  \begin{align*}
  	&\B f = -2\pi iy\cdot\xi f + Lf = \lambda f,\\
    &\Re(\B f,f)_{L^2} = (Lf,f)_{L^2} = 0.
  \end{align*}
  Then $f\in \text{Ker}L$, $Lf=0$. If $y=0$, then $\lambda =0$. If $y\neq 0$, then $2\pi iy\cdot\xi f =\Im\lambda f$, which contradicts to $f\neq 0$. Thus such $\lambda$ doesn't exist when $y \neq 0$.

  4. Let $\lambda\in\sigma_p(\B-P)$ and $f\neq0$ to be a eigenfunction of $\B-P$ corresponding to $\lambda$ in $L^2$, then
  \begin{align*}
  	&(\B-P) f = -2\pi iy\cdot\xi f + Lf -Pf = \lambda f,\\
    &\Re((\B-P) f,f)_{L^2} = (Lf,f)_{L^2}-\|Pf\|^2_{L^2} =\Re\lambda \|f\|_{L^2}.
  \end{align*}Since $L$ is non-positive, we have $\Re\lambda \le 0$. If $\Re\lambda=0$, then $(Lf,f)_{L^2} = \|Pf\|_{L^2} = 0$,
  and $f\in \text{Ker}L\cap (\text{Ker}L)^{\perp}$, and thus $f=0$. This is a contradiction and we complete the proof.
  \qe
  \end{proof}

  The following theorem gives the boundedness estimate on resolvent of $\A$.
  \begin{Thm}\label{II_main_estimate}
  Assume $\gamma\in[0,d)$, $\Re\lambda\ge 0$, $p\in(1,\infty)$. Fix $R>0$, $y\in\Rd$ and $y_1>0$. Then the followings are valid.

  (1). $\lambda\in\rho(\A)$ and
  $(\lambda I - \A)^{-1}: L^p_{\beta+\gamma}\to L^p_\beta$ is linear continuous with
  \begin{align}\label{II_eq314}
    \|(\lambda I - \A)^{-1}u\|_{L^p_{\beta+\gamma}\to L^p_\beta}
    &\le \frac{1}{\nu_0},\\
    \|(\lambda I - \A)^{-1}\chi_{|\xi|\ge R}u\|_{L^p_\beta}\label{II_eq315}
    &\le C_{\nu}\|u\|_{L^p_{\beta+\gamma}(|\xi|\ge R)}.
  \end{align}

  (2).
  If $|y|\le y_1$, $|\lambda|\ge 4\pi y_1 R$, then
  \begin{align}
    \|(\lambda I - \A)^{-1}\chi_{|\xi|\le R}u\|_{L^p_\beta}
    &\le C\|u\|_{L^p_\beta}\frac{1}{|\lambda|}.
  \end{align}
  Consequently,
  \begin{align}
    \|(\lambda I - \A)^{-1}u\|_{L^p_\beta}
    &\le C_{\nu}\|u\|_{L^p_{\beta+\gamma}(|\xi|\ge R)} + C\|u\|_{L^p_\beta}\frac{1}{|\lambda|},\\
    \|(\lambda I - \A)^{-1}Ku\|_{L^p_\beta}
    &\le C_{\nu}\|u\|_{L^p_{\beta-1}}(1+R)^{-1}
    +C\|u\|_{L^p_{\beta-\gamma-2}}\frac{1}{|\lambda|}.
  \end{align}

  (3).
  If $|y|\ge y_1$, then
  \begin{align}
    \|(\lambda I - \A)^{-1}\chi_{|\xi|\le R}u\|_{L^p_\beta}
    \le C_\nu\|u\|_{L^p_{\beta+\gamma}(|\xi|\le R,\,|\Im\lambda+2\pi y\cdot\xi|\le \frac{|y|}{\sqrt{y_1}})}
    +C_\nu \|u\|_{L^p_{\beta+\gamma}(|\xi|\le R)}\frac{1}{y_1^{\frac{p}{4}}}.
  \end{align}
  Consequently,
  \begin{align}
    \|(\lambda I - \A)^{-1}Ku\|_{L^2_\beta}
    &\le C_{\nu,d,q}\|u\|_{L^2_{\beta-2}(|\xi|\ge R)}
    +C_{\nu,d,q,\gamma}
    \|u\|_{L^{2}_{\beta-1}}
    \Big(\frac{R^{\frac{\delta(d-1)}{2(2+\delta)}}}{y_1^{\frac{\delta}{4(2+\delta)}}}
    +\frac{R^{\frac{d\delta}{2(2+\delta)}}}{y_1^{\frac{p}{4}}}\Big),
  \end{align}where $\delta = \frac{2}{p_0-2}\in(0,1)$ and $p_0:=\frac{d}{d-\gamma}+\frac{d}{2}$.

  \end{Thm}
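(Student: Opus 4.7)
The operator $\A=-2\pi iy\cdot\xi-\nu(\xi)$ is multiplication by a function of $\xi$, so $(\lambda I-\A)^{-1}$ is multiplication by
\[
m(\xi):=\frac{1}{\lambda+2\pi iy\cdot\xi+\nu(\xi)},
\]
and the entire theorem reduces to pointwise bounds on $|m|$ on suitable subregions of $\Rd$. The identity
\[
|\lambda+2\pi iy\cdot\xi+\nu(\xi)|^{2}=(\Re\lambda+\nu(\xi))^{2}+(\Im\lambda+2\pi y\cdot\xi)^{2}
\]
together with Theorem~\ref{I_ThmL1}(1) yields, whenever $\Re\lambda\ge 0$,
\[
|m(\xi)|\le\min\!\Bigl\{\frac{(1+|\xi|)^{\gamma}}{\nu_{0}},\;\frac{1}{|\Im\lambda+2\pi y\cdot\xi|}\Bigr\}.
\]
Part~(1) and the restricted version~(3.2) are immediate from the first minimand; in particular~(1) shows $\lambda\in\rho(\A)$ on $L^{p}_{\beta}$.

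For Part~(2), when $|y|\le y_{1}$, $|\lambda|\ge 4\pi y_{1}R$ and $|\xi|\le R$, at least one of $|\Re\lambda|$ and $|\Im\lambda|$ exceeds $|\lambda|/\sqrt{2}$. If $\Re\lambda\ge|\lambda|/\sqrt{2}$ then $\Re\lambda+\nu(\xi)\ge|\lambda|/\sqrt{2}$; otherwise $|\Im\lambda+2\pi y\cdot\xi|\ge|\lambda|/\sqrt{2}-2\pi y_{1}R\ge c|\lambda|$. Either way $|m(\xi)|\le C/|\lambda|$ on $|\xi|\le R$, which is the first inequality of~(2). The $Ku$ consequence is obtained by splitting $Ku=\chi_{|\xi|\le R}Ku+\chi_{|\xi|\ge R}Ku$, applying the bound just proved on the first piece and~(3.2) on the second, and invoking the shifted forms $\|Kf\|_{L^{p}_{\beta+\gamma+1}}\le C\|f\|_{L^{p}_{\beta-1}}$ and $\|Kf\|_{L^{p}_{\beta}}\le C\|f\|_{L^{p}_{\beta-\gamma-2}}$ of Theorem~\ref{I_proerties_K}(1); the factor $(1+R)^{-1}$ comes from $(1+|\xi|)^{-1}\le (1+R)^{-1}$ on $|\xi|\ge R$.

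For Part~(3), when $|y|\ge y_{1}$, we split $|\xi|\le R$ into the slab $S=\{|\Im\lambda+2\pi y\cdot\xi|\le |y|/\sqrt{y_{1}}\}$ and its complement inside the ball. On $S$ we use $|m|\le (1+|\xi|)^{\gamma}/\nu_{0}$, producing the first term of~(3). On the complement the imaginary-part bound gives $|m|\le \sqrt{y_{1}}/|y|\le 1/\sqrt{y_{1}}$; writing $|m|^{p}\le |m|^{p\alpha}|m|^{p(1-\alpha)}$ with $|m|^{p\alpha}\le y_{1}^{-p\alpha/2}$ and $|m|^{p(1-\alpha)}\le ((1+|\xi|)^{\gamma}/\nu_{0})^{p(1-\alpha)}$, and choosing $\alpha$ so the $y_{1}$-exponent equals $p/4$, yields the second term (the residual weight $\gamma(1-\alpha)\le\gamma$ is absorbed into $L^{p}_{\beta+\gamma}$). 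For the $L^{2}$ consequence we split $Ku$ by $|\xi|\ge R$ versus $|\xi|\le R$: the large-$\xi$ piece is controlled by~(3.2) followed by a local weight shift via Theorem~\ref{I_proerties_K}(1); on $S$, which has Lebesgue measure $\lesssim R^{d-1}/\sqrt{y_{1}}$ (a slab of thickness $O(1/\sqrt{y_{1}})$ intersected with the ball of radius $R$), we apply H\"older in $\xi$ to pass to an $L^{q_\theta}$ norm with $q_\theta>2$, invoke the $L^{p_\theta}\!\to\! L^{q_\theta}$ mapping of $K$ from Theorem~\ref{I_proerties_K}(4), and finish with a weight-H\"older step back to $L^{2}_{\beta-1}$; the choice $1/q_\theta=1/2-\delta/(2(2+\delta))$ with $\delta=2/(p_{0}-2)$ reproduces the stated powers of $R$ and $y_{1}$.

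The delicate step is the slab estimate for the $Ku$ consequence: three ingredients must be balanced, namely the slab measure $R^{d-1}/\sqrt{y_{1}}$, the mapping property of $K$ from Theorem~\ref{I_proerties_K}(4) (available only for $p_\theta<p_{0}$), and a final weight interpolation between $L^{2}_{\beta-1}$ and $L^{p_\theta}_\beta$. The parameter $\delta=2/(p_{0}-2)$ is precisely the slack that keeps $q_\theta$ strictly above $2$, which is what lets H\"older on the slab extract a positive power of $1/\sqrt{y_{1}}$; once $\theta$ is fixed the stated $R$-powers are forced. Everything else reduces to the pointwise bounds on $|m|$ and routine applications of Theorem~\ref{I_proerties_K}.
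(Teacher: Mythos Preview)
Your proposal is correct and follows essentially the same route as the paper: pointwise bounds on the multiplier $m(\xi)$, the slab decomposition for $|y|\ge y_1$, and H\"older combined with the $L^{p_\theta}\to L^{q_\theta}$ mapping of $K$ for the final $Ku$ estimate. One simplification: with the choice $\delta=2/(p_0-2)$ one gets $p_\theta=2$ exactly (check $\theta=p_0/(2(p_0-1))$), so the ``weight-H\"older step back to $L^2_{\beta-1}$'' is unnecessary---the bound $\|Ku\|_{L^{2+\delta}_{\beta+\gamma+1}}\le C\|u\|_{L^2_\beta}$ from Theorem~\ref{I_proerties_K}(4) applies directly.
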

  \begin{proof}

    1. For $\Re\lambda\ge 0$, $p\in(1,\infty)$, we have for $u\in L^p_{\beta+\gamma}$,
    \begin{align*}
      (\lambda I - \A)^{-1} &= \frac{1}{\lambda+\nu(\xi)+2\pi iy\cdot\xi},\\
      \|(\lambda I - \A)^{-1}u\|^p_{L^p_\beta}
      &\le \int_\Rd\frac{|(1+|\xi|)^{\beta}u(\xi)|^p}{|\nu(\xi)|^p}\,d\xi
      \le \frac{1}{\nu_0^p}\|u\|^p_{L^p_{\beta+\gamma}}.
    \end{align*}
    Thus $\lambda\in\rho(\A)$ and $(\lambda I-\A)^{-1}: L^2_{\beta+\gamma}\to L^2_\beta$ is linear continuous.
    For $R>0$, similarly, we have
    \begin{align*}
      \|(\lambda I - \A)^{-1}\chi_{|\xi|\ge R}u\|_{L^p_\beta}
      &\le C_{\nu}\|u\|_{L^p_{\beta+\gamma}(|\xi|\ge R)}.
    \end{align*}

  2.
  Fix $y_1>0$. If $|y|\le y_1$, $|\xi|\le R$, then $|2\pi y\cdot \xi|\le 2\pi y_1R$.
  Thus for $|\lambda|\ge 4\pi y_1R$,
  \begin{align*}
    \|(\lambda I - \A)^{-1}\chi_{|\xi|\le R}u\|_{L^p_\beta}
    &\le \|u\|_{L^p_\beta}\sup_{|\xi|\le R}\frac{1}{|\lambda+\nu(\xi)+2\pi iy\cdot\xi|}\\
    &\le \sqrt{2}\|u\|_{L^p_\beta}
    \sup_{|\xi|\le R}\frac{1}{\Re\lambda+|\Im\lambda|-2\pi iy_1R}\\
    &\le C\|u\|_{L^p_\beta}\frac{1}{|\lambda|}.
  \end{align*}
  Thus for $\alpha > \beta+\gamma$, using \eqref{II_eq315}, we have
  \begin{align*}
    \|(\lambda I - \A)^{-1}u\|_{L^p_\beta}
    &\le C_{\nu}\|u\|_{L^p_{\beta+\gamma}(|\xi|\ge R)}\notag
    +C\|u\|_{L^p_\beta}\frac{1}{|\lambda|}\\
    &\le C_{\nu}\|u\|_{L^p_{\alpha}}(1+R)^{\beta+\gamma-\alpha}
    +C\|u\|_{L^p_\beta}\frac{1}{|\lambda|}.
  \end{align*}
  Pick $\alpha = \beta+\gamma+1$, then
  \begin{align*}
    \|(\lambda I - \A)^{-1}Ku\|_{L^p_\beta}\notag
    &\le C_{\nu}\|Ku\|_{L^p_{\beta+\gamma+1}}(1+R)^{-1}
    +C\|Ku\|_{L^p_\beta}\frac{1}{|\lambda|}\\
    &\le C_{\nu}\|u\|_{L^p_{\beta-1}}(1+R)^{-1}
    +C\|u\|_{L^p_{\beta-\gamma-2}}\frac{1}{|\lambda|}.
  \end{align*}

  3. If $|y|\ge y_1$, we will use another method.
  For $R>0$,
  \begin{align*}
    \|(\lambda I - \A)^{-1}\chi_{|\xi|\le R}u\|^p_{L^p_\beta}
    &\le C_{\nu}\int_\Rd
    (1+|\xi|)^{p\beta+p\gamma}|u(\xi)|^p
    \Big(\frac{|\nu(\xi)|^2}{|\Re\lambda+\nu|^2+|\Im\lambda+2\pi y\cdot\xi|^2}\Big)^{p/2}\,d\xi.
  \end{align*}
  We divide this integral into two parts:
  \begin{align*}
    \int_{
    \substack{
    |\xi|\le R \\
    |\Im\lambda+2\pi y\cdot\xi|\le\frac{|y|}{\sqrt{y_1}}
    }
    }
    +
    \int_{
    \substack{
    |\xi|\le R \\
    |\Im\lambda+2\pi y\cdot\xi|>\frac{|y|}{\sqrt{y_1}}
    }
    }.
  \end{align*}
  Notice if $\xi\in \{\xi:|\Im\lambda+2\pi y\cdot\xi|\le\frac{|y|}{\sqrt{y_1}}\}$, we have $
    \frac{|\nu(\xi)|^2}{|\Re\lambda+\nu|^2+|\Im\lambda+2\pi y\cdot\xi|^2} \le 1$,
  and if $\xi\in \{\xi:|\Im\lambda+2\pi y\cdot\xi|>\frac{|y|}{\sqrt{y_1}}\}$, $|y|\ge y_1$, we have $
    \frac{|\nu(\xi)|^2}{|\Re\lambda+\nu|^2+|\Im\lambda+2\pi y\cdot\xi|^2}
    \le \frac{\nu_1\sqrt{y_1}}{|y|} \le \frac{C_\nu}{\sqrt{y_1}}$.
  Therefore if $|y|\ge y_1$,
  \begin{align}\label{II_temp2222}
    \|(\lambda I - \A)^{-1}\chi_{|\xi|\le R}u\|_{L^p_\beta}
    \le C_\nu\|u\|_{L^p_{\beta+\gamma}(|\xi|\le R,\,|\Im\lambda+2\pi y\cdot\xi|\le \frac{|y|}{\sqrt{y_1}})}
    +C_{\nu,p} \|u\|_{L^p_{\beta+\gamma}(|\xi|\le R)}\frac{1}{y_1^{\frac{p}{4}}}.
  \end{align}

  4. Suppose $|y|\ge y_1$. To prove the last assertion, we need to use the properties of $K$ from \ref{I_proerties_K}.
  Pick $p_0=\frac{d}{d-\gamma}+\frac{d}{2}$, $\theta=\frac{p_0}{2(p_0-1)}$
  and let $\delta=\frac{1}{1-\theta}-2>0$. Then $p_\theta = 2$, $q_\theta=2+\delta$ satisfy \eqref{I_eqpq} and
  \begin{align}
    \|Kf\|_{L^{2+\delta}_{\beta+\gamma+1}}\le C_{\gamma,d,q,p,\theta}\|f\|_{L^{2}_\beta},
  \end{align}

  Pick $p=2$ in \eqref{II_temp2222}, then using the fact $\left|\{|\xi|\le R, |\Im\lambda+2\pi y\cdot\xi|\le \frac{|y|}{\sqrt{y_1}}\}\right|\le \frac{C_dR^{d-1}}{\sqrt{y_1}}$ and H\"older's inequality, we have
  \begin{align*}
    &\|(\lambda I - \A)^{-1}\chi_{|\xi|\le R}u\|_{L^2_\beta}\\
    &\le C_\nu
    \|u\|_{L^{2+\delta}_{\beta+\gamma}}
    \big(\int_{\substack{|\xi|\le R\\ |\Im\lambda+2\pi y\cdot\xi|\le \frac{|y|}{\sqrt{y_1}}}}\,d\xi\big)^{\frac{\delta}{2(2+\delta)}}
    +C_\nu
    \|u\|_{L^{2+\delta}_{\beta+\gamma}(|\xi|\le R)}
    \big(\int_{|\xi|\le R}\,d\xi\big)^{\frac{\delta}{2(2+\delta)}}\frac{1}{y_1^{\frac{p}{4}}}\\
    &=  C_{\nu,d,\gamma}
    \|u\|_{L^{2+\delta}_{\beta+\gamma}}
    \Big(R^{\frac{\delta(d-1)}{2(2+\delta)}}y_1^{-\frac{\delta}{4(2+\delta)}}
    +R^{\frac{d\delta}{2(2+\delta)}}y_1^{-\frac{p}{4}}\Big).
  \end{align*}
  Thus
  \begin{align*}
    \|(\lambda I - \A)^{-1}\chi_{|\xi|\le R}Ku\|_{L^2_\beta}
    &\le C_{\nu,d,q,\gamma}
    \|u\|_{L^{2}_{\beta-1}}
    \Big(R^{\frac{\delta(d-1)}{2(2+\delta)}}y_1^{-\frac{\delta}{4(2+\delta)}}
    +R^{\frac{d\delta}{2(2+\delta)}}y_1^{-\frac{p}{4}}\Big).
  \end{align*}
  With \eqref{II_eq315}, we can get (3).
  \qe\end{proof}

  The following theorem gives the existence of inverse $(I-(\lambda I - \A)^{-1}K)^{-1}$ and $(I-(\lambda I - \A)^{-1}K_0)^{-1}$ on $L^2_\beta$.

  \begin{Thm}\label{II_inverse_1}Fixed $\beta\in\R$, we consider $L^2_\beta$ to be the whole space.

    (1). For $y\in\Rd$, $\Re\lambda\ge 0$, if $y\neq0$ or $\lambda\neq0$, then
    \begin{align}
      1 \in \rho((\lambda I - \A)^{-1}K).
    \end{align}

    (2). For $y\in\Rd$, $\Re\lambda\ge 0$, we have
    \begin{align}
      1 \in \rho((\lambda I - \A)^{-1}K_0).
    \end{align}
  \end{Thm}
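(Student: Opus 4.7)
The plan is to reduce both statements to the Fredholm alternative, after establishing that $(\lambda I - \A)^{-1}K$ and $(\lambda I - \A)^{-1}K_0$ are compact operators on $L^2_\beta$, and then convert the kernel-triviality question into the point-spectrum statements for $\B$ and $\B - P$ proved in Theorem \ref{II_spectrum}.

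For the compactness step, I would combine the two main estimates on $K$. Taking $\alpha = \beta$ and $\beta' = \beta - 1$ in Theorem \ref{I_proerties_K}(2), the operator $K : L^2_\beta \to L^2_{\beta+\gamma+1}$ is compact; embedding continuously into $L^2_{\beta+\gamma}$ and applying the bounded operator $(\lambda I - \A)^{-1} : L^2_{\beta+\gamma} \to L^2_\beta$ from Theorem \ref{II_main_estimate}(1) produces a compact operator $L^2_\beta \to L^2_\beta$. For (2), since $P f = \sum_{i=0}^{d+1}(f,\varphi_i)\varphi_i$ with $\varphi_i$ Schwartz, $P$ is finite-rank on $L^2_\beta$ (the pairings $(f,\varphi_i)$ are well defined by Cauchy--Schwarz since $\varphi_i \in L^2_{-\beta}$), so $(\lambda I - \A)^{-1}K_0$ is compact as the difference of $(\lambda I - \A)^{-1}K$ and a finite-rank operator. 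The Fredholm alternative then reduces both claims to showing triviality of $\ker(I - (\lambda I - \A)^{-1}K)$, respectively $\ker(I - (\lambda I - \A)^{-1}K_0)$, in $L^2_\beta$.

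Suppose $f \in L^2_\beta$ satisfies $f = (\lambda I - \A)^{-1}Kf$. The main subtlety is that the spectral information in Theorem \ref{II_spectrum} lives in $L^2$, not in $L^2_\beta$ for negative $\beta$, so a bootstrap is needed. Applying $\|Kf\|_{L^2_{\beta+\gamma+2}} \le C\|f\|_{L^2_\beta}$ from Theorem \ref{I_proerties_K}(1) and then the mapping $(\lambda I - \A)^{-1} : L^2_{\beta+2+\gamma} \to L^2_{\beta+2}$ (Theorem \ref{II_main_estimate}(1) with $\beta$ replaced by $\beta+2$), we upgrade to $f \in L^2_{\beta+2}$. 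Iterating $n$ times with $n$ chosen so that $\beta + 2n \ge 0$ yields $f \in L^2$. The identical argument works for $K_0$ since $Pf$ is Schwartz and so contributes no regularity loss.

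With $f \in L^2$, the identity $f = (\lambda I - \A)^{-1}Kf$ rewrites as $\B f = \lambda f$, placing $\lambda$ in $\sigma_p(\B)$ considered on $L^2$; Theorem \ref{II_spectrum}(3) combined with $\Re\lambda \ge 0$ and the hypothesis that not both $y = 0$ and $\lambda = 0$ forces $f = 0$, proving (1). For (2), the corresponding identity gives $(\B - P)f = \lambda f$, so $\lambda \in \sigma_p(\B - P)$, which by Theorem \ref{II_spectrum}(4) contradicts $\Re\lambda \ge 0$ unless $f = 0$. In both cases the Fredholm alternative then delivers $1 \in \rho((\lambda I - \A)^{-1}K)$ and $1 \in \rho((\lambda I - \A)^{-1}K_0)$ on $L^2_\beta$. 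The bootstrap is the one step that requires care; the rest is a clean concatenation of the already-established estimates.
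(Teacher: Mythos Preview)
Your proof is correct and follows essentially the same route as the paper: compactness of $(\lambda I-\A)^{-1}K$ (resp.\ $K_0$) via the compactness of $K$ and boundedness of the resolvent, Fredholm alternative, a bootstrap in the weight index, and then reduction to the point-spectrum statements of Theorem~\ref{II_spectrum}. The only cosmetic difference is that the paper iterates the bootstrap all the way to $\bigcap_{\beta}L^2_\beta\subset D_0$, which makes membership in the domain of $\B$ immediate; you stop at $L^2$, but since $(\lambda+2\pi i y\cdot\xi+\nu)f=Kf$ with $Kf,\nu f\in L^2$ this already forces $y\cdot\xi f\in L^2$, so $f\in D_0$ and your rewriting $\B f=\lambda f$ is justified.
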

  \begin{proof}

  1. If not, we suppose $1 \in \sigma((\lambda I - \A)^{-1}K)$.
  Since $(\lambda I-\A)^{-1}:L^2_{\beta+\gamma}\to L^2_\beta$ is linear continuous and $K:L^2_{\beta}\to L^2_{\beta+\gamma+1}$ is compact, we know $(\lambda I - \A)^{-1}K:L^2_{\beta+\gamma}\to L^2_{\beta+\gamma}$ is compact, for $\beta\in\R$.
  Thus by Fredholm alternative, on $L^2_{\beta+\gamma}$, we have
  \begin{align*}
    1\in \sigma_p((\lambda I - \A)^{-1}K).
  \end{align*}
  Thus for some $0\neq u\in L^2_{\beta+\gamma}$,
  \begin{align}\label{II_e1}
    u=(\lambda I - \A)^{-1}Ku,
  \end{align}
  and hence $u\in L^2_{\beta+\gamma+2}$. By using \eqref{II_e1} inductively, we have $u\in \cap_{\beta\in\R}L^2_{\beta}\subset D_0$ and
  \begin{align}
    (\lambda I-\B)u=0.\label{II_e2}
  \end{align}
  Thus $\lambda$ is an eigenvalue of $\B$ and $\Re\lambda = 0$ by theorem \ref{II_spectrum} (1).
  But $(y,\lambda)\neq 0$, so equation \eqref{II_e2} contradicts to theorem \ref{II_spectrum} (3).

  2. The proof of the second assertion is similar, but using the theorem \ref{II_spectrum} (4).
  \qe\end{proof}

  The following lemma is used for proving the uniformly boundedness of $(\lambda I-\B)^{-1}$ and $(\lambda I-\BB)^{-1}$.
  Here we consider variable $(\lambda,y)\in \overline{\C_+}\times\Rd$.
  \begin{Lem}\label{II_continuous}Assume $\gamma\in[0,d)$.

    (1). For $\Re\ge 0$, $\alpha > \beta+\gamma$, we have
    \begin{align}
      (\lambda I - \A)^{-1}&\in C\left(\overline{\C_+}\times\Rd; L(L^2_{\alpha},L^2_\beta)\right),\notag\\\label{II_ec1}
      (\lambda I - \A)^{-1}K&\in C\left(\overline{\C_+}\times\Rd; L(L^2_{\alpha},L^2_{\beta+\gamma+2})\right)
    \end{align}

    (2). For $r>0$,
    \begin{align*}
      (I-(\lambda I - \A)^{-1}K)^{-1}\in BC(\overline{\C_+}\times  (\Rd\setminus B_r); L(L^2_\beta))\cap BC((\overline{\C_+}\setminus B_r)\times \Rd; L(L^2_\beta)),
    \end{align*}where $B_r\in \Rd$ is the closed ball in $\Rd$ with center $0$ and radius $r$.

    (3).
    \begin{align*}
      (I-(\lambda I - \A)^{-1}K_0)^{-1}\in BC\left(\overline{\C_+}\times\Rd; L(L^2_\beta)\right).
    \end{align*}
  \end{Lem}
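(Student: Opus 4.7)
The plan is to leverage the multiplication-operator structure of $(\lambda I - \A)^{-1}$ for part (1), and then bootstrap to parts (2) and (3) by combining the standard identity for differences of inverses with the decay estimates of Theorem \ref{II_main_estimate}.

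For part (1), $(\lambda I - \A)^{-1}$ is the multiplication operator by $m(\lambda, y; \xi) := (\lambda + \nu(\xi) + 2\pi i y \cdot \xi)^{-1}$, whose operator norm from $L^2_\alpha$ to $L^2_\beta$ equals $\esssup_\xi |m(\xi)|(1+|\xi|)^{\beta-\alpha}$. Thus norm continuity in $(\lambda,y)$ reduces to controlling this weighted essential supremum of $m(\lambda,y;\cdot) - m(\lambda_0,y_0;\cdot)$. On any ball $|\xi| \le N$, joint continuity of $m$ together with compactness yields uniform convergence; for $|\xi| > N$, the pointwise bound $|m| \le \nu_0^{-1}(1+|\xi|)^\gamma$ from Theorem \ref{II_main_estimate}(1), together with the strict weight gap $\alpha > \beta + \gamma$, provides a dominating function $2\nu_0^{-1}(1+|\xi|)^{\gamma+\beta-\alpha}$ which tends to $0$ as $|\xi| \to \infty$. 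An $\varepsilon/2$ argument (choose $N$ large first, then $(\lambda,y)$ near $(\lambda_0, y_0)$) then delivers norm continuity. The second assertion follows by viewing $(\lambda I - \A)^{-1}K$ as the composition of the $(\lambda,y)$-independent continuous map $K: L^2_\alpha \to L^2_{\alpha+\gamma+2}$ (Theorem \ref{I_proerties_K}(1)) with the family $(\lambda I - \A)^{-1}: L^2_{\alpha+\gamma+2} \to L^2_{\beta+\gamma+2}$, whose continuity follows from the first assertion because the required hypothesis $\alpha+\gamma+2 > (\beta+\gamma+2)+\gamma$ reads exactly $\alpha > \beta+\gamma$.

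For parts (2) and (3), the same composition argument with input and output weights both equal to $\beta$ (allowed since $\beta > \beta-2$ trivially) shows that $(\lambda I - \A)^{-1}K$ belongs to $C(\overline{\C_+}\times\R^d; L(L^2_\beta))$. The same holds for $(\lambda I - \A)^{-1}K_0$ once we verify continuity of the finite-rank piece $(\lambda I - \A)^{-1}P\,u = \sum_{i=0}^{d+1}(u,\varphi_i)\,(\lambda I - \A)^{-1}\varphi_i$, which is immediate by dominated convergence because each $\varphi_i$ is Schwartz and $|\varphi_i|/\nu$ remains Schwartz. By Theorem \ref{II_inverse_1}, the operators $I - (\lambda I - \A)^{-1}K$ and $I - (\lambda I - \A)^{-1}K_0$ are invertible on the domains specified in (2) and (3) respectively. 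The identity
\begin{equation*}
  (I-T)^{-1} - (I - T_0)^{-1} = (I-T)^{-1}(T - T_0)(I-T_0)^{-1}
\end{equation*}
then transfers norm continuity of $T$ to norm continuity of $(I-T)^{-1}$ in a neighborhood of any point at which $(I-T_0)^{-1}$ exists as a bounded operator.

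It remains to upgrade continuity to global boundedness on the two unbounded domains. The strategy is standard: by continuity, $(I - T(\lambda,y))^{-1}$ is bounded on each compact subset of its domain, so it suffices to show that $\|T(\lambda,y)\|_{L(L^2_\beta)}$ becomes uniformly small outside a sufficiently large compact set, forcing the Neumann-series bound $\|(I-T)^{-1}\| \le 2$. This is exactly what Theorem \ref{II_main_estimate}(2)--(3) provides for $T = (\lambda I - \A)^{-1}K$: in the regime $|y|\le y_1$ with $|\lambda|$ large, the bound $C_\nu(1+R)^{-1} + C/|\lambda|$ is rendered arbitrarily small by choosing $R$ large first and then $|\lambda|$ even larger; in the regime $|y|\ge y_1$ large, applying part (3) to a unit-norm $u\in L^2_\beta$ (so that $\|u\|_{L^2_{\beta-2}(|\xi|\ge R)} \le (1+R)^{-2}$) gives a bound made small by the analogous two-step tuning, first in $R$ and then in $y_1$. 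For $T = (\lambda I - \A)^{-1}K_0$ one additionally observes that $(\lambda I - \A)^{-1}\varphi_i \to 0$ in $L^2_\beta$ as $|\lambda|+|y|\to\infty$ by dominated convergence, so the finite-rank $P$-contribution also vanishes at infinity. The main technical nuisance is keeping the order of tuning $R$, $y_1$, and $|\lambda|$ straight across the two parameter regimes of Theorem \ref{II_main_estimate}; once this bookkeeping is carried out, combining tail smallness with continuity on compacta yields the claimed $BC$ statements.
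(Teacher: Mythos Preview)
Your proposal is correct and follows essentially the same route as the paper: exploit the multiplication-operator structure of $(\lambda I-\A)^{-1}$ together with the weight gap $\alpha>\beta+\gamma$ to prove norm continuity via a large-$|\xi|$/small-$|\xi|$ split, transfer continuity to the inverses by the resolvent difference identity, and then combine continuity on compacta with the smallness estimates of Theorem~\ref{II_main_estimate} to obtain the $BC$ statements. The only cosmetic differences are that the paper writes out an explicit Lipschitz bound on the symbol for $|\xi|\le R$ (where you invoke uniform continuity on compacta), and the paper treats $K_0$ by saying ``similar to $K$'' whereas you isolate the finite-rank $P$-piece and dispatch it by dominated convergence---both are fine.
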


  \begin{proof}
  1. Let $\alpha>\beta+\gamma$.
  For $(\lambda_1,y_1),(\lambda_2,y_2)\in \overline{\C_+}\times\Rd$, we have
  \begin{align*}
    (\lambda_1 I - \widehat{A}(y_1))^{-1}-(\lambda_2 I - \widehat{A}(y_2))^{-1}
    &=\frac{1}{\lambda_1+\nu(\xi)+2\pi iy_1\cdot\xi} - \frac{1}{\lambda_2+\nu(\xi)+2\pi iy_2\cdot\xi}
  \end{align*}
  On one hand, by using \eqref{II_eq314} in \ref{II_main_estimate}, we have
  \begin{align}
    \left\|\big[(\lambda_1 I - \widehat{A}(y_1))^{-1}-(\lambda_2 I - \widehat{A}(y_2))^{-1}\big]\chi_{|\xi|\ge R}\right\|_{L(L^2_{\beta+\gamma},L^2_\beta)}
    &\le C_{\nu}.\label{II_e3}
  \end{align}
  On the other hand,
  \begin{align*}
    \left|\frac{1}{\lambda_1+\nu(\xi)+2\pi iy_1\cdot\xi} - \frac{1}{\lambda_2+\nu(\xi)+2\pi iy_2\cdot\xi}\right|
    &\le C_{\nu}(1+|\xi|)^{2\gamma}(|\lambda_2-\lambda_1|+2\pi |y_2-y_1|\,|\xi|).
  \end{align*}
  Thus
  \begin{align}
    \left\|\big[(\lambda_1 I - \widehat{A}(y_1))^{-1}-(\lambda_2 I - \widehat{A}(y_2))^{-1}\big]\chi_{|\xi|\le R}\right\|_{L(L^2_{\beta},L^2_\beta)}
    \le C_{\nu}
    (1+R)^{2\gamma}(|\lambda_2-\lambda_1|+R|y_2-y_1|).\label{II_e4}
  \end{align}

  Combining equation \eqref{II_e3} and \eqref{II_e4}, we have for $\alpha > \beta+\gamma$,
  \begin{align*}
    \|(\lambda_1 I - &\widehat{A}(y_1))^{-1}-(\lambda_2 I - \widehat{A}(y_2))^{-1}\|_{L(L^2_\alpha,L^2_\beta)}\\
    &\le C_{\nu}(1+R)^{\beta+\gamma-\alpha}
    +C_{\nu} (1+R)^{2\gamma}(|\lambda_2-\lambda_1|+R|y_2-y_1|).
  \end{align*}
  Thus
  \begin{align*}
    \|(\lambda_1 I - \widehat{A}(y_1))^{-1}-(\lambda_2 I - \widehat{A}(y_2))^{-1}\|_{L(L^2_\alpha,L^2_\beta)}
    &\to 0,
  \end{align*}
  as $|(\lambda_1,y_1) - (\lambda_2,y_2)|\to 0$ and $(\lambda_1,y_1),(\lambda_2,y_2)\in \overline{\C_+}\times\Rd$.
  Therefore,
  \begin{align}
    (\lambda I - \A)^{-1}&\in C\left(\overline{\C_+}\times\Rd; L(L^2_\alpha,L^2_\beta)\right),\notag\\
    (\lambda I - \A)^{-1}K&\in C\left(\overline{\C_+}\times\Rd; L(L^2_\alpha,L^2_{\beta+\gamma+2})\right).\label{II_174}
  \end{align}

  2. Pick $\alpha=\beta+\gamma+2$ in \eqref{II_174},
  then we have, for any $\beta\in\R$, \begin{align}\label{II_temp123}
    (\lambda I - \A)^{-1}K\in C(\overline{\C_+}\times\Rd; L(L^2_\beta)).
  \end{align}
  Also  theorem \ref{II_inverse_1} shows that
  $(I-(\lambda I - \A)^{-1}K)^{-1}$ exists on $L^2_\beta$ for $(\lambda, y)\neq (0,0)$.
  Firstly we prove the resolvent $(\eta I-T)^{-1}$ is continuously depending on $T$.
  For any bounded linear operators $T_1$, $T_2$ defined on the same Banach space, if $\eta\in \rho(T_1)\cap\rho(T_2)$, then whenever $\|T_1-T_2\|\le \frac{1}{2\|(\eta I -T_1)^{-1}\|}$,
  \begin{align*}
    (\eta I - T_1)^{-1} - (\eta I - T_2)^{-1} &= (\eta I - T_1)^{-1} (T_1-T_2) (\eta I - T_2)^{-1}\\
    &= \sum^\infty_{n=1}\big((\eta I -T_2)^{-1}(T_1-T_2)\big)^n(\eta I -T_2)^{-1},\\
    \|(\eta I - T_1)^{-1} - (\eta I - T_2)^{-1}\| &\le \frac{1}{2}\|T_1-T_2\|\|(\eta I -T_2)^{-1}\|^2.
  \end{align*}
  And so the resolvent $(\eta I-T)^{-1}$ of $T$ is continuous with respect to $T$.
  Now $(I-(\lambda I - \A)^{-1}K)^{-1}$ exists on $L^2_\beta$ for $(\lambda, y)\neq (0,0)$, thus using \eqref{II_temp123}, we have
  \begin{align*}
    (I-(\lambda I - \A)^{-1}K)^{-1}\in C(\overline{\C_+}\times &(\Rd\setminus\{0\}); L(L^2_\beta))\cap C((\overline{\C_+}\setminus\{0\})\times \Rd; L(L^2_\beta)).
  \end{align*}
  Similarly, $(I-(\lambda I - \A)^{-1}K_0)^{-1}$ exists on $L^2_\beta$ for $(\lambda, y)\in\overline{\C_+}\times\Rd$ and
  \begin{align*}
    (I-(\lambda I - \A)^{-1}K_0)^{-1}\in C(\overline{\C_+}\times\Rd; L(L^2_\beta,L^2_\beta)).
  \end{align*}
  These prove the continuity.

  3. Fix $y_1>0$, $R>0$, we shall use theorem \ref{II_main_estimate} to prove the boundedness.
   If $|y|\le y_1$, $\Re\lambda>0$ with $|\lambda|\ge 4\pi y_1 R$,
   then
  \begin{align}\label{II_eb1}
    \|(\lambda I - \A)^{-1}K\|_{L^2_\beta\to L^2_\beta}\le C_{\nu,d}[(1+R)^{-1}+\frac{1}{|\lambda|}].
  \end{align}
  If $|y|\ge y_1$,
  \begin{align}\label{II_eb2}
    \|(\lambda I - \A)^{-1}K\|_{L^2_\beta\to L^2_\beta}
    &\le C_{\nu,d,q,\gamma}\Big((1+R)^{-1}
    +
    R^{\frac{\delta(d-1)}{2(2+\delta)}}y_1^{-\frac{\delta}{4(2+\delta)}}
    +R^{\frac{d\delta}{2(2+\delta)}}y_1^{-\frac{p}{4}}\Big).
  \end{align}
  Write $C_0 = \max(C_{\nu,d},C_{\nu,d,q,\gamma})$.
  Firstly we pick a sufficiently large $R_1=R_1(\nu,d,q,\gamma)$ s.t.
  \begin{align*}
    C_{0}(1+R_1)^{-1}<1/4.
  \end{align*}
  Then with this $R_1$, we pick $y_1 = y_1(R_1)$ so large that
  \begin{align*}
    C_{0}\Big(R^{\frac{\delta(d-1)}{2(2+\delta)}}y_1^{-\frac{\delta}{4(2+\delta)}}
    +R^{\frac{d\delta}{2(2+\delta)}}y_1^{-\frac{p}{4}}\Big)<1/4.
  \end{align*}
  Let $r_1 = \max(4\pi y_1 R_1, 1+R_1)$ and
  \begin{align*}
    B_0(r,r_1,y_1)&:=\{(\lambda,y) : \Re\lambda\ge 0, |\lambda|\le r_1, |y|\le y_1, |y|\ge r\},\\
    B_1(r,r_1,y_1)&:=\{(\lambda,y) : \Re\lambda\ge 0, |y|\ge r\}\setminus B_0(r,r_1,y_1).
  \end{align*}
  Since $(I-(\lambda I - \A)^{-1}K)^{-1}\in C(\overline{\C_+}\times\Rd\setminus\{0\}; L(L^2_\beta,L^2_\beta))$,
  and $B_0(r,r_1,y_1)$ is a compact set, we have
  \begin{align}\label{II_e2222}
    \sup_{(\lambda,y)\in B_0(r,r_1,y_1)}\|(I-(\lambda I - \A)^{-1}K)^{-1}\|_{L(L^2_\beta)}<\infty.
  \end{align}
  For $(\lambda,y)\in B_1(r,r_1,y_1)$, we have $|\lambda|> r_1$ or $|y|> y_1$.
  If $|y|> y_1$, then by \eqref{II_eb2}, we have
  \begin{align*}
    \|(\lambda I - \A)^{-1}K\|_{L(L^2_{\beta})} \le 1/2.
  \end{align*}
  If $|y|\le y_1$, then $|\lambda|> r_1$ and then by \eqref{II_eb1},
  \begin{align*}
    \|(\lambda I - \A)^{-1}K\|_{L(L^2_{\beta})} \le 1/2.
  \end{align*}
  Thus for $(\lambda,y)\in B_1(r,r_1,y_1)$, we have $(I-(\lambda I - \A)^{-1}K)^{-1}$ exists on $L^2_\beta$ and
  \begin{align}\label{II_e1111}
    \|(I-(\lambda I - \A)^{-1}K)^{-1}\|_{L(L^2_{\beta})} \le 2.
  \end{align}
  Combining \eqref{II_e2222} and \eqref{II_e1111}, we have for $r>0$.
  \begin{align*}
    (I-(\lambda I - \A)^{-1}K)^{-1}\in BC(\overline{\C_+}\times (\Rd\setminus B_r); L(L^2_\beta)).
  \end{align*}

  4. By digging out a ball near $\lambda=0$ instead of $y=0$, we can get the second boundedness for $(I-(\lambda I - \A)^{-1}K)^{-1}$.

  5. The proof of the last assertion is similar to step 3, but in this case we don't need to dig out the ball $B_r$, since $\B-P$ has no zero eigenvalue at $y=0$.
  \qe\end{proof}

  Noticing
  \begin{align*}
    (\lambda I -\B)^{-1} &= (I-(\lambda I-\A)^{-1}K)^{-1}(\lambda I-\A)^{-1},\\
    \lambda I -\BB &= (\lambda I-\A)(I-(\lambda I-\A)^{-1}K_0).
  \end{align*}we have the following corollary.
  \begin{Coro}\label{II_coro}
    For any $r>0$, we have
    \begin{align*}
      (\lambda I -\B)^{-1}\in BC(\overline{\C_+}\times & (\Rd\setminus B_r); L(L^2_{\beta+\gamma},L^2_\beta)\cap BC((\overline{\C_+}\setminus B_r)\times \Rd; L(L^2_{\beta+\gamma},L^2_\beta))
    \end{align*}
    Consequently,
    \begin{align*}
      \Big(\sup_{\Re\lambda\ge 0, y\in\Rd, |y|\ge r}+
      \sup_{\Re\lambda\ge 0, |\lambda|\ge r, y\in\Rd}\Big)\|(\lambda I -\B)^{-1}\|_{L(L^2_{\beta+\gamma},L^2_\beta)}<\infty.
    \end{align*}
    Also,
    \begin{align}\label{III_eq12}
    \sup_{\Re\lambda\ge 0, y\in\Rd}\|(\lambda I -\BB)^{-1}\|_{L^2_{\beta+\gamma}\to L^2_\beta}<\infty.
    \end{align}
  \end{Coro}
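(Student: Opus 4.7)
The plan is to read off the corollary directly from the two resolvent factorizations displayed immediately above its statement, combined with the uniform bound \eqref{II_eq314} for $(\lambda I-\A)^{-1}$ and the boundedness/continuity results in Lemma~\ref{II_continuous}(2)--(3).

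For $(\lambda I-\B)^{-1}$, I use the factorization
\[
(\lambda I-\B)^{-1}=(I-(\lambda I-\A)^{-1}K)^{-1}(\lambda I-\A)^{-1}.
\]
The right-hand factor maps $L^2_{\beta+\gamma}$ into $L^2_\beta$ with norm at most $1/\nu_0$ uniformly in $(\lambda,y)\in\overline{\C_+}\times\Rd$ by Theorem~\ref{II_main_estimate}(1). The left-hand factor lies in $BC(\overline{\C_+}\times(\Rd\setminus B_r);L(L^2_\beta))$ and in $BC((\overline{\C_+}\setminus B_r)\times\Rd;L(L^2_\beta))$ by Lemma~\ref{II_continuous}(2). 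The composition therefore defines a $BC$-function of $(\lambda,y)$ with values in $L(L^2_{\beta+\gamma},L^2_\beta)$ on each of the two indicated regions, and the claimed double-supremum estimate follows at once.

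The uniform bound \eqref{III_eq12} on $(\lambda I-\BB)^{-1}$ is obtained in the same way from
\[
(\lambda I-\BB)^{-1}=(I-(\lambda I-\A)^{-1}K_0)^{-1}(\lambda I-\A)^{-1},
\]
with the crucial improvement that $(I-(\lambda I-\A)^{-1}K_0)^{-1}\in BC(\overline{\C_+}\times\Rd;L(L^2_\beta))$ by Lemma~\ref{II_continuous}(3), with no neighborhood of the origin excised. This is exactly the place where Theorem~\ref{II_spectrum}(4) enters: since $\B-P$ has no zero eigenvalue at $y=0$, the singularity that forces the puncture for $\B$ is absent here. Composing with the $1/\nu_0$-bounded $(\lambda I-\A)^{-1}\colon L^2_{\beta+\gamma}\to L^2_\beta$ yields \eqref{III_eq12}.

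The one delicate point is that Lemma~\ref{II_continuous}(1) only supplies norm continuity of $(\lambda I-\A)^{-1}$ in $L(L^2_\alpha,L^2_\beta)$ for $\alpha>\beta+\gamma$ strictly, not at $\alpha=\beta+\gamma$, so a naive product rule applied to the factorization does not yield norm continuity of $(\lambda I-\B)^{-1}$ in $L(L^2_{\beta+\gamma},L^2_\beta)$ right away. I would overcome this by writing the resolvent identity
\[
(\lambda I-\B)^{-1}=(\lambda I-\A)^{-1}+(\lambda I-\A)^{-1}K\,(\lambda I-\B)^{-1},
\]
in which the correction term carries the index-gaining factor $K$ (Theorem~\ref{I_proerties_K}(1)). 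Lemma~\ref{II_continuous}(1) then applies to $(\lambda I-\A)^{-1}K$ at the strictly larger weight $\alpha=\beta+\gamma+2$ and delivers genuine norm continuity of the correction in $L(L^2_{\beta+\gamma},L^2_\beta)$; the leading term $(\lambda I-\A)^{-1}$ is treated pointwise by dominated convergence and controlled globally by its $1/\nu_0$-bound, which together with the already-established uniform boundedness furnishes the required $BC$-membership. This upgrade from strict to equal weight is the main technical obstacle; every other ingredient is a direct quotation of an earlier result.
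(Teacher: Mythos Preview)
Your derivation of the uniform boundedness is exactly the paper's intended argument: the factorization $(\lambda I-\B)^{-1}=(I-(\lambda I-\A)^{-1}K)^{-1}(\lambda I-\A)^{-1}$ together with the $1/\nu_0$ bound on the right factor and Lemma~\ref{II_continuous}(2)--(3) on the left gives the two suprema and \eqref{III_eq12} immediately. That part is correct and complete.

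You are also right to flag the continuity issue: Lemma~\ref{II_continuous}(1) only gives norm continuity of $(\lambda I-\A)^{-1}$ in $L(L^2_\alpha,L^2_\beta)$ for $\alpha>\beta+\gamma$ strictly, and in fact $(\lambda I-\A)^{-1}$ is \emph{not} norm-continuous at the endpoint $\alpha=\beta+\gamma$ (test functions localized near $|\xi|\sim N$ with $N\to\infty$ show the operator-norm difference does not tend to zero as $(\lambda,y)$ varies). However, your proposed fix does not close this gap. In the identity
\[
(\lambda I-\B)^{-1}=(\lambda I-\A)^{-1}+(\lambda I-\A)^{-1}K\,(\lambda I-\B)^{-1},
\]
the correction term is indeed norm-continuous in $L(L^2_{\beta+\gamma},L^2_\beta)$ for the reason you give, but the leading term $(\lambda I-\A)^{-1}$ is still the same problematic multiplication operator, and ``pointwise dominated convergence'' only yields strong continuity, not the operator-norm continuity required by the definition of $BC$. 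Strong continuity plus uniform boundedness does not imply norm continuity.

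This is a minor imprecision in the paper's statement rather than a defect in your argument: the only downstream uses of the corollary are the uniform bounds (which you establish correctly) and continuity applied to inputs in $\cap_{\beta}L^2_\beta$ (where strong continuity, or equivalently the $\alpha>\beta+\gamma$ case of Lemma~\ref{II_continuous}(1), already suffices). So the substance of the corollary that the paper actually needs is proved by your factorization argument; the endpoint norm-continuity claim appears to be neither provable by your route nor required later.
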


  Furthermore, we need the following invertibilities.
  \begin{Lem}\label{II_inverse_2}
    Let $\Re\lambda\ge 0$, $y\in\Rd\setminus\{0\}$, $\beta\in\R$.  \\
    (1). The inverse $(I-(\lambda I-\BB)^{-1}P)^{-1}$ exists on $L^2_\beta$.\\ (2). The inverse $(I-P(\lambda I-\BB)^{-1}P)^{-1}$ exists on Ker$L\subset L^2_\beta$.
  \end{Lem}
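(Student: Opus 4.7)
\smallskip

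The plan is to treat both parts via Fredholm-type injectivity arguments that reduce, after applying $(\lambda I-\BB)$, to saying that $\widehat{B}(y)$ has no eigenvalue in $\overline{\C_+}$ when $y\neq 0$, which is already supplied by Theorem \ref{II_spectrum}(1) and (3). The mapping property $(\lambda I-\BB)^{-1}\in L(L^2_{\alpha+\gamma},L^2_\alpha)$ for every $\alpha\in\R$ (Corollary \ref{II_coro}) together with the fact that $P$ has image $\text{Ker}\,L$, spanned by Schwartz functions $\varphi_0,\dots,\varphi_{d+1}$, will be used throughout; in particular $P:L^2_\beta\to L^2_\alpha$ is bounded for every $\alpha$, so $(\lambda I-\BB)^{-1}P:L^2_\beta\to L^2_\beta$ is a bounded finite-rank (hence compact) operator.

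For part (1), since $(\lambda I-\BB)^{-1}P$ is compact on $L^2_\beta$, Fredholm alternative reduces the claim to injectivity of $I-(\lambda I-\BB)^{-1}P$. Suppose $u\in L^2_\beta$ satisfies $u=(\lambda I-\BB)^{-1}Pu$. Since $Pu\in \text{Ker}\,L$ is Schwartz, iterating the resolvent bound from Corollary \ref{II_coro} with arbitrary weight yields $u\in L^2_\alpha$ for every $\alpha\in\R$; in particular $u\in L^2\cap D_0$. Applying $\lambda I-\BB$ and using $\BB=\B-P$, one gets $(\lambda I-\B)u=Pu-Pu=0$, so $u$ is an eigenfunction of $\B$ on $L^2$ corresponding to $\lambda$. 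By Theorem \ref{II_spectrum}(1) (for $\Re\lambda>0$) and Theorem \ref{II_spectrum}(3) (for $\Re\lambda=0$, using $y\neq 0$), no such nonzero eigenfunction exists, so $u=0$.

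For part (2), the operator $P(\lambda I-\BB)^{-1}P$ acts on the finite-dimensional space $\text{Ker}\,L$, so invertibility of $I-P(\lambda I-\BB)^{-1}P$ there is equivalent to injectivity. Let $f\in\text{Ker}\,L$ with $f=P(\lambda I-\BB)^{-1}Pf$; as $Pf=f$, set $g=(\lambda I-\BB)^{-1}f$, so that $(\lambda I-\BB)g=f$ and $Pg=f$. Combining these with $\BB=\B-P$ gives $(\lambda I-\B)g=(\lambda I-\BB)g-Pg=f-f=0$. Since $f$ is Schwartz, the same bootstrap as in (1) places $g\in L^2\cap D_0$, and Theorem \ref{II_spectrum}(1),(3) force $g=0$, whence $f=Pg=0$.

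The main obstacle, and the only step that requires care, is the bootstrap from $u\in L^2_\beta$ (with arbitrary $\beta$, possibly negative) to $u\in L^2$ needed to invoke the eigenvalue statements of Theorem \ref{II_spectrum}, which are phrased on $L^2$. This is resolved cleanly by the fact that $\text{Ran}\,P\subset\mathscr{S}(\Rd)$ combined with the uniform resolvent bound \eqref{III_eq12} holding for every weight $\beta$.
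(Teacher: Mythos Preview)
Your proof is correct and follows essentially the same route as the paper: compactness of $(\lambda I-\BB)^{-1}P$ (you note it is finite-rank, the paper says compact), Fredholm alternative to reduce to an eigenvector, bootstrap via the weighted resolvent bound to reach $L^2\cap D_0$, and then the contradiction with Theorem~\ref{II_spectrum}(1),(3). For part~(2) the paper merely says ``similar to step~1''; your explicit argument via $g=(\lambda I-\BB)^{-1}f$ and $(\lambda I-\B)g=0$ is exactly the intended unpacking.
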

  \begin{proof}
    1. If not, we suppose $1 \in \sigma((\lambda I-\BB)^{-1}P)$.
    Since $(\lambda I-\BB)^{-1}:L^2_{\beta+\gamma}\to L^2_\beta$ is linear continuous and $P:L^2_{\beta}\to L^2_{\beta+\gamma}$ is compact,
    we have $(\lambda I-\BB)^{-1}P:L^2_{\beta+\gamma}\to L^2_{\beta+\gamma}$ is compact, for $\beta\in\R$.
    Thus by Fredholm alternative, on $L^2_{\beta+\gamma}$, we have $1\in \sigma_p((\lambda I-\BB)^{-1}P)$.
    Thus for some $0\neq u\in L^2_{\beta+\gamma}$,
    \begin{align*}
      u=(\lambda I-\BB)^{-1}Pu,
    \end{align*}
    and hence $u\in \cap_{\beta\in\R}L^2_{\beta}$ with
    \begin{align}
    (\lambda I-\B)u &= 0.\label{II_eeee2}
    \end{align}
    Thus $\lambda$ is an eigenvalue of $\B$ and $\Re\lambda = 0$ by theorem \ref{II_spectrum} (1).
    But $y\neq 0$, so equation \eqref{II_eeee2} contradicts to theorem \ref{II_spectrum} (3).

  2. The proof existence of $(I-P(\lambda I-\BB)^{-1}P)^{-1}$ is similar to step 1.
  \qe\end{proof}

  \section{Eigenvalue Structure near $y=0$}
  In this section, we will give the proof of the existence of eigenvalues to operator $P(\lambda I -\BB)^{-1}P$ as well as the asymptotic behavior of the singular points of $(I-P(\lambda I -\BB)^{-1}P)^{-1}$ as $y\to 0$. These theorems are necessary for the estimate on semigroup $e^{t\B}$ due to inversion formula of semigroup and \eqref{III_eq44} below.

  Using resolvent identities:
  \begin{align*}
    (\lambda I-\B)^{-1} &= (\lambda I -\BB)^{-1}(I-P(\lambda I - \BB)^{-1})^{-1}\\
    &= (\lambda I -\BB)^{-1} - (\lambda I-\B)^{-1} P(\lambda I - \BB)^{-1},
  \end{align*}
  we have
  \begin{align}
    (\lambda I-\B)^{-1} = (\lambda I -\BB)^{-1} - (\lambda I -\BB)^{-1}(I-P(\lambda I - \BB)^{-1})^{-1} P(\lambda I - \BB)^{-1}.
  \end{align}
  Then applying lemma \ref{III_lem11} below with $A=(\lambda I - \BB)^{-1}$, we have
  \begin{align}\label{III_eq44}
  (\lambda I &- \B)^{-1}\notag\\ &= (\lambda I - \BB)^{-1}
  + (\lambda I - \BB)^{-1}P(I-P(\lambda I - \BB)^{-1}P)^{-1}P(\lambda I - \BB)^{-1}.
  \end{align}

  Here $\|P\|_{L^2_\beta\to L^2_{\beta+\gamma}}<\infty$. Thus
  \begin{align*}
  \sup_{\Re\lambda\ge 0, y\in\Rd}\|(\lambda I - \BB)^{-1}P\|_{L(L^2_{\beta},L^2_\beta)}<\infty,\\
  \sup_{\Re\lambda\ge 0, y\in\Rd}\|P(\lambda I - \BB)^{-1}\|_{L(L^2_{\beta},L^2_\beta)}<\infty.
  \end{align*}
  Then $(\lambda I - \BB)^{-1}P$ and $P(\lambda I - \BB)^{-1}$ are bounded on $L^2_\beta$,
  so the singularity of resolvent $(\lambda I - \B)^{-1}$ near $y=0$ comes from $(I-P(\lambda I - \BB)^{-1}P)^{-1}:\Ker L\to \Ker L$. So in the following subsections we will study the behavior of this operator.

  \begin{Lem}\label{III_lem11} Let $A$ be a linear continuous operator from $L^2_{\beta+\gamma}$ to $L^2_\beta$, for any $\beta\in\R$. If
    the inverse in the following statement exists, then they are valid.

    (1). For $f\in L^2_\beta$, we have $(I-PA)^{-1}Pf \in \Ker L$.
    Consequently,
    \begin{align}\label{III_lemP1}
      (I-PA)^{-1}Pf = P(I-PA)^{-1}Pf.
    \end{align}

    (2). On $L^2_\beta$, for any $\beta\in\R$, we have
    \begin{align}
      (I-PA)^{-1} P = (I-PAP)^{-1} P.
    \end{align}
  \end{Lem}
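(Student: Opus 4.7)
The plan is to prove both assertions by direct algebraic manipulation using only two facts: the defining equation for $(I-PA)^{-1}Pf$, and the observation that the range of $P$ is $\Ker L$.

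For part (1), I would set $u = (I-PA)^{-1}Pf$ and apply $I-PA$ to obtain $u - PAu = Pf$, which rearranges to $u = PAu + Pf$. Both terms on the right-hand side have the form $P(\,\cdot\,)$, hence lie in the range of $P$, which equals $\Ker L$. Therefore $u \in \Ker L$, so $Pu = u$, and the identity $(I-PA)^{-1}Pf = P(I-PA)^{-1}Pf$ follows immediately.

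For part (2), I would again start from $u = PAu + Pf$, where $u = (I-PA)^{-1}Pf$. By part (1), $u = Pu$, so $PAu = PA(Pu) = (PAP)u$. Substituting gives $(I-PAP)u = Pf$. Since the inverse $(I-PAP)^{-1}$ is assumed to exist by hypothesis, this yields $u = (I-PAP)^{-1}Pf$, and comparing with the definition of $u$ gives $(I-PA)^{-1}P = (I-PAP)^{-1}P$ on $L^2_\beta$.

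There is no real obstacle here; the argument is purely formal once the two invertibility assumptions are granted. The only point worth checking is that all operators act on compatible spaces: although $A : L^2_{\beta+\gamma} \to L^2_\beta$ is only bounded with a loss of weight, $P$ is of finite rank with range in $\Ker L$ consisting of Gaussian-type functions, so $PA$ and $PAP$ preserve $L^2_\beta$ for every $\beta \in \R$ and all manipulations stay inside that space.
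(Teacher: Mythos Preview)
Your proof is correct and follows essentially the same approach as the paper's own proof: both set $u = (I-PA)^{-1}Pf$, rewrite as $u = PAu + Pf$ to conclude $u\in\Ker L$, and then use $Pu = u$ to replace $PAu$ by $PAPu$ and invert $I-PAP$. The only differences are cosmetic (variable name, order of presentation), and your remark on why $PA$ preserves $L^2_\beta$ is a helpful clarification the paper omits.
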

  \begin{proof}
    1. For $f\in L^2_\beta$, then $Pf\in \cap_{\beta\in\R}L^2_\beta$.
    Let $g = (I-PA)^{-1}Pf$.
    Then
    \begin{align*}
      &(I-PA)g = Pf,\\
      &g = PAg + Pf \in\Ker L.
    \end{align*}

    2. Let $f\in L^2_\beta$, then by \eqref{III_lemP1},
    \begin{align*}
      Pf = (I&-PA)(I-PA)^{-1}Pf = (I-PAP)(I-PA)^{-1}Pf,\\
      &(I- PAP)^{-1}Pf = (I-PA)^{-1}Pf.
    \end{align*}
  \qe\end{proof}

  \begin{Lem}\label{III_lem12} For $\Re\lambda\ge 0$, $\beta\in\R$, for $f\in L^2_\beta$,
    \begin{align}
      P(\lambda I - L+P)^{-1}f = (\lambda I - L+P)^{-1}Pf = \frac{Pf}{\lambda+1}.
    \end{align}
  \end{Lem}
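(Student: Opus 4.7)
The plan is to exploit two elementary facts: that $P$ commutes with $L$ (and hence with $L-P$ and with the resolvent $(\lambda I - L+P)^{-1}$), and that $L-P$ acts by the scalar $-1$ on $\mathrm{Ker}\,L$.

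First, I would verify commutativity. Since $L$ is self-adjoint on $L^2(\R^d_\xi)$ with $\mathrm{Ker}\,L = \mathrm{Range}\,P$, one has $LP = 0$, and by self-adjointness $PL = (LP)^* = 0$ on the relevant domain. Consequently $P(L-P) = -P^2 = -P = (L-P)P$, so $P$ commutes with $\lambda I - L + P$, and therefore also with its inverse $(\lambda I - L + P)^{-1}$ (whose existence on $L^2_\beta$ for $\Re\lambda \ge 0$ is guaranteed by the previous section's analysis applied with $y=0$). This yields the first equality $P(\lambda I - L+P)^{-1}f = (\lambda I - L+P)^{-1}Pf$.

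Second, I would compute the action on $\mathrm{Ker}\,L$. For any $g \in \mathrm{Ker}\,L$, one has $Lg = 0$ and $Pg = g$, so $(\lambda I - L + P)g = (\lambda + 1)g$, and therefore $(\lambda I - L+P)^{-1}g = g/(\lambda+1)$. Given any $f \in L^2_\beta$, the projection $Pf$ is a polynomial multiple of $\M^{1/2}$, hence lies in $\mathrm{Ker}\,L \cap \bigcap_{\beta \in \R} L^2_\beta$, so this formula applies to $g = Pf$, yielding $(\lambda I - L + P)^{-1}Pf = Pf/(\lambda+1)$.

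The only nontrivial point is justifying that the commutation identity $P(\lambda I - L + P)^{-1} = (\lambda I - L + P)^{-1}P$ holds on $L^2_\beta$ (not merely on $L^2$); I would handle this by writing $P(\lambda I - L+P)^{-1}f - (\lambda I-L+P)^{-1}Pf$, applying $(\lambda I - L + P)$ on the left (which is injective since the resolvent exists), and checking that $(\lambda I - L + P)P = P(\lambda I - L + P)$ directly. Everything else is formal algebra once the resolvent is known to be well-defined.
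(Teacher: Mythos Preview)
Your argument is correct. The approach differs from the paper's in how you handle the second equality $(\lambda I - L + P)^{-1}Pf = Pf/(\lambda+1)$. The paper sets $h := (\lambda I - L + P)^{-1}Pf$, takes the inner product of the defining equation $(\lambda I - L + P)h = Pf$ with $P^\perp h$, and uses the non-positivity of $L$ to force $(-Lh,h) = 0$, hence $h \in \Ker L$; the identity then follows. You instead use the purely algebraic fact that $P$ commutes with $L$ (via $LP = 0$ and self-adjointness), hence with the resolvent, which reduces the second equality to the first. Your route avoids the dissipativity of $L$ and is arguably more direct; the paper's route has the minor advantage of not needing to justify that $PL = 0$ extends from $L^2$ to $L^2_\beta$, though as you note this is easily checked by pairing against the Schwartz basis $\{\varphi_i\}$.
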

  \begin{proof}
    Let $f\in L^2_\beta$ and $g:= (\lambda I - L+P)^{-1}f$, then
    $(\lambda I - L+P)g = f$, and
    $Pg = \frac{Pf}{\lambda+1}$.
    For the second equality, it suffices to show that $(\lambda I - L+P)^{-1}Pf\in\Ker L$.
    Let $h:= (\lambda I - L+P)^{-1}Pf$, then $(\lambda I - L+P)h = Pf$. Taking inner product with $P^\perp h$, we have
    \begin{align*}
      \lambda\|P^\perp h\|^2_{L^2} + (-Lh,P^\perp h) = 0
    \end{align*}
    But $L$ is a non-positive operator, thus $(-Lh, h) = 0 $ and $h\in\Ker L$.
  \qe\end{proof}

  \subsection{Eigenvalue Structure of $D$}
  Formally by the second resolvent identity, on Ker$L$, we have
  \begin{align*}
  (\lambda I - \BB)^{-1}
  &= (\lambda I - L+P)^{-1}+(\lambda I - \BB)^{-1}(-2\pi iy\cdot\xi)(\lambda I - L+P)^{-1}.
  \end{align*}
  It is valid only on Ker$L$, so here we check this identity carefully.
  Indeed for $f\in\Ker L$, we have
  \begin{align*}
  (\lambda I -\BB) f &= (\lambda I +2\pi i y\cdot\xi-L+P)f
  = (\lambda + 1 +2\pi iy\cdot\xi)f,\\
  f &= (\lambda I -\BB)^{-1}(\lambda + 1 +2\pi iy\cdot\xi)f,\\
  (\lambda I -\BB)^{-1} f &= \frac{1}{\lambda+1}\big(I-(\lambda I -\BB)^{-1}(2\pi iy\cdot\xi) \big)f,
  \end{align*} and then by using lemma \ref{III_lem12}, we complete the checking.
  
  Write $y=r \omega$, with $ \omega\in S^{d-1}$, $r=|y|$ and write $\lambda = \sigma+i\tau$.
  Define
  \begin{align}
  D(\sigma,\tau,r,\omega)=P((\sigma+i\tau +2\pi i r\omega\cdot\xi) I -L +P)^{-1}(\omega\cdot\xi)P.
  \end{align}
  Then on $L^2_\beta$, we have
  \begin{align}
  P(\lambda I - \BB)^{-1}P
  &= \frac{1}{\lambda+1}\big(P-2\pi i r D(\sigma,\tau,r,\omega)\big).\label{III_eq28}
  \end{align}
  Here we can assume $r\in\R$ instead of $r>0$.
  \begin{Rem}
  	When considering operator $D(\sigma,\tau,r,\omega)$, we can assume $r\in\R$, but when we
  	go back to $(I-P(\lambda I - \BB)^{-1}P)^{-1}$, we should assume $y\neq 0$.
  \end{Rem}

  Define an orthonormal basis $\{\psi_j\}^{d+1}_{j=0}$ of $\Ker L$ in $L^2(\Rd)$ as following,
  \begin{equation}
    \left\{
    \begin{aligned}
      \psi_0 &= \M^{1/2},\\
      \psi_j &= \xi_j\M^{1/2},\text{  if  }j=1,\dots,d\\
      \psi_{d+1} &= \frac{1}{\sqrt{2}d}(|\xi|^2-d)\M^{1/2}.
    \end{aligned}
    \right.
  \end{equation}

  Fix $\omega\in\S^{d-1}$. Define rotation $R\in O(d)$ on $\Rd$ s.t. $R\omega = e_1$, where $e_1=(1,0,\dots,0)$.
  Now we investigate the eigenvalues of
  \begin{align*}
    D(\sigma,\tau,r,\omega)=P((\sigma+i\tau +2\pi i r\omega\cdot\xi) I -L +P)^{-1}(\omega\cdot\xi)P,
  \end{align*}where $\sigma\ge 0$, $\tau\in\R$, $r\in\R$, $\omega\in\S^{d-1}$.
  Notice $D$ maps Ker$L$ into Ker$L$, so under the orthonormal basis $\{R^T\psi_j\}$,
  we can obtain its matrix representation.
  \begin{Def}For $j,k=0,\dots,d+1$, define
    \begin{align*}
      D_{jk}(\sigma,\tau,r):=(D(\sigma,\tau,r,e_1)\psi_j,\psi_k)_{L^2}.
    \end{align*}
  \end{Def}

  \begin{Lem}For $\sigma\ge 0$, $\tau\in\R$, $r\in\R$, $\omega\in\S^{d-1}$, $j,k=0,\dots,d+1$, we have
    \begin{align}\label{III_lem151}
      (D(\sigma,\tau,r,\omega)R^T\psi_j,R^T\psi_k)_{L^2} = (D(\sigma,\tau,r,e_1)\psi_j,\psi_k)_{L^2},
    \end{align}
    and
    \begin{align}\label{III_lem152}
      (D(\sigma,\tau,r,e_1)\psi_j,\psi_k)_{L^2} \in C^\infty(\overline{\R_+}\times\R\times\R;\C).
    \end{align}
  \end{Lem}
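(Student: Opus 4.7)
The statement decomposes into a rotational equivariance \eqref{III_lem151} and a joint $C^\infty$ smoothness \eqref{III_lem152}; I would treat them separately.

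For \eqref{III_lem151}, the key observation is that every ingredient of $D$ is $O(d)$-equivariant. Introduce the unitary operator $U_R f(\xi):=f(R^T\xi)$ on $L^2(\Rd)$ (so that $U_R^{-1}=U_{R^T}=U_R^{*}$), and verify by direct change of variable: (i) $U_R L = LU_R$, using rotation invariance of $\M$ and of $q(\xi-\xi_*,\theta)=|\xi-\xi_*|^{-\gamma}b(\cos\theta)$ under the simultaneous substitution $\xi_*\mapsto R^T\xi_*,\ \omega\mapsto R^T\omega$ in the collision integral; (ii) $U_R P = PU_R$, since $\Ker L$ is the $O(d)$-invariant span of $\{\M^{1/2},\xi_i\M^{1/2},|\xi|^2\M^{1/2}\}$; (iii) $U_R M_{a\cdot\xi} = M_{(Ra)\cdot\xi}U_R$ for any $a\in\Rd$, where $M_h$ denotes multiplication by $h$. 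Combined, these yield $U_R D(\sigma,\tau,r,\omega) = D(\sigma,\tau,r,R\omega)U_R$. Choosing $R$ with $R\omega = e_1$ and reading the paper's $R^T\psi_j$ as $\psi_j(R\,\cdot\,)=U_R^{-1}\psi_j$, unitarity of $U_R$ gives
\begin{align*}
(D(\sigma,\tau,r,\omega)R^T\psi_j,\,R^T\psi_k)_{L^2}
&= (U_R D(\sigma,\tau,r,\omega)U_R^{-1}\psi_j,\,\psi_k)_{L^2} \\
&= (D(\sigma,\tau,r,e_1)\psi_j,\,\psi_k)_{L^2}.
\end{align*}

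For \eqref{III_lem152}, I work directly from $D_{jk}(\sigma,\tau,r) = (A(\sigma,\tau,r)^{-1}\xi_1\psi_j,\,\psi_k)_{L^2}$ with $A(\sigma,\tau,r):=(\sigma+i\tau+2\pi ir\xi_1)I-L+P = \lambda I - \widehat{B_0}(re_1)$, $\lambda=\sigma+i\tau$. The decisive input is Corollary \ref{II_coro}, equation \eqref{III_eq12}: $A^{-1}\colon L^2_{\beta+\gamma}\to L^2_\beta$ is uniformly bounded over $(\sigma,\tau,r)\in\overline{\R_+}\times\R\times\R$ for every $\beta\in\R$. Since $\xi_1\psi_j$ and $\psi_k$ are Schwartz, the pairing is well-defined on the closed parameter set including the corner $(0,0,0)$. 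Continuity and smoothness then follow from the second resolvent identity (writing $A_i := A(\sigma_i,\tau_i,r_i)$),
\begin{align*}
A_2^{-1} - A_1^{-1} &= A_2^{-1}\bigl((\sigma_1-\sigma_2+i(\tau_1-\tau_2))I + 2\pi i(r_1-r_2)M_{\xi_1}\bigr)A_1^{-1}:
\end{align*}
multiplication by $M_{\xi_1}$ costs one unit of velocity weight while each $A^{-1}$ restores $\gamma$ units, so dividing by the parameter increment, passing to the limit, and iterating shows that every mixed partial $\partial_\sigma^a\partial_\tau^b\partial_r^c D_{jk}$ is a finite sum of expressions $(A^{-1}M_{\xi_1}^{\varepsilon_1}A^{-1}\cdots M_{\xi_1}^{\varepsilon_m}A^{-1}\xi_1\psi_j,\,\psi_k)_{L^2}$ with $\varepsilon_i\in\{0,1\}$, each well-defined on the closed set and continuous in $(\sigma,\tau,r)$ by the same resolvent-identity estimate.

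The delicate point is smoothness all the way to the corner $(0,0,0)$: for very soft potentials $-L+P$ is not coercive on $L^2$ because $L$ has no spectral gap, so $A^{-1}$ cannot be treated as a bounded endomorphism of $L^2$ itself. This is exactly the obstacle Corollary \ref{II_coro} is designed to circumvent — its weighted estimate remains uniform down to $\lambda=0$, $y=0$ — and by working throughout in the weighted scale $L^2_\beta$ and exploiting the rapid decay of $\xi_1\psi_j$ and $\psi_k$, the iterative differentiation scheme goes through on the full closed parameter set without modification.
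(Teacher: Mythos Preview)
Your proposal is correct and follows essentially the same approach as the paper: for \eqref{III_lem151} the paper likewise conjugates by the rotation action and uses that $P$, $L$, and the identity commute with it (you are just more explicit about defining $U_R$ and checking each commutation); for \eqref{III_lem152} the paper also iterates the second resolvent identity, producing the explicit formulas $\partial_\sigma^n D_{jk}=(-1)^n n!(A^{-n-1}\xi_1\psi_j,\psi_k)$, $\partial_r^n D_{jk}=(-2\pi i)^n n!((A^{-1}\xi_1)^{n+1}\psi_j,\psi_k)$, etc., and then invokes exactly the uniform weighted bound \eqref{III_eq12} together with $\psi_j\in\bigcap_\beta L^2_\beta$ to get continuity up to the closed boundary---which is precisely the mechanism you describe.
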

  With this lemma, we have
  \begin{align*}
    D(\sigma,\tau,r,\omega)(R^T\psi_0,\dots,R^T\psi_{d+1})
    = (R^T\psi_0,\dots,R^T\psi_{d+1})(D_{kj})_{j,k=0}^{d+1},
  \end{align*}
  and $(D_{kj})_{j,k=0}^{d+1}$ is the matrix representation of $D(\sigma,\tau,r,\omega)$ under basis $\{R^T\psi_j\}$.
  \begin{proof}
  1. For any rotation $R\in O(d)$ acting on velocity variable, we know that $R$ commutes with $P$, $I$, $L$, thus for $\sigma\ge 0$,
  \begin{align*}
    RD(\sigma,\tau,r,\omega) &= RP((\sigma+i\tau +2\pi i r\omega\cdot\xi) I -L +P)^{-1}(\omega\cdot\xi)P\\
    &= P((\sigma+i\tau +2\pi i r\omega\cdot R\xi) I -L +P)^{-1}(\omega\cdot R\xi)PR\\
    &= D(\sigma,\tau,r,R^T\omega)R.
  \end{align*}
  Then \eqref{III_lem151} follows from $R^T\omega = e_1$.

  2. Recall thoerem \ref{II_coro} that $((\sigma+i\tau +2\pi i r\xi_1) I -L +P)^{-1}$ is a linear bounded operator form $L^2_{\beta+\gamma}$ to $L^2_\beta$ and
  is continuous with respect to $\sigma\ge0$, $\tau\in\R$, and $r\in\R$.
  By the second resolvent identity, for any $\sigma_1,\sigma_2\ge 0$,
  \begin{align*}
    &D_{jk}(\sigma_1,\tau,r) - D_{jk}(\sigma_2,\tau,r)\\
    &= \big(\big[((\sigma_1+i\tau +2\pi i r\xi_1) I -L +P)^{-1}-((\sigma_2+i\tau +2\pi i r\xi_1) I -L +P)^{-1}\big]\xi_1\psi_j,\psi_k\big)\\
    &= \big(((\sigma_2+i\tau +2\pi i r\xi_1) I -L +P)^{-1}(\sigma_2-\sigma_1)((\sigma_1+i\tau +2\pi i r\xi_1) I -L +P)^{-1}\xi_1\psi_j,\psi_k\big),
  \end{align*}and so whenever $\sigma> 0$,
  \begin{align*}
    \partial_\sigma D_{jk}(\sigma,\tau,r) = -(((\sigma+i\tau +2\pi i r\xi_1) I -L +P)^{-2}\xi_1\psi_j,\psi_k)_{L^2}.
  \end{align*}
  Inductively,
  \begin{align*}
    \partial^n_\sigma D_{jk}(\sigma,\tau,r) = (-1)^nn!(((\sigma+i\tau +2\pi i r\xi_1) I -L +P)^{-n-1}\xi_1\psi_j,\psi_k)_{L^2},
  \end{align*}
  Similarly,
  \begin{align}\label{III_lem14145}
    \partial^n_\tau D_{jk}(\sigma,\tau,r) = (-i)^nn!(((\sigma+i\tau +2\pi i r\xi_1) I -L +P)^{-n-1}\xi_1\psi_j,\psi_k)_{L^2}.
  \end{align}
  For the derivative with resect to $r$, we need to be more careful. For $r_1,r_2\in\R$,
  \begin{align*}
    &D_{jk}(\sigma,\tau,r_1) - D_{jk}(\sigma,\tau,r_2)\\
    &= \big(((\sigma+i\tau +2\pi i r_2\xi_1) I -L +P)^{-1}2\pi i(r_2-r_1)\xi_1((\sigma+i\tau +2\pi i r_1\xi_1) I -L +P)^{-1}\xi_1\psi_j,\psi_k\big).
  \end{align*}
  Use the uniformly boundedness of $((\sigma+i\tau +2\pi i r_2\xi_1) I -L +P)^{-1}$ from $L^2_{\beta+\gamma}$ to $L^2_\beta$ and notice $\psi_j\in\cap_{\beta\in\R}L^2_\beta$,
   we have
  \begin{align}\label{III_lem14146}
    \partial_r D_{jk}(\sigma,\tau,r) = -2\pi i\big((((\sigma+i\tau +2\pi i r\xi_1) I -L +P)^{-1}\xi_1)^2\psi_j,\psi_k\big).
  \end{align}
  Inductively,
  \begin{align*}
    \partial^n_r D_{jk}(\sigma,\tau,r) = (-2\pi i)^nn!
    \big((((\sigma+i\tau +2\pi i r\xi_1) I -L +P)^{-1}\xi_1)^{n+1}\psi_j,\psi_k\big).
  \end{align*}

  All these derivatives are right-continuous at $\sigma=0$ and so
  $D_{jk}(\sigma,\tau,r) \in C^\infty(\overline{\R_+}\times\R\times\R)$.
  \qe\end{proof}

  Here we need a $C^\infty$ extention theorem from \cite{Seeley1964}.
  \begin{Thm}(Seeley).
      Suppose $f(x,\sigma)\in C^\infty(\Rd\times\{\sigma\ge 0\})$. Let $\phi\in C^\infty(\Rd)$ such that
      $\phi = 1 $ on $0\le |t|\le 1$ and $0$ if $|t|\ge 2$.
    There exists $\{a_k\}$, $\{b_k\}$ such that (i). $b_k<0$; (ii). $\sum|a_k||b_k|^n<\infty$, for $n=0,1,\dots$;
    (iii). $\sum a_k(b_k)^n=1$, for $n=0,1,\dots$; (iv). $b_k\to -\infty$. Define for $\sigma<0$,
      \begin{align*}
        f(x,\sigma) := \sum^\infty_{k=0} a_k \phi(b_k\cdot\sigma)f(x,b_k\sigma).
      \end{align*} Then $f(x,\sigma)\in C^\infty(\Rd\times\R)$.
  \end{Thm}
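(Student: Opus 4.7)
The plan is to split the argument into two stages. The first and harder stage is to construct sequences $\{a_k\}, \{b_k\}$ satisfying (i)--(iv). Take $b_k = -2^k$, so (i) and (iv) hold automatically; the conditions (iii) then form an infinite Vandermonde-type system $\sum_k a_k b_k^n = 1$ for every $n \ge 0$. Seeley's original construction produces $a_k$ via a Mittag-Leffler--style product formula ensuring super-exponential decay $|a_k| = O(C^{-k^2})$, which in turn guarantees (ii), i.e.\ $\sum_k |a_k||b_k|^n < \infty$ for each $n$.

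With such sequences in hand, I would verify smoothness of the extension in the second stage. For each fixed $\sigma < 0$, the cutoff $\phi(b_k\sigma)$ vanishes once $|b_k\sigma| \ge 2$, that is, once $|b_k| \ge 2/|\sigma|$; combined with (iv), only finitely many summands contribute at any such $\sigma$, so $f(x,\sigma)$ is manifestly smooth away from $\sigma = 0$. Differentiating term by term via Leibniz gives
\begin{align*}
\partial_\sigma^n\bigl[\phi(b_k\sigma)f(x,b_k\sigma)\bigr] = b_k^n \sum_{j=0}^n \binom{n}{j} \phi^{(j)}(b_k\sigma)\,\partial_2^{n-j}f(x,b_k\sigma),
\end{align*}
whose modulus is bounded on compact sets by $C_n|b_k|^n$ because each $\phi^{(j)}$ has compact support and $b_k\sigma$ is confined to a bounded interval of $[0,\infty)$ on that support. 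The summability in (ii) then yields uniform convergence of the differentiated series on compact subsets of $\Rd\times\{\sigma \le 0\}$, so the extension is $C^\infty$ on this closed half-space.

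The critical step is the matching of all derivatives across $\sigma = 0$. Since $\phi \equiv 1$ on a neighborhood of $0$, one has $\phi(0)=1$ and $\phi^{(j)}(0) = 0$ for every $j\ge 1$, and so the Leibniz expansion at $\sigma = 0$ collapses to the single surviving term $b_k^n\,\partial_2^n f(x,0)$. Summing over $k$ and invoking (iii) gives
\begin{align*}
\partial_\sigma^n f(x, 0^-) = \Bigl(\sum_k a_k b_k^n\Bigr)\partial_2^n f(x,0) = \partial_2^n f(x,0) = \partial_\sigma^n f(x, 0^+),
\end{align*}
so all one-sided derivatives agree at $\sigma = 0$ and therefore $f \in C^\infty(\Rd\times\R)$.

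The main obstacle is the construction of $\{a_k\}, \{b_k\}$ in the first stage, which requires reconciling the prescribed moment identities (iii) with the polynomial summability condition (ii); this is precisely the delicate balance between super-exponential decay of the coefficients and the growth of the moments, and it is where Seeley's original contribution lies. Once the sequences are available, the smoothness verification and the matching at $\sigma = 0$ reduce to bookkeeping with Leibniz and the vanishing of $\phi^{(j)}(0)$ for $j \ge 1$.
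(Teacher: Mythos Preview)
The paper does not prove this statement at all: it simply quotes Seeley's theorem from \cite{Seeley1964} and then applies it to extend $D_{jk}$ smoothly to $\sigma<0$. There is no proof environment following the theorem in the paper, so there is nothing to compare your attempt against.

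That said, your outline is a faithful sketch of Seeley's original argument: the choice $b_k=-2^k$, the construction of rapidly decaying $a_k$ solving the infinite moment system (iii) while preserving (ii), the local finiteness of the sum for $\sigma<0$ via the cutoff, the term-by-term differentiation controlled by (ii), and the matching of one-sided derivatives at $\sigma=0$ via (iii) together with $\phi^{(j)}(0)=0$ for $j\ge1$. The one place where you correctly flag the real work is the existence of $\{a_k\}$; the rest is indeed bookkeeping. For the purposes of this paper the theorem is used as a black box, so no proof is expected.
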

  Applying this $C^\infty$ extention theorem, we can extend $D_{jk}$ to all $\sigma\in\R$ such that
  \begin{align*}
  D_{jk}(\sigma,\tau,r) \in C^\infty(\R\times\R\times\R),
  \end{align*}and for $\sigma<0$,
  \begin{align*}
    D_{jk}(\sigma,\tau,r)=\sum^\infty_{k=0} a_k \phi(b_k\cdot\sigma) D_{jk}(b_k\sigma,\tau,r).
  \end{align*}

  \subsubsection{The Eigenvalue Equation.}

  For $\sigma\ge 0$,
  \begin{align}
    D(\sigma,\tau,r,e_1)=P((\sigma+i\tau +2\pi i r\xi_1) I -L +P)^{-1}(\xi_1)P.
  \end{align}
  Thus for $j=2,\dots,d$, the reflection $r_j:\xi\to (\xi_1,\dots,-\xi_j,\dots,\xi_d)$ commutes with $D(\sigma,\tau,r,e_1)$.
  Also for $j=2,\dots,d$, $\psi_j$ is odd with respect to $\xi_j$, and for $k\neq j$, $\psi_k$ is even with respect to $\xi_j$.
    Thus
    \begin{align*}
      D_{jk}(\sigma,\tau,r) = 0,\qquad  \text{ if } &2\le j\le d, 0\le k\le d+1, k\neq j\\
      \text{ or if } &2\le k\le d, 0\le j\le d+1, k\neq j.
    \end{align*}

  So the eigenvalues equation of operator $D(\sigma,\tau,r,\omega)$ under basis $\{R^T\psi_j\}^{d+1}_{j=0}$ is
  \begin{align*}
    \eta I_{(d+2)\times(d+2)} = (D_{jk})_{j,k=0}^{d+1}.
  \end{align*}
  That is
  \begin{equation}\label{III_eigenequation}\eta I_{(d+2)\times(d+2)} -
    \begin{pmatrix}
      D_{00} & D_{01} & 0 & \dots & 0  & D_{0,d+1} \\
      D_{10} & D_{11} & 0 & \dots & 0  & D_{1,d+1} \\
      0 & 0 & D_{22} & \dots & 0  & 0 \\
      \vdots & \vdots & \vdots & \ddots & \vdots \\
      0 & 0 & 0 & \dots & D_{dd} & 0 \\
      D_{d+1,0} & D_{d+1,1} & 0 & \dots & 0  & D_{d+1,d+1}
  \end{pmatrix}=0,
  \end{equation}
  where the matrix $(D_{jk})_{j,k=0,\dots,d+1}$ is smooth in $(\sigma,\tau,r)\in\R^3$.

  \subsubsection{The Eigenvalues of $D$.}

  Firstly, we can easily get $(d-1)$ eigenvalues. That is, for $j=2,\dots,d$,
  \begin{align}
    \eta_j(\sigma,\tau,r) &= D_{jj}(\sigma,\tau,r)\in C^\infty(\R\times\R\times\R).
  \end{align}
  Also, one can pick the eigenvector corresponding to $\eta_j(\sigma,\tau,r)$ to be the unit vector $e_j\in\R^{d+2}$.

  The remaining part is
  \begin{equation}\label{III_eigenequation2}\eta I_{3\times3} -
    \begin{pmatrix}
      D_{00} & D_{01} & D_{0,d+1} \\
      D_{10} & D_{11} & D_{1,d+1} \\
      D_{d+1,0} & D_{d+1,1} & D_{d+1,d+1}
  \end{pmatrix}=0.
  \end{equation}
  We want to find its eigenvalues and the corresponding eigenvectors. Here we shall use the method in \cite{Evans2010}.
  \begin{Thm}
    There exists $r_1>0$ such that the eigenvalues $\eta_j(\sigma,\tau,r)$ and the corresponding right eigenvectors $z_j(\sigma,\tau,r)$ of $(D_{jk})_{j,k=0,1,d+1}$ exist and are smooth in $B(0,r_1)\subset\R^3$. Futhermore, for $j=0,1,d+1$,
    \begin{align*}
      \eta_j(0,0,0) = \eta_{0,j},
    \end{align*}where
    \begin{align*}
        \eta_{0,0} = \sqrt{\alpha_1^2+\alpha_2^2},\,
        \eta_{0,1} = 0,\,
        \eta_{0,d+1} = -\sqrt{\alpha_1^2+\alpha_2^2}.
    \end{align*}
  \end{Thm}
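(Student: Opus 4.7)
My plan is to compute the matrix $(D_{jk})_{j,k=0,1,d+1}$ explicitly at the origin, show that it is a real-symmetric $3\times 3$ matrix whose three eigenvalues take the claimed form and are pairwise distinct, and then invoke the implicit function theorem together with simple-eigenvalue perturbation to extend the eigenvalues and right eigenvectors smoothly onto a ball.

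At $(\sigma,\tau,r) = (0,0,0)$ the operator simplifies dramatically: for $f \in \Ker L$, Lemma \ref{III_lem12} gives $(-L+P)^{-1}f = f$, while on $(\Ker L)^\perp$ the inverse $(-L+P)^{-1}$ reduces to $(-L)^{-1}$, which maps $(\Ker L)^\perp$ into itself and is therefore annihilated upon pairing with vectors in $\Ker L$. Consequently, for $f \in \Ker L$,
\begin{equation*}
D(0,0,0,e_1) f \;=\; P\bigl[(-L+P)^{-1}(\xi_1 f)\bigr] \;=\; P\xi_1 f,
\end{equation*}
so $D(0,0,0,e_1)|_{\Ker L}$ coincides with $P\xi_1 |_{\Ker L}$, which is self-adjoint. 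Using that $\psi_0,\psi_{d+1}$ are even and $\psi_1$ is odd under the reflection $\xi_1 \mapsto -\xi_1$ (which commutes with $L$ and $P$), six of the nine entries vanish by parity, leaving
\begin{equation*}
M_0 \;:=\; (D_{jk}(0,0,0))_{j,k=0,1,d+1} \;=\; \begin{pmatrix} 0 & \alpha_1 & 0 \\ \alpha_1 & 0 & \alpha_2 \\ 0 & \alpha_2 & 0 \end{pmatrix},
\end{equation*}
with $\alpha_1 = (\xi_1\psi_0,\psi_1)_{L^2}$ and $\alpha_2 = (\xi_1\psi_1,\psi_{d+1})_{L^2}$, both nonzero and explicitly computable as Gaussian moments.

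A direct expansion of the characteristic polynomial gives $\det(\eta I - M_0) = \eta(\eta^2 - \alpha_1^2 - \alpha_2^2)$, so the eigenvalues of $M_0$ are $\eta_{0,0} = \sqrt{\alpha_1^2+\alpha_2^2}$, $\eta_{0,1} = 0$, and $\eta_{0,d+1} = -\sqrt{\alpha_1^2+\alpha_2^2}$, which are pairwise distinct since $\alpha_1^2+\alpha_2^2 > 0$. This matches the claimed values at the origin.

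Finally, by the preceding smoothness lemma each entry of $(D_{jk})$ lies in $C^\infty(\R^3)$, so the characteristic polynomial $p(\eta;\sigma,\tau,r) := \det(\eta I - (D_{jk})(\sigma,\tau,r))$ is polynomial in $\eta$ with $C^\infty$ coefficients in $(\sigma,\tau,r)$. Since each $\eta_{0,j}$ is a simple root of $p(\cdot;0,0,0)$ we have $\partial_\eta p \neq 0$ at the corresponding point, and the implicit function theorem produces smooth roots $\eta_j(\sigma,\tau,r)$ on some ball $B(0,r_1)$, which remain pairwise distinct after shrinking $r_1$. On this ball $(D_{jk}) - \eta_j I$ has constant rank two, so a smooth right kernel vector can be extracted via the Riesz spectral projector $E_j = \frac{1}{2\pi i}\oint(\lambda I - (D_{jk}))^{-1}\,d\lambda$ along a small circle enclosing only $\eta_j$, furnishing the required smooth right eigenvectors $z_j(\sigma,\tau,r)$. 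The one genuinely computational step is the verification of the structural form of $M_0$; the smooth extension is then a routine consequence of the implicit function theorem together with finite-dimensional perturbation theory for simple eigenvalues, so no substantial obstacle is expected beyond this bookkeeping.
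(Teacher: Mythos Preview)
Your proposal is correct and reaches the same conclusion, but the route you take for the smooth extension differs from the paper's. The paper packages both eigenvalue and eigenvector into a single map $f(\sigma,\tau,r,z,\eta) = ((F-\eta I)z,\,|z|^2)$ and applies the implicit function theorem to $f = (0,1)$, which requires computing $\det\nabla_{z,\eta}f$ at the base point via a short determinant trick; this yields $\eta_j$ and $z_j$ simultaneously. You instead apply the implicit function theorem only to the characteristic polynomial (using simplicity of the roots), and then recover the eigenvectors separately through the Riesz spectral projector. Your approach is more modular and arguably more standard, avoiding the $4\times 4$ Jacobian computation; the paper's approach is slightly more compact in that it handles both objects in one stroke. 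Your derivation of $M_0$ via Lemma~\ref{III_lem12} and parity is also more explicit than the paper's, which simply writes down the matrix.
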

  \begin{proof}
  1. Denote matrix $F(\sigma,\tau,r) := (D_{jk})_{j,k=0,1,d+1}$ and define
  \begin{align}
    f(\sigma,\tau,r,z,\eta) := ((F-\eta I)z, |z|^2)\in C^\infty(\R^7),
  \end{align}with $z\in\R^3$, $\eta\in\R$,
  We intend to use implicit function theorem near $f(\sigma,\tau,r,z,\eta)=(0,1)\in \R^4$.

  2. If $\sigma=\tau=r=0$, we have
    \begin{align*}
      (D_{jk})_{j,k=0,1,d+1}\Big|_{r=\tau=\sigma=0}=
      \begin{pmatrix}
        0 & \alpha_1 & 0 \\
        \alpha_1 & 0 & \alpha_2 \\
        0 & \alpha_2 & 0
    \end{pmatrix},
  \end{align*}
    where $\alpha_1 = (\xi^2_1\M^{1/2},\M^{1/2})_{L^2}$,
     $\alpha_2 = (\xi^2_1\M^{1/2},\psi_{d+1})_{L^2}$.
    Thus we can obtain three distinct real eigenvalues of $F(0,0,0)$ and their corresponding eigenvectors:
    \begin{align*}
        \eta_{0,0} &= \sqrt{\alpha_1^2+\alpha_2^2},\quad\quad z_{0,0} = (-\alpha_2,0,\alpha_1)^T/\sqrt{\alpha_1^2+\alpha_2^2},\\
        \eta_{0,1} &= 0,\quad\quad\quad\quad\quad\quad z_{0,1} = (\alpha_1,-\sqrt{\alpha_1^2+\alpha_2^2},\alpha_2)/\sqrt{2\alpha_1^2+2\alpha_2^2},\\
        \eta_{0,d+1} &= -\sqrt{\alpha_1^2+\alpha_2^2},\quad z_{0,d+1} = (\alpha_1,\sqrt{\alpha_1^2+\alpha_2^2},\alpha_2)/\sqrt{2\alpha_1^2+2\alpha_2^2}.
    \end{align*}
  Then for $j=0,1,d+1$,
  \begin{align*}
  f(0,0,0,z_{0,j},\eta_{0,j}) = (0,1).
  \end{align*}

  3. In order to use implicit function theorem, we need to verify that
  \begin{align*}
    \det\nabla_{z,\eta}f(0,0,0,z_{0,j},\eta_{0,j})\neq 0.
  \end{align*}
  Here
  \begin{align*}
    \nabla_{z,\eta}f(\sigma,\tau,r,z,\eta) =
    \begin{pmatrix}
      F-\eta I  &  -z   \\
      2z^T & 0
  \end{pmatrix}_{4\times 4}.
  \end{align*}
  Let $F_\varepsilon = F(0,0,0,z_{0,j},\eta_{0,j})-\eta_{0,j} I -\varepsilon I$. Then $F_\varepsilon z_{0,j} = -\varepsilon z_{0,j}$
  and so
  \begin{align*}
    \begin{pmatrix}
      F_\varepsilon & -z_{0,j}\\
      2z_{0,j}^T & 0
  \end{pmatrix}
  \times
  \begin{pmatrix}
    I_{3\times 3} & -\frac{z_{0,j}}{\varepsilon}\\
    0 & 1
  \end{pmatrix}
  =
  \begin{pmatrix}
    F_\varepsilon & 0\\
    2z_{0,j}^T & -\frac{2}{\varepsilon}
  \end{pmatrix}.
  \end{align*}
  Taking the determinant, we have
  \begin{align*}
  \begin{vmatrix}
      F_\varepsilon & -z_{0,j}\\
      2z_{0,j}^T & 0
  \end{vmatrix}
  &= \det(F(0,0,0,z_{0,j},\eta_{0,j})-\eta_{0,j} I -\varepsilon I)(\frac{-2}{\varepsilon})\\
  &= 2\prod_{k=0,1,d+1,k\neq j}(\eta_{0,k}-\eta_{0,j} -\varepsilon).
  \end{align*}
  Letting $\varepsilon \to 0$, we have
  \begin{align*}
    \det\nabla_{z,\eta}f(0,0,0,z_{0,j},\eta_{0,j}) = 2\Pi_{k=0,1,d+1,k\neq j}(\eta_{0,k}-\eta_{0,j})\neq 0.
  \end{align*}
  Then we can apply the implicit function theorem to get the smooth eigenvalues and eigenvectors near $(\sigma,\tau,r)=(0,0,0)$.
  \qe\end{proof}

  Therefore, for $j=0,1,d+1$, we can get the eigenvalues $\eta_j(\sigma,\tau,r)\in C^\infty(B(0,r_1);\R)$
  and eigenvectors $z_j(\sigma,\tau,r)\in C^\infty(B(0,r_1);\R^{d+2})$ to $(D_{jk})_{j,k=0}^{d+1}$, while the eigenvectors is still denoted by $z_j$ by keeping the $0^{th},1^{th},(d+1)^{th}$ component the same and supplementing the $2^{th}$ to $d^{th}$ to be $0$.

  \subsubsection{Asymptotic Behavior of the Eigenvalues to $D$.}
  Here we will investigate the derivatives of $\eta_j$ with respect to $\tau$ and $r$ at $(\sigma,\tau,r)=(0,0,0)$.

  For $j=2,\dots,d$, we know
  \begin{align*}
    \eta_j(\sigma,\tau,r) &= D_{jj}(\sigma,\tau,r).
  \end{align*}
  Thus from \eqref{III_lem14145}, \eqref{III_lem14146} and recall lemma \ref{III_lem12}, we have
  \begin{align*}
    \eta_j(0,0,0) &= D_{jj}(0,0,0) = (\xi_1\psi_j,\psi_j)_{L^2} =0,\\
    \partial_{\tau}\eta_j(0,0,0)
    &= -i((-L+P)^{-2}\xi_1\psi_j,\psi_j)_{L^2}\\
    &= -i( \xi_1\xi_j\M^{1/2},\xi_j\M^{1/2})_{L^2}
    = 0,\\
    \partial_{r}\eta_j(0,0,0)
    &= -2\pi i((-L+P)^{-1}\xi_1(-L+P)^{-1}\xi_1\psi_j,\psi_j)_{L^2}\\
    &= -2\pi i((-L+P)^{-1}\xi_1\psi_j,\xi_1\psi_j)_{L^2}.
  \end{align*}
  For the inner product $((-L+P)^{-1}\xi_1\psi_j,\xi_1\psi_j)_{L^2}$, we shall use the following lemma to deal with it.
  \begin{Lem}\label{III_lem17}
    Let $f\in \cap_{\beta\in\R}L^2_\beta$, then $(-L+P)^{-1}P^\perp f \in(\Ker L)^\perp$ and so
    \begin{align}
    ((-L+P)^{-1}f,f)_{L^2} &= (Pf,Pf)_{L^2} + (-L^{-1}P^\perp f,P^\perp f)_{L^2},
  \end{align}where $(-L^{-1}P^\perp f,P^\perp f)_{L^2}>0$ whenever $P^\perp f\neq 0$.
  \end{Lem}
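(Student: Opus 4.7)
The plan is to prove the three assertions in turn, each relying on the self-adjointness and non-positivity of $L$ together with Lemma \ref{III_lem12}.

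First, I would show that $g := (-L+P)^{-1} P^\perp f$ lies in $(\Ker L)^\perp$. Since $L$ is self-adjoint with $\Ker L = \text{Range}(P)$, the operator $L$ vanishes on $\Ker L$ and preserves $(\Ker L)^\perp$; consequently $LP = 0$ and, by self-adjointness, $PL = 0$. Applying $P$ to the identity $(-L+P)g = P^\perp f$ therefore gives $Pg = P P^\perp f = 0$, which is exactly the claim.

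Next, I would establish the decomposition. Write $f = Pf + P^\perp f$. By Lemma \ref{III_lem12} applied at $\lambda = 0$, $(-L+P)^{-1} Pf = Pf$. Hence
\begin{align*}
((-L+P)^{-1}f, f)_{L^2}
&= (Pf + g,\; Pf + P^\perp f)_{L^2} \\
&= (Pf,Pf)_{L^2} + (Pf, P^\perp f)_{L^2} + (g, Pf)_{L^2} + (g, P^\perp f)_{L^2}.
\end{align*}
The middle two inner products vanish: $(Pf, P^\perp f)_{L^2} = 0$ by orthogonality, and $(g, Pf)_{L^2} = 0$ by Step~1. For the final term, I use that $Pg = 0$ combined with $(-L+P)g = P^\perp f$ to obtain $-Lg = P^\perp f$, so $g = -L^{-1} P^\perp f$ in the sense of inverting $-L$ on its range $(\Ker L)^\perp$. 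Thus $(g, P^\perp f)_{L^2} = (-L^{-1} P^\perp f, P^\perp f)_{L^2}$.

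Finally, for the positivity, I would argue that $-L$ is a non-negative self-adjoint operator with kernel $\Ker L$, so $(-Lh, h)_{L^2} \ge 0$ with equality if and only if $h \in \Ker L$ (which follows from writing $-L = T^*T$ via the spectral theorem). Since $-Lg = P^\perp f$, if $P^\perp f \ne 0$ then $g \ne 0$; and since $g \in (\Ker L)^\perp$ by Step~1, $g \notin \Ker L$, giving $(-L^{-1} P^\perp f, P^\perp f)_{L^2} = (g, -Lg)_{L^2} > 0$. The mildest technical point is making sure $(-L+P)^{-1}$ genuinely exists on the relevant $L^2_\beta$ spaces, but this is immediate from Corollary \ref{II_coro} applied at $y = 0$, $\lambda = 0$, since $\BB\big|_{y=0} = L - P$. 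No serious obstacle arises; the key idea is simply that $P$ and $L$ commute through the orthogonal decomposition $L^2 = \Ker L \oplus (\Ker L)^\perp$, which decouples the computation cleanly.
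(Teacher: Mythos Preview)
Your proposal is correct and follows essentially the same route as the paper: define $g=(-L+P)^{-1}P^\perp f$, show $Pg=0$ (the paper does this by pairing $(-L+P)g=P^\perp f$ against $\psi\in\Ker L$, which is equivalent to your application of $P$ using $PL=0$), deduce $-Lg=P^\perp f$ so $g=-L^{-1}P^\perp f$, then split the inner product via $f=Pf+P^\perp f$ and Lemma~\ref{III_lem12}, and finish with the strict positivity of $(-Lh,h)$ for $h\in(\Ker L)^\perp\setminus\{0\}$. The only cosmetic difference is that the paper splits only the first slot of the inner product while you expand both; the underlying argument is identical.
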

  \begin{proof}
  Let $g = (- L+P)^{-1}P^\perp f$, then $(-L+P)g = P^\perp f$. Taking inner product with any $\psi\in\Ker L$, we have $(g,\psi) =0$.
  Thus $g\in(\Ker L)^\perp$ and so $-Lg=P^\perp f$, $g=-L^{-1}P^\perp f$. Thus
    \begin{align*}
      ((-L+P)^{-1}f,f)_{L^2}
      &= ((-L+P)^{-1}Pf,f)_{L^2} +  ((-L+P)^{-1}P^\perp f,f)_{L^2}\\
      &= (Pf,Pf)_{L^2} + (-L^{-1}P^\perp f,P^\perp f)_{L^2}.
    \end{align*}
    Also if $P^\perp f \neq 0$, then $(-L^{-1}P^\perp f,P^\perp f)_{L^2} = (h,-Lh)_{L^2}>0$, where $h=L^{-1}P^\perp f$.
  \qe\end{proof}

  With this lemma and noticing $P(\xi_1\psi_j)=0$, we have
  \begin{align*}
    \partial_{r}\eta_j(0,0,0)
    &= 2\pi i (L^{-1}\xi_1\xi_j\M^{1/2},\xi_1\xi_j\M^{1/2})_{L^2}.
  \end{align*}

  \vspace{1em}
  For $j=0,1,d+1$, in order to obtain the asymptotic behavior of $\eta_j$, we shall use the determinant. That is to let
  \begin{align*}
    f(\sigma,\tau,r,\eta) := \det(\eta I_{3\times 3} - (D_{jk})_{j,k=0,1,d+1}).
  \end{align*}
  Then $f(\sigma,\tau,r,\eta_j(\sigma,\tau,r)) = 0$, for $(\sigma,\tau,r)\in B(0,r_1)$,
    since $\eta_j$ is the eigenvalue of $(D_{jk})_{j,k=0,1,d+1}$.
  Taking the derivatives with respect to $\tau,r$, for $|(\sigma,\tau,r)|<r_1$, we have
  \begin{align}\label{III_eq187}
    \partial_\tau f + \partial_\eta f\cdot\partial_\tau\eta_j &=0,\\
    \partial_{r} f + \partial_\eta f\cdot\partial_{r}\eta_j &=0.\label{III_eq188}
  \end{align}
  Since $f(\sigma,\tau,r) = \det(\eta I - (D_{jk})_{j,k=0,1,d+1})$, we shall use the Jacobi's formula to calculate the derivative to the determinant of a matrix.
  Recall that
  $\alpha_1 = (\xi^2_1\M^{1/2},\M^{1/2})_{L^2}$,
   $\alpha_2 = (\xi^2_1\M^{1/2},\psi_{d+1})_{L^2}$ and applying \eqref{III_lem14145}, we have
  \begin{align}
    \partial_\tau f = \tr(\adj(\eta & I-(D_{jk})_{j,k=0,1,d+1})  (-\partial_\tau D_{jk})_{j,k=0,1,d+1})\notag\\
    \partial_\tau f(0,0,0,\eta_{0,j}) &= \tr\Bigg(\adj
    \begin{pmatrix}
        \eta_{0,j} & -\alpha_1 & 0 \\
        -\alpha_1 & \eta_{0,j} & -\alpha_2 \\
        0 & -\alpha_2 & \eta_{0,j}
    \end{pmatrix}\times\notag
    \begin{pmatrix}
      0 & i\alpha_1 & 0 \\
      i\alpha_1 & 0 & i\alpha_2 \\
      0 & i\alpha_2 & 0
  \end{pmatrix}\Bigg)\\
  &= 2i(\alpha^2_1+\alpha^2_2)\eta_{0,j}.\label{III_eq195}
  \end{align}
  If we define $\alpha_3 = ((-L+P)^{-1}\xi_1\psi_1,\xi_1\psi_1)_{L^2}$, $\alpha_4 = ((-L+P)^{-1}\xi_1\psi_{d+1},\xi_1\psi_{d+1})_{L^2}$,
  then similar to \eqref{III_eq195} and applying \eqref{III_lem14146}, we have
  \begin{align}
    \partial_r f(0,0,0,\eta_{0,j})\notag
    &= \tr\Bigg(
    \begin{pmatrix}
        \eta^2_{0,j}-\alpha_2^2 & \alpha_1\eta_{0,j} & \alpha_1\alpha_2 \\
        \alpha_1\eta_{0,j} & \eta^2_{0,j} & \alpha_2\eta_{0,j} \\
        \alpha_1\alpha_2 & \alpha_2\eta_{0,j} & \eta^2_{0,j}-\alpha_1^2
    \end{pmatrix}\times
    \begin{pmatrix}
      2\pi i\alpha_1 & 0 & 2\pi i\alpha_2 \\
      0 & 2\pi i\alpha_3 & 0\\
      2\pi i\alpha_2 & 0 & 2\pi i\alpha_4
  \end{pmatrix}\Bigg)\\
  &= 2\pi i\big(\alpha_1\eta^2_{0,j} +\alpha_3\eta^2_{0,j} + \alpha_4\eta^2_{0,j}+\alpha_1\alpha_2^2 -\alpha_1^2\alpha_4\big).\label{III_eq199}
  \end{align}
  Also one can easily get
  \begin{align}\label{III_eq1100}
    \partial_\eta f(0,0,0,\eta_{0,j}) = 3\eta_{0,j}^2 - \alpha^2_1-\alpha^2_2.
  \end{align}
  Thus from \eqref{III_eq187}, \eqref{III_eq188} and use \eqref{III_eq195}, \eqref{III_eq199}, \eqref{III_eq1100}, we can summarize:
  \begin{Thm}\label{III_eta}
    For cases $j=2,\dots,d$, we have
    \begin{align*}
      \eta_j(0,0,0) &= \partial_\tau\eta_j(0,0,0) = 0,\\
      \partial_{r}\eta_j(0,0,0) &= 2\pi i (L^{-1}\xi_1\xi_j\M^{1/2},\xi_1\xi_j\M^{1/2})_{L^2},
    \end{align*}
    where $(L^{-1}\xi_1\xi_j\M^{1/2},\xi_1\xi_j\M^{1/2})_{L^2}<0$.
  For cases $j=0,1,d+1$, we have
    \begin{align*}
      \eta_{0,0} = \sqrt{\alpha_1^2+\alpha_2^2},\quad
      \eta_{0,1} = 0,\quad
      \eta_{0,d+1} = -\sqrt{\alpha_1^2+\alpha_2^2},
    \end{align*}and
    \begin{align*}
      \partial_\tau\eta_j(0,0,0)
      &=\left\{\begin{aligned}
        &0,&&\text{ if } j=1,\\
        &-i\eta_{0,j},&&\text{  if } j=0,d+1,
      \end{aligned}\right.\\
      \partial_r\eta_j(0,0,0)
      &=\left\{\begin{aligned}
        &\frac{2\pi i\big(\alpha_1\alpha_2^2 -\alpha_1^2\alpha_4\big)}{\alpha^2_1+\alpha^2_2},&&\text{ if } j=1,\\
        &\frac{-\pi i\big((\alpha_1+\alpha_3 + \alpha_4)(\alpha_1^2+\alpha^2_2)+\alpha_1\alpha_2^2 -\alpha_1^2\alpha_4\big)}{\alpha^2_1+\alpha^2_2},&&\text{  if } j=0,d+1.
      \end{aligned}\right.
    \end{align*}
  \end{Thm}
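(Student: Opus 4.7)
The plan is to handle the two blocks of eigenvalues separately, exploiting the block structure exposed in \eqref{III_eigenequation}. For $j=2,\dots,d$ the eigenvalue is the scalar diagonal entry $\eta_j=D_{jj}$, so every derivative reduces to a direct computation using the formulas \eqref{III_lem14145} and \eqref{III_lem14146}. For $j=0,1,d+1$ the eigenvalues arise from the $3\times 3$ block in \eqref{III_eigenequation2}, and I will differentiate the characteristic equation $f(\sigma,\tau,r,\eta_j(\sigma,\tau,r))=0$ implicitly, using Jacobi's formula to compute $\partial_\tau f$, $\partial_r f$, $\partial_\eta f$ at $(0,0,0,\eta_{0,j})$.

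For the first block, the value $\eta_j(0,0,0)=(\xi_1\psi_j,\psi_j)_{L^2}=0$ follows since $\xi_1\xi_j^2\M$ is odd in $\xi_1$. Using the self-adjointness of $-L+P$ and the identity $(-L+P)\psi_j=\psi_j$ for $\psi_j\in\Ker L$, the formula \eqref{III_lem14145} reduces to $\partial_\tau D_{jj}(0,0,0)=-i(\xi_1\psi_j,\psi_j)_{L^2}$, again $0$ by parity. For the $r$-derivative, a similar self-adjointness manipulation in \eqref{III_lem14146} yields
\begin{align*}
\partial_r\eta_j(0,0,0)=-2\pi i\bigl((-L+P)^{-1}\xi_1\psi_j,\xi_1\psi_j\bigr)_{L^2}.
\end{align*}
Because $\xi_1\psi_j=\xi_1\xi_j\M^{1/2}$ is orthogonal to every element of $\{\M^{1/2},\xi_k\M^{1/2},(|\xi|^2-d)\M^{1/2}\}$ by odd-in-$\xi_1$ and odd-in-$\xi_j$ parity, we have $P(\xi_1\psi_j)=0$, so Lemma \ref{III_lem17} identifies the inner product with $(-L^{-1}\xi_1\psi_j,\xi_1\psi_j)_{L^2}$, which is strictly positive since $-L^{-1}$ is positive definite on $(\Ker L)^{\perp}$. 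This gives the claimed value together with the sign statement $(L^{-1}\xi_1\xi_j\M^{1/2},\xi_1\xi_j\M^{1/2})_{L^2}<0$.

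For the second block, I first evaluate $F(0,0,0)$ to obtain the characteristic polynomial $\eta^3-(\alpha_1^2+\alpha_2^2)\eta$, so $\partial_\eta f(0,0,0,\eta)=3\eta^2-\alpha_1^2-\alpha_2^2$ as in \eqref{III_eq1100}. The three roots are $\eta_{0,0}=\sqrt{\alpha_1^2+\alpha_2^2}$, $\eta_{0,1}=0$, $\eta_{0,d+1}=-\sqrt{\alpha_1^2+\alpha_2^2}$, giving $\partial_\eta f=2(\alpha_1^2+\alpha_2^2)$ at the outer two roots and $-(\alpha_1^2+\alpha_2^2)$ at $\eta_{0,1}$. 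Next I use \eqref{III_lem14145} and \eqref{III_lem14146} to build $\partial_\tau F(0,0,0)$ and $\partial_r F(0,0,0)$; most entries vanish by parity, and the surviving ones reduce to Gaussian integrals giving $\alpha_1,\alpha_2$, together with the two resolvent integrals $\alpha_3$, $\alpha_4$ that parametrize $\partial_r F$. Jacobi's formula then produces $\partial_\tau f$ and $\partial_r f$ as traces of $\adj(\eta I-F(0,0,0))$ against these matrices, exactly the computation carried out in \eqref{III_eq195} and \eqref{III_eq199}. Dividing $-\partial_\tau f/\partial_\eta f$ and $-\partial_r f/\partial_\eta f$ at each of the three values of $\eta_{0,j}$, and using the algebraic relation $\eta_{0,0}^2=\eta_{0,d+1}^2=\alpha_1^2+\alpha_2^2$ to collapse the numerators, yields the stated piecewise formulas.

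The main obstacle is the linear-algebra bookkeeping in the second block: one must evaluate $\adj(\eta_{0,j}I-F(0,0,0))$ correctly, pair it with matrices having several off-diagonal entries, and then simplify the resulting trace using the special relation $\eta_{0,j}^2=\alpha_1^2+\alpha_2^2$ for $j=0,d+1$. A secondary subtlety is to justify the operator manipulations on $(-L+P)^{-1}$: the clean reduction via Lemma \ref{III_lem17} works only when the argument sits in $(\Ker L)^{\perp}$, which is the case for $\xi_1\psi_j$ with $j=2,\dots,d$; for $j=0,1,d+1$ no such reduction is attempted and the resolvent information is simply absorbed into the constants $\alpha_3,\alpha_4$.
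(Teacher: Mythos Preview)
Your proposal is correct and follows essentially the same route as the paper: for $j=2,\dots,d$ you compute $\eta_j=D_{jj}$ directly via \eqref{III_lem14145}--\eqref{III_lem14146}, reduce the resolvent using $(-L+P)^{-1}\psi_j=\psi_j$ and self-adjointness, then invoke Lemma~\ref{III_lem17} after checking $P(\xi_1\psi_j)=0$ by parity; for $j=0,1,d+1$ you differentiate the characteristic polynomial implicitly and apply Jacobi's formula to obtain \eqref{III_eq195}, \eqref{III_eq199}, \eqref{III_eq1100}, then divide. This is exactly the paper's argument, and your remarks about which parity considerations kill which entries and why Lemma~\ref{III_lem17} applies only in the first block are accurate.
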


  \subsubsection{The Eigenvalue Projection of $D$.}

  In the last section, we obtained $d+2$ smooth eigenvalues $\eta_j(\sigma,\tau,r)$ and $d+2$ smooth right eigenvectors $z_j(\sigma,\tau,r)\in\R^{d+2}$ to $(D_{jk})_{j,k=0}^{d+1}$, when $(\sigma,\tau,r)\in B(0,r_1)$.
  Notice that the dimension of Ker$L$ is $d+2$ and
  $(D_{jk})_{j,k=0,\dots,d+1}$ is the matrix representation of $D(\sigma,\tau,r,\omega)$ under the basis $\{R^T\psi_j\}$ of Ker$L$:
  \begin{align*}
    D(\sigma,\tau,r,\omega)(R^T\psi_0,\dots,R^T\psi_{d+1})
    = (R^T\psi_0,\dots,R^T\psi_{d+1})(D_{jk})_{j,k=0}^{d+1}.
  \end{align*}
  So we know that $\{\eta_j(\sigma,\tau,r)\}_{j=0,\dots,d+1}$ are the eigenvalues of $D(\sigma,\tau,r,\omega)$
  and the eigenspace of $D(\sigma,\tau,r,\omega)$ is exactly Ker$L$.
  Thus $D(\sigma,\tau,r,\omega)$ has eigenvectors:
  \begin{align*}
    \phi_j(\sigma,\tau,r) = \sum^{d+1}_{k=0}z^{(k)}_j(\sigma,\tau,r)R^T\psi_k\in C^\infty(B(0,r_1); L^2),
  \end{align*}where $(z^{(0)}_j,\dots,z^{(d+1)}_j) = z_j$.
  Define the smooth eigen-projections of $D(\sigma,\tau,r,\omega)$ on $L^2$ by
  \begin{align*}
    P_j(\sigma,\tau,r,\omega)f:&= (f, \phi_j)_{L^2}\phi_j\in  C^\infty(B(0,r_1);L(L^2_\beta)),
  \end{align*}for any $f\in L^2_\beta$, $\beta\in\R$.
  Then $\sum^{d+1}_{j=0}P_j(\sigma,\tau,r,\omega) = P$.

  \subsection{Eigenvalue Structure of $P(\lambda I -\BB)^{-1}P$.}
  Recall \eqref{III_eq28} that
  \begin{align*}
    P(\lambda I - \BB)^{-1}P &= \frac{1}{\lambda+1}\big(P-2\pi i r D(\sigma,\tau,r,\omega)\big).
  \end{align*}
  We regard $P(\lambda I - \BB)^{-1}P$ as an operator on Ker$L$,
   then we know its $j^{th}(j=0,\dots,d+1)$ eigenvalue is
   \begin{align}\label{III_eq1108}
     \mu_j(\sigma,\tau,r):=\frac{1}{\lambda+1}(1-2\pi ir\, \eta_j(\sigma,\tau,r))\in C^\infty(B(0,r_1)).
   \end{align}

  \begin{Thm}\label{III_asympotic}
  There exists $0<r_2\le r_1$ and $\sigma_j(r),\tau_j(r)\in C^\infty([-r_2,r_2])$, s.t.
  for $r\le r_2$,
  \begin{align}
    \mu_j(\sigma_j(r),\tau_j(r),r) &= 1.
  \end{align}
  Moreover,
  \begin{align}
    \sigma_j(r) &= \sigma_j^{(2)}r^2 + O(r^3),\\
    \tau_j(r) &= \tau_j^{(1)}r + O(r^3),
  \end{align}as $r\to 0$,
  where $\sigma_j^{(2)}<0$, $\tau_j^{(1)}\in\R$ with explicit expression
  \begin{align*}
    \tau_j^{(1)} = \left\{
    \begin{aligned}
      &0, &&\text{ if } j=1,\dots,d,\\
      &-2\pi \sqrt{1+2/d}, &&\text{ if } j=0,\\
      &2\pi \sqrt{1+2/d}, &&\text{ if } j=d+1,
    \end{aligned}
    \right.
  \end{align*}
  \begin{align*}
    \sigma_j^{(2)}&=\left\{\begin{aligned}
    &8\pi^2 (L^{-1}\xi_1\psi_j,\xi_1\psi_j)_{L^2}, && \text{ if } j=2,\dots,d,\\
    &\frac{8\pi^2}{1+2/d}(L^{-1}P^\perp(\xi_1\psi_{d+1}),P^\perp(\xi_1\psi_{d+1}))_{L^2},   &&   \text{ if } j=1,\\
    &4\pi^2 (L^{-1}P^\perp(\xi_1\psi_1),P^\perp(\xi_1\psi_1))_{L^2}\\
    &\qquad\qquad
    +\frac{8\pi^2}{d+2}(L^{-1}P^\perp(\xi_1\psi_{d+1}),P^\perp(\xi_1\psi_{d+1}))_{L^2},     &&\text{  if } j=0,d+1.
  \end{aligned}\right.
  \end{align*}
  \end{Thm}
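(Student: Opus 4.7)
The plan is to recognize that by \eqref{III_eq1108}, the equation $\mu_j(\sigma,\tau,r)=1$ is equivalent to
\begin{align*}
\sigma + i\tau = -2\pi i r\, \eta_j(\sigma,\tau,r),
\end{align*}
which, after taking real and imaginary parts, gives a system of two real $C^\infty$ equations in the three real variables $(\sigma,\tau,r)$. Define $F_j:\R^3\to\R^2$ by $F_j(\sigma,\tau,r) = (\Re,\Im)\bigl[\sigma+i\tau+2\pi i r\eta_j(\sigma,\tau,r)\bigr]$. At the origin, $F_j(0,0,0)=0$ because the factor $r$ kills the $\eta_j$ term, and the Jacobian $\partial F_j/\partial(\sigma,\tau)$ at $(0,0,0)$ is the identity matrix for the same reason, irrespective of the value of $\partial_\sigma\eta_j$, $\partial_\tau\eta_j$ at the origin. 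The implicit function theorem then yields $r_2\in(0,r_1]$ and unique $\sigma_j,\tau_j\in C^\infty([-r_2,r_2])$ with $\sigma_j(0)=\tau_j(0)=0$ solving the equation.

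Next I would extract the asymptotic behavior by inserting the Taylor expansions $\sigma_j(r)=\sum_k\sigma_j^{(k)}r^k$, $\tau_j(r)=\sum_k\tau_j^{(k)}r^k$ into the defining identity and matching coefficients, using the formulas for $\eta_{0,j}$, $\partial_\tau\eta_j(0,0,0)$, $\partial_r\eta_j(0,0,0)$ from Theorem \ref{III_eta}. At order $r^1$ the right side contributes $-2\pi i r\,\eta_{0,j}$, giving $\sigma_j^{(1)}+i\tau_j^{(1)}=-2\pi i\eta_{0,j}$. Since $\eta_{0,j}$ is real, this forces $\sigma_j^{(1)}=0$ in all cases; for $j=1,\ldots,d$ one has $\eta_{0,j}=0$ so $\tau_j^{(1)}=0$, while for $j=0,d+1$ one obtains $\tau_j^{(1)}=-2\pi\eta_{0,j}=\mp 2\pi\sqrt{\alpha_1^2+\alpha_2^2}$, matching the theorem after the elementary Gaussian calculation $\alpha_1^2+\alpha_2^2=1+2/d$.

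At order $r^2$ the computation splits into two cases. For $j=2,\ldots,d$ (and similarly $j=1$) one has $\sigma_j,\tau_j=O(r^2)$, so the only surviving contribution on the right is $-2\pi i r\,\partial_r\eta_j(0)\cdot r$, giving $\sigma_j^{(2)}+i\tau_j^{(2)}=-2\pi i\,\partial_r\eta_j(0)$. By Theorem \ref{III_eta}, $\partial_r\eta_j(0)$ is purely imaginary, so $\tau_j^{(2)}=0$ (hence $\tau_j(r)=O(r^3)$) and $\sigma_j^{(2)}$ is a real scalar multiple of $(L^{-1}\xi_1\psi_j,\xi_1\psi_j)_{L^2}$, which is negative by Lemma \ref{III_lem17} since $P(\xi_1\psi_j)=0$ for $j=2,\ldots,d$ (and $\xi_1\psi_1$ has non-trivial $P^\perp$ part when $j=1$). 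For $j=0,d+1$, the nonzero $\tau_j^{(1)}$ feeds back quadratically, producing the extra cross-term $-2\pi i\,\partial_\tau\eta_j(0)\,\tau_j^{(1)}=4\pi^2\eta_{0,j}^2$, so that
\begin{align*}
\sigma_j^{(2)}+i\tau_j^{(2)} = 4\pi^2\eta_{0,j}^2 - 2\pi i\,\partial_r\eta_j(0).
\end{align*}
Since $\partial_r\eta_j(0)$ is again purely imaginary, the right side is real, forcing $\tau_j^{(2)}=0$ and giving an explicit real value of $\sigma_j^{(2)}$, which the formulas in Theorem \ref{III_eta} allow to rewrite in the stated form via Lemma \ref{III_lem17} applied to $\xi_1\psi_1$ and $\xi_1\psi_{d+1}$ (both of which are orthogonal to $\Ker L$ except for their $\psi_{d+1}$ and $\psi_1$ components, which are accounted for by the $\alpha_1^2+\alpha_2^2$ term).

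The main obstacle is the bookkeeping in the $j=0,d+1$ case: one must (i) verify that no $r^2$ coefficient of $\tau_j$ appears, which amounts to showing $4\pi^2\eta_{0,j}^2-2\pi i\,\partial_r\eta_j(0)$ is real, and (ii) simplify the resulting real scalar into the quoted combination of $(L^{-1}P^\perp\xi_1\psi_1,P^\perp\xi_1\psi_1)_{L^2}$ and $(L^{-1}P^\perp\xi_1\psi_{d+1},P^\perp\xi_1\psi_{d+1})_{L^2}$; both steps rely on systematically rewriting the $\alpha_k$ via the decomposition in Lemma \ref{III_lem17}. A mild secondary point is that the implicit function theorem only gives the solution for small $|r|$, but this is exactly the regime we need and the extension of $D_{jk}$ to $\sigma<0$ via Seeley's theorem ensures the smooth dependence on all of $[-r_2,r_2]$ rather than just $[0,r_2]$.
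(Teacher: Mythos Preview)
Your proposal is correct and follows essentially the same route as the paper: both apply the implicit function theorem to (an equivalent form of) the equation $\mu_j=1$, then differentiate the resulting identity $\sigma_j(r)+i\tau_j(r)=-2\pi i r\,\eta_j(\sigma_j(r),\tau_j(r),r)$ and read off the first two derivatives using Theorem~\ref{III_eta} and Lemma~\ref{III_lem17}. One small clarification: the role of Seeley's extension is not to pass from $r\ge 0$ to $r\in\R$ (the paper already treats $r$ as a real parameter), but to make $\eta_j$ smooth in a full $(\sigma,\tau,r)$-neighborhood of the origin so that the implicit function theorem applies and so that the resulting curve $\sigma_j(r)$, which is negative for $r\neq 0$, stays in the domain where $\eta_j$ is defined.
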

  \begin{proof}
    1. Define $f = (\Re\mu_j,\Im\mu_j):B(0,r_1)\subset\R^3\to \R^2$ to be a smooth function.
    Notice
    \begin{align}
      \mu_j(0,0,0) = 1,\
      \partial_\sigma\mu_j(0,0,0) = -1,\
      \partial_\tau\mu_j(0,0,0) = -i.
    \end{align}
    Thus $f(0,0,0) = (1,0)$,
      $\det\nabla_{\sigma,\tau}f(0,0,0) = 1$.
    By implicit function theorem, there exists $r_2\in(0,r_1]$ and functions $\sigma_j(r)$, $\tau_j(r)\in C^\infty(|r|\le r_2)$ such that for $|r|\le r_2$,
    \begin{align*}
      \sigma(0) =\tau(0) =0,\
      \mu_j(\sigma_j(r), \tau_j(r),r) = 1.
    \end{align*}

  2. For $|r|\le r_2$, by \eqref{III_eq1108},
    \begin{align}
      1 = \mu_j(\sigma_j(r),\tau_j(r),r) \notag
      &=\frac{1}{\sigma_j(r)+i\tau_j(r)+1}(1-2\pi ir\eta_j(\sigma_j(r),\tau_j(r),r)),\\
      \sigma_j(r)+i\tau_j(r) &= -2\pi ir\eta_j(\sigma_j(r),\tau_j(r),r).\label{III_eq493}
    \end{align}Using \eqref{III_eq493} and applying the behavior of $\eta_j$ from \ref{III_eta}, we have
    \begin{align*}
      \sigma'_j(0)+i\tau'_j(0) &= -2\pi i\eta_j(0,0,0)
      = \left\{
      \begin{aligned}
        &0, &&\text{ if } j=1,\dots,d,\\
        &-2\pi i\sqrt{\alpha_1^2+\alpha_2^2},&&\text{ if } j=0,\\
        &2\pi i\sqrt{\alpha_1^2+\alpha_2^2},&&\text{ if } j=d+1.\\
      \end{aligned}
      \right.
    \end{align*}
    So $\sigma'_j(0)=0$ and $\tau'_j(0) = -2\pi \eta_j(0,0,0)$, then
    \begin{align*}
      \sigma''_j(0)+i\tau''_j(0) &=
      -4\pi i\big(\partial_\sigma\eta_j\cdot\sigma'_j + \partial_\tau\eta_j\cdot\tau'_j + \partial_{r}\eta_j\big)|_{\sigma=\tau=r=0}\\
      &= 8\pi^2i\,\partial_\tau\cdot\eta_j(0,0,0)\eta_j(0,0,0) -4\pi i\partial_{r}\eta_j(0,0,0).
    \end{align*}
    If $j=2,\dots,d$, then
    \begin{align*}
      \sigma''_j(0)+i\tau''_j(0)
      &=8\pi^2 (L^{-1}\xi_1\xi_j\M^{1/2},\xi_1\xi_j\M^{1/2})_{L^2} <0.
    \end{align*}
    If $j=0,1,d+1$, then
  \begin{align}
    &\sigma''_j(0)+i\tau''_j(0)\notag\\
    &=\left\{\begin{aligned}
    &  \frac{8\pi^2 \alpha_1\big(\alpha_2^2 -\alpha_1\alpha_4\big)}{\alpha^2_1+\alpha^2_2},   &&\text{ if } j=1,\\
    &4\pi^2 \big(2 (\alpha_1^2+\alpha_2^2) - \frac{\big((\alpha_1+\alpha_3 + \alpha_4)(\alpha_1^2+\alpha^2_2)+\alpha_1\alpha_2^2 -\alpha_1^2\alpha_4\big)}{\alpha^2_1+\alpha^2_2}\big), &&\text{ if } j=0,d+1.
  \end{aligned}\right.\label{III_eq88}
  \end{align}
  Using Gamma function, we can calculate that
  \begin{align*}
    \alpha_1 = (\xi_1\M^{1/2},\xi_1\M^{1/2})_{L^2} = 1,\quad
    \alpha_2 = (\xi_1\M^{1/2},\xi_1\psi_{d+1})_{L^2} = \sqrt{\frac{2}{d}},
  \end{align*}
  Also by lemma \ref{III_lem17}, we have
  \begin{align*}
    \alpha_3 = ((-L+P)^{-1}\xi_1\psi_1,\xi_1\psi_1)_{L^2}
    &= \|P(\xi_1\psi_1)\|^2_{L^2} + (-L^{-1}P^\perp(\xi_1\psi_1),P^\perp(\xi_1\psi_1))_{L^2},\\
    \alpha_4 = ((-L+P)^{-1}\xi_1\psi_{d+1},\xi_1\psi_{d+1})_{L^2}
    &= \|P(\xi_1\psi_{d+1})\|^2_{L^2} + (-L^{-1}P^\perp(\xi_1\psi_{d+1}),P^\perp(\xi_1\psi_{d+1}))_{L^2},
  \end{align*}
  and here
  \begin{align*}
    \|P(\xi_1\psi_1)\|^2_{L^2}
    = \sum^{d+1}_{j=0}|(\xi^2_1\M^{1/2},\psi_j)_{L^2}|^2
    &= |(\xi^2_1\M^{1/2},\M^{1/2})_{L^2}|^2 + |(\xi^2_1\M^{1/2},\psi_{d+1})_{L^2}|^2
    = 1 + \frac{2}{d},\\
    \|P(\xi_1\psi_{d+1})\|^2_{L^2}
    = \sum^{d+1}_{j=0}|(\xi_1\psi_{d+1}&,\psi_j)_{L^2}|^2
    = |(\xi_1\psi_{d+1},\xi_1\M^{1/2})_{L^2}|^2
    = \frac{2}{d}.
  \end{align*}Substitute these values into \eqref{III_eq88} and we will get the explicit expression of $\tau''_j(0)$ and $\sigma''_j(0)$.
  \qe
  \end{proof}

  Denote
  \begin{align}
    H(\sigma,\tau,r,\omega) = \frac{1}{\lambda+1}\big(P-2\pi i r D(\sigma,\tau,r,\omega)\big).
  \end{align}

  As an operator defined on finite dimensional space $\Ker L$, for $|r|\le r_2$, we have
  \begin{align*}
    I-H(\sigma,\tau,r,\omega)
      &= \sum^{d+1}_{j=0}\big(1-\mu_j(\sigma,\tau,r)\big)P_j(\sigma,\tau,r,\omega).
  \end{align*}
  Then we claim that on Ker$L$,
  \begin{align*}
    \big(I-H(\sigma,\tau,r,\omega)\big)^{-1} &= \sum^{d+1}_{j=0}\big(1-\mu_j(\sigma,\tau,r)\big)^{-1}P_j(\sigma,\tau,r,\omega),
  \end{align*}here the inverse is taken in Ker$L$.
  Indeed, on $\ker L$,
  \begin{align*}
    \big(I&-H(\sigma,\tau,r,\omega)\big)\sum^{d+1}_{j=0}\big(1-\mu_j(\sigma,\tau,r)\big)^{-1}P_j(\sigma,\tau,r,\omega)\\
    &= \sum^{d+1}_{j=0}\big(1-\mu_j(\sigma,\tau,r)\big)P_j(\sigma,\tau,r,\omega)
    \times\sum^{d+1}_{k=0}\big(1-\mu_k(\sigma,\tau,y)\big)^{-1}P_k(\sigma,\tau,y)\\
    &= \sum^{d+1}_{j=0}P_j(\sigma,\tau,r,\omega)=P.
  \end{align*}

  Therefore for $\Re\lambda\ge 0$, $y\in\Rd\setminus\{0\}$,
  \begin{equation}\label{III_eqtem1}
    \begin{split}
        &(\lambda I - \B)^{-1}\\ &= (\lambda I - \BB)^{-1}
        + (\lambda I - \BB)^{-1}P(I-H(\sigma,\tau,r,\omega))^{-1}P(\lambda I - \BB)^{-1}\\
        &= (\lambda I - \BB)^{-1}
        + \sum^{d+1}_{j=0}\big(1-\mu_j(\sigma,\tau,r)\big)^{-1}
        (\lambda I - \BB)^{-1}P_j(\sigma,\tau,r,\omega)(\lambda I - \BB)^{-1}.
    \end{split}
  \end{equation}
  Denote $U_j(\sigma,\tau,y)=(\lambda I - \BB)^{-1}P_j(\sigma,\tau,r,\omega)(\lambda I - \BB)^{-1}$.
  Differentiate \eqref{III_eqtem1} with respect to $\tau$, we have
  \begin{align}\label{III_eq499}
    (\lambda I - \B)^{-n-1} =  (\lambda I - \BB)^{-n-1}
    + \sum^{d+1}_{j=0}\sum^{n}_{k=0}\big(1-\mu_j(\sigma,\tau,r)\big)^{-k-1}
    U^{(n)}_{j,k}(\sigma,\tau,y),
  \end{align}
  where $U^{(n)}_{j,k}(\sigma,\tau,y)\in C^\infty(\{(\sigma,\tau,y):|(\sigma,\tau,y)|\le r_2\}; L(L^2_\beta))$ is given as a linear combination of products of $\mu_j,U_j$ and their derivatives with respect to $\tau$, with
  \begin{align*}
    \sup_{(\sigma,\tau,r)\le \overline{B}(0,r_2)}
    \|U^{(n)}_{j,k}(\sigma,\tau,y)\|_{L(L^2_\beta)}<\infty.
  \end{align*}

  \section{Estimate on the Semigroup $e^{t\B}$}
  In this section, we will give the proof of boundedness of semigroup $e^{t\B}$ and its asymptotic behavior when $t\to\infty$.

  Fisrtly we need the resolvent identities and the inversion semigroup formula.
  \begin{align}\label{V_eq514}
    (\lambda I-\B)^{-1}
    &= (I-(\lambda I-\A)^{-1}K))^{-1}(\lambda I -\A)^{-1},
  \end{align}
  and for $\sigma>0$,
    \begin{align}\label{IV_inversion_semigroup}
      e^{t\B}u = \slim_{a\to \infty}\frac{1}{2\pi i}\int^{\sigma+ia}_{\sigma-ia}e^{\lambda t}(\lambda I-\B)^{-1}u\,d\lambda,
    \end{align}for $u\in D_0$, where the limit in taken with respect to $L^2(\R^d_\xi)$ norm.

  Now we investigate the right hand side of \eqref{IV_inversion_semigroup}.
  Consider $L^2(\Rd)$ to be the whole space.
  For $y\in\Rd\setminus\{0\}$, we have $\{\Re\lambda\ge 0\}\subset \rho(\B)$ and thus
  $(\lambda I - \B)^{-1}:\{\Re\lambda\ge 0\}\to L(L^2)$ is a holomorphic operator-valued function with respect to $\lambda$ in $\{\Re\lambda\ge 0\}$. Applying Cauchy thoerem, for $u\in L^2$, $\sigma>0$,
  \begin{align*}
    \notag\frac{1}{2\pi i}\int^{\sigma+ia}_{\sigma-ia}e^{\lambda t}(\lambda I-\B)^{-1}u\,d\lambda
    &=\frac{1}{2\pi i}\Big(\int^{ia}_{-ia}+\int^{\sigma+ia}_{ia}+\int^{-ia}_{\sigma-ia}\Big)e^{\lambda t}(\lambda I-\B)^{-1}u\,d\lambda\\
    &=I_1u + I_2u + I_3u.
  \end{align*}

  \begin{Rem}
    The whole space really matters, since only in $L^2$ we can use $\{\Re\lambda\ge 0\}\subset \rho(\B)$. But later we will assume $u\in L^2_\beta\subset L^2$, with $\beta\ge 0$.
  \end{Rem}

  Firstly we consider the part $I_2$ and $I_3$.
  If futhermore $u \in L^2_{\beta+\gamma}$, then
  \begin{align*}
    I_2u&=\frac{1}{2\pi i}\int^{\sigma+ia}_{ia}e^{\lambda t}(\lambda I-\B)^{-1}u\,d\lambda\\
    &= \frac{1}{2\pi i}\int^{\sigma+ia}_{ia}(I-(\lambda I-\A)^{-1}K))^{-1}(\lambda I -\A)^{-1}u\,d\lambda.
  \end{align*}
  Notice here $y$ is fixed, then by theorem \ref{II_continuous}, $\sup_{\lambda\in\overline{C_+}}\|(I-(\lambda I-\A)^{-1}K))^{-1}\|_{L(L^2_\beta)}\le C_{y,\beta}<\infty$.
  Thus by theorem \ref{II_main_estimate},
  \begin{align*}
    \|I_2u\|_{L^2_\beta}
    &\le \frac{C_y}{2\pi}\int^{\sigma+ia}_{ia}e^{\sigma t}\|(\lambda I -\A)^{-1}u\|_{L^2_\beta}\,d\lambda\\
    &\le C_{y,\nu}e^{\sigma t}\int^{\sigma+ia}_{ia}\,d\lambda \big(\|u\|_{L^2_{\beta+\gamma}(|\xi|\ge R)}+
    C_\nu\|u\|_{L^2_{\beta+\gamma}}\frac{1}{|a|}\big)\to 0,
  \end{align*}as $R\to\infty$, and hence $a\ge 2\pi |y|R\to\infty$. The part $I_3$ is similar.
  Thus $I_2u,I_3u\to 0$ in $L^2_\beta$ norm as $a\to \infty$ if $u\in L^2_{\beta+\gamma}$.


  To deal with the part $I_1$, we need the following lemma.
  \begin{Lem}
    For any $y_1>0$, $\sigma\ge 0$, we have
    \begin{align*}
      &\int_\R \|((\sigma+i\tau)I-\A)^{-1}u\|^2_{L^2_\beta}\,d\tau
      \le C_{\nu}\|u\|^2_{L^2_{\beta+\gamma/2}}.
    \end{align*}
    Thus for $|y|\ge y_1$,
    \begin{align*}
      \int_\R \|((\sigma+i\tau)I-\B)^{-1}u\|^2_{L^2_\beta}\,d\tau
      &\le C_{\nu,y_1,\beta}\|u\|^2_{L^2_{\beta+\gamma/2}}.
    \end{align*}

  \end{Lem}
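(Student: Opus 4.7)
The plan is to dispatch the two inequalities separately: the first by a direct Fubini computation exploiting the multiplication-operator structure of $\A$, and the second by factoring through the first via the resolvent identity \eqref{V_eq514}.

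For the inequality on $\A$, I use that $((\sigma+i\tau)I - \A)^{-1}$ is pointwise multiplication by $1/(\sigma + \nu(\xi) + i(\tau + 2\pi y\cdot\xi))$, so
\[
\|((\sigma+i\tau)I - \A)^{-1}u\|_{L^2_\beta}^2 = \int_\Rd \frac{(1+|\xi|)^{2\beta}|u(\xi)|^2}{(\sigma+\nu(\xi))^2 + (\tau + 2\pi y\cdot\xi)^2}\,d\xi.
\]
First I would swap the $\tau$- and $\xi$-integrals by Fubini-Tonelli and translate $\tau \mapsto \tau - 2\pi y\cdot\xi$ in the inner integral, reducing it to $\int_\R d\tau/(a^2+\tau^2) = \pi/a$ with $a = \sigma + \nu(\xi) > 0$. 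Since $\sigma \ge 0$ and Theorem \ref{I_ThmL1} gives $\nu(\xi) \ge \nu_0 (1+|\xi|)^{-\gamma}$, the $\tau$-integral produces a factor bounded by $\pi \nu_0^{-1}(1+|\xi|)^{\gamma}$, which absorbs as weight $\gamma/2$ on each copy of $u$, yielding the desired bound by $C_\nu\|u\|^2_{L^2_{\beta+\gamma/2}}$.

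For the inequality on $\B$, I would invoke the factorization $((\sigma+i\tau) I - \B)^{-1} = (I - ((\sigma+i\tau)I - \A)^{-1}K)^{-1}((\sigma+i\tau)I - \A)^{-1}$ from \eqref{V_eq514}. Lemma \ref{II_continuous} (2), applied with $r = y_1$, supplies a constant $C_{y_1,\beta}$ such that $\|(I - (\lambda I - \A)^{-1}K)^{-1}\|_{L(L^2_\beta)} \le C_{y_1,\beta}$ uniformly in $\lambda \in \overline{\C_+}$ and in $y$ with $|y| \ge y_1$. Applying this bound pointwise in $\tau$, then squaring and integrating in $\tau$, reduces the second estimate to the first.

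No substantive obstacle is anticipated: the argument rests entirely on two tools already in hand, the pointwise lower bound on $\nu$ and the uniform invertibility of $I - (\lambda I - \A)^{-1}K$ off a neighborhood of $y=0$. The only point demanding care is that the bound from Lemma \ref{II_continuous} (2) truly depends only on $y_1$ and not on the specific $y$ or $\lambda$ being considered, which is precisely the content of that lemma when specialized to $r = y_1$.
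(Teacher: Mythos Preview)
Your proposal is correct and follows essentially the same approach as the paper: Fubini plus the explicit $\tau$-integral $\int_\R d\tau/(a^2+\tau^2)=\pi/a$ combined with the lower bound on $\nu$ for the first inequality, then the resolvent factorization \eqref{V_eq514} and the uniform bound from Lemma \ref{II_continuous}(2) for the second. The only cosmetic difference is that the paper drops $\sigma$ before integrating in $\tau$ rather than after, which is immaterial.
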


  \begin{proof}
    For $\sigma\ge 0$,
    \begin{align*}
      \int_\R \|((\sigma+i\tau)I-\A)^{-1}u\|^2_{L^2_\beta}\,d\tau
      &=\int_\Rd(1+|\xi|)^{2\beta}|u(\xi)|^2\int_\R\frac{1}{|\sigma+\nu(\xi)|^2+|\tau+2\pi y\cdot\xi|^2}\,d\tau d\xi\\
      &\le\int_\Rd(1+|\xi|)^{2\beta}|u(\xi)|^2\int_\R\frac{1}{|\nu(\xi)|^2+\tau^2}\,d\tau d\xi\\
      &\le C_{\nu}\|u\|^2_{L^2_{\beta+\gamma/2}}.
    \end{align*}
    Then using the fact that $
      \sup_{\lambda\in\overline{C_+},|y|\ge y_1}\|(I-(\lambda I-\A)^{-1}K))^{-1}\|_{L(L^2_\beta)}\le C_{y_1,\beta}<\infty$, and the resolcent identity \eqref{V_eq514}, we get the second assertion.
  \qe\end{proof}

  Now for the part $I_1$,
  \begin{align}
    I_1u&=\frac{1}{2\pi i}\int^{+a}_{-a}e^{i\tau t}(i\tau I-\B)^{-1}u\,d\tau.
  \end{align}
  Notice for $\tau\in\R$, $y\in\Rd\setminus\{0\}$, we have $i\tau \in \rho(\B)$, and
  \begin{align}
    \frac{d^k}{d\tau^k}\big((i\tau I-\B)^{-1}u(\xi)\big)
    &= i^kk!(-1)^{k}(i\tau I-\B)^{-k-1}u(\xi).
  \end{align}
  Thus using integration by parts,
  \begin{align*}
    &\int^{+a}_{-a}e^{i\tau t}(i\tau I-\B)^{-1}u(\xi)\,d\tau\\
    &=\frac{e^{i\tau t}}{it}(i\tau I-\B)^{-1}u(\xi)\Big|^{\tau=a}_{\tau=-a}
    -\int^a_{-a}\frac{d}{d\tau}\big((i\tau I-\B)^{-1}u(\xi)\big)\frac{e^{i\tau t}}{it}\,d\tau\\
    &=\cdots\\
    &=\sum^n_{k=1}\frac{(-1)^{k-1}e^{i\tau t}}{(it)^k}
    \frac{d^{k-1}}{d\tau^{k-1}}\big((i\tau I-\B)^{-1}u(\xi)\big)\Big|^{\tau=a}_{\tau=-a}\\
    &\qquad\qquad\qquad\qquad\qquad\qquad+(-1)^n\int^a_{-a}\frac{d^n}{d\tau^n}\big((i\tau I-\B)^{-1}u(\xi)\big)\frac{e^{i\tau t}}{(it)^n}\,d\tau\\
    &=\sum^n_{k=1}\frac{e^{i\tau t}}{it^k}
    (k-1)!(i\tau I-\B)^{-k}u(\xi)\Big|^{\tau=a}_{\tau=-a}+
    n!\int^a_{-a}(i\tau I-\B)^{-n-1}u(\xi)\frac{e^{i\tau t}}{t^n}\,d\tau.
  \end{align*}
  Recall theorem \ref{II_main_estimate} that
  for $R>0$, $\Re\lambda>0$ with $|\lambda|\ge 4\pi |y| R$,
    \begin{align*}
      \|(\lambda I - \A)^{-1}u\|_{L^2_\beta}
      \le C_{\nu}\|u\|_{L^2_{\beta+\gamma}(|\xi|\ge R)}+
      C_\nu\|u\|_{L^2_{\beta+\gamma}}\frac{1}{|\lambda|}.
    \end{align*}
  Without loss of generality, we can let $a\ge 1$. Notice that here $y\neq 0$ is fixed, then applying \ref{II_coro} and \ref{II_continuous}, we have
    \begin{align*}
      \|(ia I-\B)^{-k}u(\xi)\|_{L^2_\beta}
      &\le C^{k-1}_\nu\|(ia I-\B)^{-1}u(\xi)\|_{L^2_{\beta+(k-1)\gamma}}\\
      &\le C^k_{\nu}\big(\|u\|_{L^2_{\beta+k\gamma}(|\xi|\ge R)}+
      \|u\|_{L^2_{\beta+k\gamma}}\frac{1}{|a|}\big) \to 0,
    \end{align*}as $R\to \infty$ and $a\to \infty$, if $u\in L^2_{\beta+k\gamma}$. Thus it suffices to deal with following term when $a\to\infty$.
  \begin{align*}
    n!\int^a_{-a}(i\tau I-\B)^{-n-1}u(\xi)\frac{e^{i\tau t}}{t^n}\,d\tau.
  \end{align*}

  \subsection{$y$ away from origin}\label{V_subsection1}

   Notice from \ref{II_coro}, we have for $\beta\in\R$, $b>0$ that
  \begin{align*}
    C_{b}:=\sup_{\Re\lambda\ge 0,|y|\ge b}\|(\lambda I-\B)^{-1}\|_{L(L^2_{\beta+\gamma},L^2_{\beta})}
    <\infty.
  \end{align*}
  So if $|y|\ge b$, for $u\in L^2_{\beta+n\gamma}$, $v\in L^2_\beta$, we have
  \begin{align*}
    &\big|\big(n!\int^a_{-a}(i\tau I-\B)^{-n-1}u(\xi)\big)\frac{e^{i\tau t}}{t^n}\,d\tau,\, v\big)_{L^2_\beta}\big|\\
    &\le \frac{n!}{t^n}\int^a_{-a}\|(i\tau I-\B)^{-n}u(\xi)\|_{L^2_{\beta+\gamma/2}}\|(-i\tau I-\B^*)^{-1}v(\xi)\|_{L^2_{\beta-\gamma/2}}\,d\tau\\
    &\le \frac{n!}{t^n}
    \Big(\int^a_{-a}\|(i\tau I-\B)^{-n}u(\xi)\|^2_{L^2_{\beta+\gamma/2}}\,d\tau\Big)^{1/2}
    \Big(\int^a_{-a}\|(-i\tau I-\B^*)^{-1}v(\xi)\|^2_{L^2_{\beta-\gamma/2}}\,d\tau\Big)^{1/2}\\
    &\le \frac{C_{r_0}n!}{t^n}\|u\|_{L^2_{\beta+n\gamma}}\|v\|_{L^2_\beta}.
  \end{align*}
  Notice here $\B^*=2\pi i y\cdot\xi+L$ has the same boundedenss as $\B$. Thus if $u\in L^2_{\beta+n\gamma}$, $\{n!\int^a_{-a}(i\tau I-\B)^{-n-1}u(\xi)\frac{e^{i\tau t}}{t^n}\,d\tau\}^\infty_{a=1}$ is a bounded sequence in $(L^2_\beta)^*$,
  hence has a weakly $*$ convergent subsequence (denoted by $\{a_k\}$) and its weak limit $Iu$ is controlled by $\frac{n!}{t^n}\|u\|_{L^2_{\beta+n\gamma}}$.
  That is
  \begin{align}
    Iu := \weaklim_{a_k\to\infty}I_1u&=\weaklim_{a_k\to\infty}\frac{n!}{2\pi it^n}\int^{a_k}_{-a_k}e^{i\tau t}(i\tau I-\B)^{-n-1}u(\xi)\,d\tau,
  \end{align}and
  \begin{align}\label{IV_eq129}
    \|Iu\|_{L^2_\beta} = \|\weaklim_{a_k\to\infty}I_1u\|_{L^2_\beta}
    \le \frac{C_{r_0}n!}{t^n}\|u\|_{L^2_{\beta+n\gamma}}.
  \end{align}

  \begin{Rem}
    Here the integral region can be replaced by $(b,a)$, for any $b>0$.
  \end{Rem}

  \subsection{$y$ near origin}\label{subsection_nearorigin}
  For $y$ near the origin but $y\neq 0$, we can also use the method of integration by parts to obtain
  \begin{align*}
    \slim_{a\to \infty}\frac{1}{2\pi i}\int^{\sigma+ia}_{\sigma-ia}e^{\lambda t}(\lambda I-\B)^{-1}u\,d\lambda
    &=\slim_{a\to\infty}\frac{n!}{2\pi it^n}\int^{a}_{-a}e^{i\tau t}(i\tau I-\B)^{-n-1}u\,d\tau,
  \end{align*}where the limit is taken in $L^2_\beta$.
  Then for $b\in(0,a)$, we divide the integral region to be
  \begin{align*}
    \int^{a}_{-a} = \int^{a}_{b} + \int^{b}_{-b} + \int^{-b}_{-a}.
  \end{align*}
  The first and the last term has a weak limit by same argument as in section \ref{V_subsection1}, where we essentially need the uniformly boundedness when $|\Im\lambda|\ge b$ from Corollary \ref{II_coro} that
  \begin{align*}
    C_{b}:=\sup_{\Re\lambda\ge 0,|\Im\lambda|\ge b,y\in\Rd}\|(\lambda I-\B)^{-1}\|_{L(L^2_{\beta+\gamma},L^2_\beta)}
    <\infty.
  \end{align*}
  So it reamins to deal with the integral
  \begin{align}\label{IV_eq132}
    \int^{b}_{-b}e^{i\tau t}(i\tau I-\B)^{-n-1}u\,d\tau.
  \end{align}
  We will use the identity \eqref{III_eq499} from section 4. The following lemma is used for controlling the term $(1-\mu_j(\sigma,\tau,r))^{-1}$ in \eqref{III_eq499}.
  \begin{Lem}\label{IV_14}
    Let $f(x,y,z)\in C^1(\{(x,y,z)\in \R^3:|(x,y,z)|\le r\})$.
    Suppose $\nabla_{(x,y,z)}f(0,0,0)=-(1,i,a)$, with some constant $a\in\C$.
    Then for $|(x_1,y_1)|\le r_1$, $|(x_2,y_2)|\le r_1$,
    \begin{align*}
      \frac{1}{2}|(x_1-x_2,y_1-y_2)|\le |f(x_1,y_1,z)-f(x_2,y_2,z)|\le \frac{3}{2}|(x_1-x_2,y_1-y_2)|.
    \end{align*}
    \begin{proof}
      By mean value theorem,
      \begin{align*}
        f(x_1,y_1,z)-f(x_2,y_2,z) = \nabla_{(x,y,z)}f(x_\theta,y_\theta,0)\cdot(x_1-x_2,y_1-y_2,0),
      \end{align*}
      where $x_\theta = \theta x_1+ (1-\theta)x_2$, $y_\theta = \theta y_1+ (1-\theta)y_2$,
      with $\theta\in(0,1)$.
      Take $r_1\in(0,r)$ so small that for $|(x_\theta,y_\theta)|\le r_1$,
      \begin{align*}
        |\nabla_{(x,y,z)}f(x_\theta,y_\theta,0)-\nabla_{(x,y,z)}f(0,0,0)|<\frac{1}{2}.
      \end{align*}
  Then
      \begin{align*}
        f(x_1,y_1,z)-f(x_2,y_2,z) &= \nabla_{(x,y,z)}f(0,0,0)\cdot(x_1-x_2,y_1-y_2,0)\\
        &\qquad+\left(\nabla_{(x,y,z)}f(x_\theta,y_\theta,0)-\nabla_{(x,y,z)}f(0,0,0)\right)\cdot(x_1-x_2,y_1-y_2,0),\\
        \left|f(x_1,y_1,z)-f(x_2,y_2,z)\right|
        &\ge \left|(1,i,a)\cdot(x_1-x_2,y_1-y_2,0)\right| - \frac{1}{2}\left|(x_1-x_2,y_1-y_2,0)\right|\\
        &=\frac{1}{2}\left|(x_1-x_2,y_1-y_2)\right|,
      \end{align*}and the second inequality is similar.
    \qe\end{proof}
  \end{Lem}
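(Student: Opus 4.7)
The plan is to apply the fundamental theorem of calculus (equivalently, the mean value theorem) along the straight line segment between $(x_2,y_2,z)$ and $(x_1,y_1,z)$ in $\R^3$, keeping the third coordinate fixed at $z$. Since $f$ is $C^1$, one has
\begin{align*}
f(x_1,y_1,z)-f(x_2,y_2,z) = \int_0^1 \nabla_{(x,y,z)} f\bigl(x_\theta,y_\theta,z\bigr)\cdot (x_1-x_2,\,y_1-y_2,\,0)\,d\theta,
\end{align*}
where $x_\theta=\theta x_1+(1-\theta)x_2$, $y_\theta=\theta y_1+(1-\theta)y_2$. The crucial observation is that the displacement vector has vanishing third component, so only the first two components of the gradient contribute to the dot product, and the complex coefficient $a$ plays no role.

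Next I would split the integrand as
\begin{align*}
\nabla f(x_\theta,y_\theta,z)\cdot(x_1-x_2,y_1-y_2,0) = \nabla f(0,0,0)\cdot(x_1-x_2,y_1-y_2,0) + E(\theta,z),
\end{align*}
where $E(\theta,z) = \bigl(\nabla f(x_\theta,y_\theta,z)-\nabla f(0,0,0)\bigr)\cdot(x_1-x_2,y_1-y_2,0)$. By hypothesis the main term equals $-(x_1-x_2)-i(y_1-y_2)$, whose modulus is exactly $|(x_1-x_2,y_1-y_2)|$. Using continuity of $\nabla f$ at the origin, I would choose $r_1\in(0,r)$ small enough so that
\begin{align*}
|\nabla f(p)-\nabla f(0,0,0)|<\tfrac12 \qquad \text{whenever } |p|\le r_1;
\end{align*}
note that for $|(x_1,y_1)|,|(x_2,y_2)|\le r_1$ and $|z|\le r_1$ the segment stays in this ball, so $|E(\theta,z)|\le \tfrac12|(x_1-x_2,y_1-y_2)|$ for every $\theta$.

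Finally, applying the reverse and forward triangle inequalities to the integrated identity yields
\begin{align*}
\tfrac12|(x_1-x_2,y_1-y_2)| \le |f(x_1,y_1,z)-f(x_2,y_2,z)| \le \tfrac32|(x_1-x_2,y_1-y_2)|,
\end{align*}
which is exactly the claimed two-sided bound. There is no real obstacle here: the argument is a standard Lipschitz-from-$C^1$ estimate, and the only point that requires care is tracking that the third component of the displacement vanishes, which ensures the value of $a$ is irrelevant and the leading order of the lower bound comes from the first two gradient components alone. A minor implicit assumption is that the statement should be read with $|z|\le r_1$ as well (so the segment lies in the chosen neighborhood), which is how I would make the argument precise.
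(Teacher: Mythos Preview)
Your proof is correct and follows essentially the same approach as the paper's: expand the difference via a mean-value argument, isolate the leading term $\nabla f(0,0,0)\cdot(x_1-x_2,y_1-y_2,0)$ whose modulus is exactly $|(x_1-x_2,y_1-y_2)|$, and control the remainder by continuity of $\nabla f$. Two small points where your version is actually cleaner than the paper's: (i) since $f$ is complex-valued, the classical mean value theorem the paper invokes does not give a single intermediate point $\theta$, whereas your integral form of the fundamental theorem of calculus is valid without this issue; (ii) the paper evaluates the gradient at $(x_\theta,y_\theta,0)$ rather than $(x_\theta,y_\theta,z)$, and your remark that one should read the statement with $|z|\le r_1$ is the right way to make the argument uniform in $z$.
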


  Write $y=r\omega$, with $r\in\R_+$, $\omega\in\S^{d-1}$. Applying theorem \ref{III_asympotic}, for $r\le r_2$,
  \begin{align*}
    (1-\mu_j(\sigma,\tau,r))^{-1} &= (\mu_j(\sigma_j(r),\tau_j(r),r)-\mu_j(\sigma,\tau,r))^{-1}.
  \end{align*}Thus by lemma \ref{IV_14},
  \begin{align*}
    \frac{2}{3}|(\sigma_j(r)-\sigma,\tau_j(r)-\tau)|\le |1-\mu_j(\sigma,\tau,r)|^{-1} \le 2|(\sigma_j(r)-\sigma,\tau_j(r)-\tau)|.
  \end{align*}
  By asymptotic behavior of $\sigma(r)$ and $\tau(r)$ in \ref{III_asympotic}, there exists $\eta_0>0$, and $r_3\in(0,r_2)$ such that for $r\le r_3$,
  \begin{align*}
    \sigma_j(r)\le -2\eta_0r^2,\quad
    |\tau_j(r)- \tau^{(1)}_jr| \le \eta_0r^2.
  \end{align*}
  Thus for $r\le r_3$, the equation \eqref{IV_eq132} becomes
  \begin{align*}
    &\int^{b}_{-b}e^{i\tau t}(i\tau I-\B)^{-n-1}u(\xi)\,d\tau\\
    &= \int^{b}_{-b}e^{i\tau t}\Big((\lambda I - \BB)^{-n-1}
    + \sum^{d+1}_{j=0}\sum^{n}_{k=0}\big(1-\mu_j(\sigma,\tau,r)\big)^{-k-1}
    U^{(n)}_{j,k}(\sigma,\tau,y)\Big)u(\xi)\,d\tau\\
    &= \int^{b}_{-b}e^{i\tau t}(\lambda I - \BB)^{-n-1}u(\xi)\,d\tau
    +\sum^{d+1}_{j=0}\sum^{n}_{k=0}\int^{b}_{-b}e^{i\tau t}\big(1-\mu_j(\sigma,\tau,r)\big)^{-k-1}
    U^{(n)}_{j,k}(\sigma,\tau,y)u(\xi)\,d\tau\\
    &=: I_0u + \sum^{d+1}_{j=0}\sum^{n}_{k=0}I^{(n)}_{j,k}u.
  \end{align*}
  Noticing that $\sup_{\Re\lambda\ge 0, y\in\Rd}\|(\lambda I - \BB)^{-1}\|_{L(L^2_{\beta+\gamma},L^2_\beta)}<\infty$, we have
  \begin{align}\label{IV_eq148}
    \|I_0u\|_{L^2_\beta} 
    \le C_{b,\beta}\|u\|_{L^2_{\beta+n\gamma}}.
  \end{align}
  On the other hand, since $U^{(n)}_{j,k}(0,\tau,y)$ is smooth in $\{(\tau, y)\in [-r_3,r_3]\times \Rd\}$, we have
  \begin{align*}
    \sup_{|\tau|\le r_3, y\in\Rd}\|U^{(n)}_{j,k}(0,\tau,y)\|_{L(L^2_\beta)}=C_{\beta}<\infty.
  \end{align*}and then if $b\in(0,r_3]$
  \begin{align*}
    \|I^{(n)}_{j,k}u\|_{L^2_\beta}
    &\le \left\|\int^{b}_{-b}e^{i\tau t}\big(1-\mu_j(0,\tau,r)\big)^{-k-1}
    U^{(n)}_{j,k}(0,\tau,y)u(\xi)\,d\tau\right\|_{L^2_\beta}\\
    &\le C_{\beta,n}\|u(\xi)\|_{L^2_{\beta}}\int^{b}_{-b}\big|1-\mu_j(0,\tau,r)\big|^{-k-1}\,d\tau\\
    &\le C_{\beta,n}\|u(\xi)\|_{L^2_{\beta}}\int^{b}_{-b}\frac{1}{\big(|\sigma_j(r)|+|\tau-\tau_j(r)|\big)^{k+1}}\,d\tau.
  \end{align*}
  Now for $r\le r_3$, we have $\sigma_j(r)\le -2\eta_0r^2$,
    $|\tau_j(r)- \tau^{(1)}_jr| \le \eta_0r^2$.
  Thus for $k\in\N$,
  \begin{align*}
    \int^{b}_{-b}\frac{1}{\big(|\sigma_j(r)|+|\tau-\tau_j(r)|\big)^{k+1}}\,d\tau
    &\le\int^{b}_{-b}\frac{1}{\big(\eta_0r^2+|\tau-\tau^{(1)}_jr|\big)^{k+1}}\,d\tau\\
    &\le\int^{b+|\tau^{(1)}_j|r}_{0}\frac{2}{\big(\eta_0r^2+\tau\big)^{k+1}}\,d\tau.
  \end{align*}
  If $k=0$, then
  \begin{align*}
    \int^{b}_{-b}\frac{1}{\big(|\sigma_j(r)|+|\tau-\tau_j(r)|\big)^{k+1}}\,d\tau
    &\le 2\log \Big(\frac{b+|\tau^{(1)}_j|r+\eta_0r^2}{\eta_0r^2}\Big)\le C\log(r^{-1}+e).
  \end{align*}
  If $k>0$, then
  \begin{align*}
    \int^{b}_{-b}\frac{1}{\big(|\sigma_j(r)|+|\tau-\tau_j(r)|\big)^{k+1}}\,d\tau
    &\le \frac{2}{-k}\Big[(\eta_0r^2+b+|\tau^{(1)}_j|r)^{-k} - (\eta_0r^2)^{-k}\Big]\le \frac{C_k}{r^{2k}}.
  \end{align*}
  Denote
  \begin{equation*}
    \rho_n(y):=\left\{\begin{aligned}
      &\log(|y|^{-1}+e),&&\text{ if }n=0,\\
      &\frac{1}{|y|^{2n}},&&\text{ if }n\ge 1.
    \end{aligned}\right.
  \end{equation*}
  Then for $k=\N$, $r\le r_3<1$,
  \begin{align}
    \|I^{(n)}_{j,k}u\|_{L^2_\beta}
    &\le C_{\beta}\rho_k(y)\|u(\xi)\|_{L^2_{\beta}},\notag\\\label{IV_eq168}
    \|\sum^{d+1}_{j=0}\sum^n_{k=0}I^{(n)}_{j,k}u\|_{L^2_\beta}
    &\le C_{\beta,d}\rho_n(y)\|u\|_{L^2_{\beta}}.
  \end{align}

  For $|y|,b\in(0,r_3]$,
  combining \eqref{IV_eq148} and \eqref{IV_eq168}, we have
  \begin{align*}
    \|\int^{b}_{-b}e^{i\tau t}(i\tau I-\B)^{-n-1}u(\xi)\,d\tau\|_{L^2_\beta}
    &\le C_{r_3,b,\beta}\|u\|_{L^2_{\beta+n\gamma}} + C_{\beta,n,d}\rho_n(y)\|u(\xi)\|_{L^2_{\beta}}.
  \end{align*}
  Thus for $u\in D_\beta$, $\beta\ge 0$,
  \begin{equation}\label{IV_173}
    \begin{split}
    \|e^{t\B}u\|_{L^2_\beta}
    &= \|\slim_{a\to\infty}\frac{n!}{2\pi it^n}
    \Big(\int^{a}_{b}+\int^{b}_{-b}+\int^{-b}_{-a}\Big)e^{i\tau t}(i\tau I-\B)^{-n-1}u(\xi)\,d\tau\|_{L^2_\beta}\\
    &\le \|\slim_{a\to\infty}\Big(\int^{a}_{b}+\int^{-b}_{-a}\Big)e^{i\tau t}(i\tau I-\B)^{-n-1}u(\xi)\,d\tau\|_{L^2_\beta}
    + \|I_0u + \sum^{d+1}_{j=0}\sum^{n}_{k=0}I^{(n)}_{j,k}u\|_{L^2_{\beta}}\\
    &\le \frac{C_{n,\beta,d,b}}{t^n}\Big(\|u\|_{L^2_{\beta+n\gamma}} + \rho_n(y)\|u\|_{L^2_{\beta}}\Big).
  \end{split}
  \end{equation}

  \subsection{Result}
  Now for $|y|\ge r_3$, we have \eqref{IV_eq129}. Together with the estimate \eqref{IV_173} for $|y|\le r_3$, we have
  \begin{align*}
    \|e^{t\B}u\|_{L^2_\beta} &\le \frac{C_{r_3,n,\beta,d}}{t^n}\Big(\|u\|_{L^2_{\beta+n\gamma}} + \rho_n(y)\chi_{|y|\le r_3}\|u\|_{L^2_{\beta}}\Big).
  \end{align*}
  But also the semigroup estimate on $e^{t\B}$ gives $\|e^{t\B}u\|_{L^2_\beta} \le e^{t\|K\|_{L(L^2_\beta)}}\|u\|_{L^2_\beta}$.
  Thus for $n\in\N$,
  \begin{align*}
    \|e^{t\B}u\|_{L^2_\beta} &\le \frac{C_{r_3,n,\beta,d}}{(1+t)^n}\Big(\|u\|_{L^2_{\beta+n\gamma}} + \rho_n(y)\chi_{|y|\le r_3}\|u\|_{L^2_{\beta}}\Big).
  \end{align*}
  But here in order to use interpolation theorem, we can only use a weaker result:
  \begin{align*}
    \|e^{t\B}u\|_{L^2_\beta} &\le \frac{C_{r_3,\beta,d}(1+\rho_n(y))\chi_{|y|\le r_3}}{(1+t)^n}\|u\|_{L^2_{\beta+n\gamma}}.
  \end{align*}
  Notice for $n\in\N$, $\theta\in(0,1)$, we have
  \begin{align*}
    &\hspace{1em}(1+\rho_n(y)\chi_{|y|\le r_3})^\theta (1+\rho_{n+1}(y)\chi_{|y|\le r_3})^{1-\theta}\\
    &\le C_{\theta}(1+\rho^\theta_n(y)\rho^{1-\theta}_{n+1}(y)\chi_{|y|\le r_3})\\
    &\le C_{\theta} + C_{\theta}\chi_{|y|\le r_3}\left\{
    \begin{aligned}
      &|y|^{-2(\theta n+(1-\theta)(n+1))}, &&\text{ if }n\ge 1, \\
      &C_\varepsilon|y|^{-2(1-\theta)}\log^\theta(|y|^{-1}+e), &&\text{ if }n= 0.
    \end{aligned}\right.
  \end{align*}
  Define
  \begin{equation*}
    \rho_\alpha(y):=\left\{
    \begin{aligned}
      &\frac{1}{|y|^{2\alpha}}\log(|y|^{-1}+e) &&\text{ if }\alpha\in[0,1),\\
      &\frac{1}{|y|^{2\alpha}} &&\text{ if }\alpha\ge 1.
    \end{aligned}\right.
  \end{equation*} Then by a interpolation theorem in appendix \ref{A_interpolation}, we have the final result:
  \begin{Thm}\label{IV_semigroup_estimate}
    Assume $\gamma\in[0,d)$, $\alpha\in[0,\infty)$, $\beta\ge 0$, then there exists $r_3\in(0,1)$ such that
    \begin{align}
      \|e^{t\B}u\|_{L^2_\beta} &\le \frac{C_{\alpha,d,r_3,\beta}(1+\rho_\alpha(y)\chi_{|y|\le r_3})}{(1+t)^\alpha}\|u\|_{L^2_{\beta+\alpha\gamma}}.
    \end{align}
  \end{Thm}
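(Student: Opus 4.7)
Combining \eqref{IV_eq129} for $|y|\ge r_3$ with \eqref{IV_173} for $|y|\le r_3$, and using the contraction-type bound $\|e^{t\B}\|_{L(L^2_\beta)} \le e^{t\|K\|_{L(L^2_\beta)}}$ to handle $t$ near zero, the integer-index estimate
\begin{equation*}
\|e^{t\B}u\|_{L^2_\beta} \le \frac{C_{n,\beta,d,r_3}\,(1+\rho_n(y)\chi_{|y|\le r_3})}{(1+t)^n}\,\|u\|_{L^2_{\beta+n\gamma}}, \quad n\in\N,
\end{equation*}
is already in hand. My plan is to extend this from integer $n$ to arbitrary $\alpha\in[0,\infty)$ by real/complex interpolation between consecutive integers $n$ and $n+1$.

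Explicitly, given a non-integer $\alpha$, I set $n=\lfloor\alpha\rfloor$ and pick $\theta\in(0,1)$ so that $\alpha=\theta n+(1-\theta)(n+1)$. Regarding $y$ and $t$ as external parameters, I apply the interpolation result of Appendix \ref{A_interpolation} to $e^{t\B}$ viewed as a linear map from $L^2_{\beta+n\gamma}$ (respectively $L^2_{\beta+(n+1)\gamma}$) into the common target $L^2_\beta$. Since the weights are $(1+|\xi|)^{2(\beta+n\gamma)}$, the complex interpolation identity $[L^2(w_0),L^2(w_1)]_\theta = L^2(w_0^\theta w_1^{1-\theta})$ delivers the interpolated domain $L^2_{\beta+\alpha\gamma}$; the temporal factor becomes $(1+t)^{-\alpha}$; and the operator norm is bounded by the geometric mean of the two endpoint constants.

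It then remains to estimate $(1+\rho_n(y)\chi_{|y|\le r_3})^\theta (1+\rho_{n+1}(y)\chi_{|y|\le r_3})^{1-\theta}$ by a constant multiple of $1+\rho_\alpha(y)\chi_{|y|\le r_3}$. This elementary computation has been sketched just above the statement: for $n\ge 1$ both $\rho_n$ and $\rho_{n+1}$ are pure negative powers of $|y|$ and the product equals $|y|^{-2\alpha}=\rho_\alpha(y)$, while for $n=0$ the product $|y|^{-2(1-\theta)}\log^\theta(|y|^{-1}+e)=|y|^{-2\alpha}\log^\theta(|y|^{-1}+e)$ is dominated by $\rho_\alpha(y)=|y|^{-2\alpha}\log(|y|^{-1}+e)$, since $\log(|y|^{-1}+e)\ge 1$ on $\{|y|\le r_3<1\}$ and $\theta\in(0,1)$. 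The integer cases $\alpha=n$ follow directly from the endpoint estimate, completing the proof.

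The only genuinely delicate point is verifying that the interpolation lemma in Appendix \ref{A_interpolation} is strong enough to deliver a Riesz--Thorin-type geometric-mean bound on the operator norm for weighted $L^2$ spaces, with weights depending only on the velocity variable $\xi$ and with the parameters $(y,t)$ carried through untouched. Once that interpolation statement is in place, the theorem reduces to the bookkeeping above; no further spectral or semigroup analysis is required.
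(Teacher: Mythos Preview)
Your proposal is correct and follows essentially the same route as the paper: combine \eqref{IV_eq129} and \eqref{IV_173} with the crude semigroup bound to obtain the integer-$n$ estimate in product form, then interpolate between consecutive integers via the weighted three-lines lemma of Appendix~\ref{A_interpolation}, and finally bound the geometric mean of the $\rho$-factors by $1+\rho_\alpha(y)\chi_{|y|\le r_3}$ exactly as you describe. The interpolation lemma in the appendix is precisely the Riesz--Thorin-type statement you need (fixed target $L^2_\beta$, weighted sources $L^2_{\beta+n\gamma}$, geometric-mean control of the constants), so your ``delicate point'' is already taken care of.
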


  \section{Estimate on the Semigroup $e^{tB}$ and Global Existence}
  In this section, we will give the proof on estimate on $e^{tB}$ and the proof of our main existence theorem \ref{Main_Theorem}. Define 
  \begin{align*}
  B = -\xi\cdot\nabla_x + L,\\
  A= -\xi\cdot\nabla_x -\nu,
  \end{align*}with domain $D_p(B):=\{f\in L^p_{\beta}(H^l_x):\xi\cdot\nabla_x f\in L^p_{\beta}(H^l_x)\}$. One can prove that $(B,D_p(B))$ generates a strongly continuous semigroup on $L^p_{\beta}(H^l_x)$ by regarding it as a bounded perturbation $K$ on operator $-\xi\cdot\nabla_x - \nu$, whose semigroup has explicit expression 
  \begin{align*}
  e^{t(-\xi\cdot\nabla_x-\nu)}u(x,\xi) := e^{-\nu t}u(x-t\xi,\xi).
  \end{align*}Also Fourier transform $\F_x$ on $x\in\Rd$ is an isomorphism on $L^2(\R^{d}_x)$ and $\F B=\B \F$, hence for $u\in D_p(B)$, $p\in[1,\infty]$, 
  \begin{align*}
    e^{tB}u = \F^{-1}e^{t\B}\F u,
  \end{align*}where the equality "$=$" means the almost everywhere equality on $\R^d_\xi\times\R^d_x$.
  \begin{Thm}\label{V_thm1}
    Let $\gamma\in[0,d)$, $p\in[1,2]$, $\alpha\in [0,\frac{d}{4}(\frac{2}{p}-1))$, $\beta,l\ge 0$. Suppose $u\in L^2_{\beta+\alpha\gamma}(H^l)\cap L^2_{\beta+\alpha\gamma}(L^p)$, then
    \begin{align}
      \|e^{tB}u\|_{L^\infty_\alpha(L^2_\beta(H^l))} \le
      C_{\alpha,r_3,d,\beta,p}\left(\|u\|_{L^2_{\beta+\alpha\gamma}(H^l)}
      + \|u\|_{L^2_{\beta+\alpha\gamma}(L^p)}\right).
    \end{align}
  \end{Thm}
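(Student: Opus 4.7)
The strategy is to transfer the estimate to the Fourier side in $x$ and then apply Theorem~\ref{IV_semigroup_estimate} pointwise in the frequency variable $y$. Since $\F_x B\F_x^{-1}=\B$, the identity $e^{tB}u=\F_x^{-1}e^{t\B}\F_x u$ combined with Plancherel for the $H^l$-norm in $x$ and Fubini gives
\begin{equation*}
  \|e^{tB}u\|_{L^2_\beta(H^l)}^2
  =\int_{\Rd}(1+|y|^2)^l\,\|e^{t\B}\F_x u(y,\cdot)\|_{L^2_\beta(\R^d_\xi)}^2\,dy.
\end{equation*}
Inserting the pointwise bound from Theorem~\ref{IV_semigroup_estimate} produces the factor $(1+t)^{-2\alpha}\bigl(1+\rho_\alpha(y)\chi_{|y|\le r_3}\bigr)^{2}$ multiplying $\|\F_x u(y,\cdot)\|_{L^2_{\beta+\alpha\gamma}}^2$, and the $(1+t)^{-2\alpha}$ factor is precisely what the weight in $L^\infty_\alpha(\R_t;\cdot)$ absorbs.

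I would then split the $y$-integral at $|y|=r_3$. On $\{|y|\ge r_3\}$ the singular weight is absent, so that piece reassembles, via another use of Plancherel, into a multiple of $\|u\|_{L^2_{\beta+\alpha\gamma}(H^l)}^2$. The delicate region is $\{|y|\le r_3\}$, where $(1+|y|^2)^l$ is bounded by a constant but $\rho_\alpha(y)^2$ is singular at the origin. Here I would swap the order of integration in $y$ and $\xi$ and, for each fixed $\xi$, estimate
\begin{equation*}
  \int_{|y|\le r_3}\rho_\alpha(y)^2\,|\F_x u(y,\xi)|^2\,dy
\end{equation*}
by H\"older's inequality with conjugate exponents $(s,s')$ chosen so that $2s'=p'$, i.e.\ $s=p/(2-p)$. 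The $L^{2s'}=L^{p'}$ factor is then dominated, via the Hausdorff--Young inequality (valid since $p\in[1,2]$), by $\|u(\cdot,\xi)\|_{L^p(\R^d_x)}^2$. Re-integrating in $\xi$ against the weight $(1+|\xi|)^{2(\beta+\alpha\gamma)}$ yields $\|u\|_{L^2_{\beta+\alpha\gamma}(L^p)}^2$.

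The main obstacle, and the step that pins down the hypothesis on $\alpha$, is showing that the constant $\bigl(\int_{|y|\le r_3}\rho_\alpha(y)^{2s}\,dy\bigr)^{1/s}$ is finite. Since $\rho_\alpha(y)\lesssim |y|^{-2\alpha}\bigl(1+\log|y|^{-1}\bigr)$ near $y=0$, integrability in $d$ dimensions requires $4\alpha s<d$; substituting $s=p/(2-p)$ this becomes precisely $\alpha<\tfrac{d}{4}\bigl(\tfrac{2}{p}-1\bigr)$, matching the stated range. Combining the two regional estimates and taking $\sup_{t\ge 0}(1+t)^\alpha$ on both sides then yields the claimed inequality.
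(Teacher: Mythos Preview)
Your proposal is correct and follows essentially the same route as the paper: pass to the Fourier side in $x$, apply Theorem~\ref{IV_semigroup_estimate} pointwise in $y$, split at $|y|=r_3$, and on the small-$|y|$ region pair the singular weight $\rho_\alpha$ with $\widehat u$ via H\"older and Hausdorff--Young with the same exponents (your $s=p/(2-p)$ is exactly the paper's $q'$). The only cosmetic difference is that you swap the $y$ and $\xi$ integrals by Fubini before applying H\"older, whereas the paper applies H\"older in $y$ first and then needs Minkowski's integral inequality to pass from $\bigl(\int\|\widehat u(y,\cdot)\|_{L^2_{\beta+\alpha\gamma}}^{p'}dy\bigr)^{1/p'}$ to $\|u\|_{L^2_{\beta+\alpha\gamma}(L^p)}$; the two orderings are equivalent here.
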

  \begin{proof}
  For $l\ge 0$,
    \begin{align*}
      \|e^{tB}u\|_{L^2_{\beta+\alpha\gamma}(H^l)}
      &= \Big(\int_\Rd\int_\Rd(1+|\xi|)^{2\beta}(1+|y|)^{2l}|e^{t\B}\widehat{u}(y,\xi)|^2\,dyd\xi\Big)^{1/2}.
    \end{align*}Let $r_3>0$ be chosen in section \ref{subsection_nearorigin}.
    We split the integral on $y$ into two parts: $|y|\ge r_3$ and $|y|\le r_3$.
    Then on one hand,
    \begin{align*}
      &\Big(\int_{|y|\ge r_3}\int_\Rd (1+|\xi|)^{2\beta}(1+|y|)^{2l}|e^{t\B}\widehat{u}(y,\xi)|^2\,d\xi dy\Big)^{1/2}\\
      &\le C_{r_3,d,\beta}\Big(\int_{|y|\ge r_3}(1+|y|)^{2l}(1+t)^{-2\alpha} \|\widehat{u}(y,\xi)\|_{L^2_{\beta+\alpha\gamma}}^2\,d\xi dy\Big)^{1/2}\\
      &\le C_{r_3,d,\beta}(1+t)^{-\alpha} \|u(x,\xi)\|_{L^2_{\beta+\alpha\gamma}(H^l)}.
    \end{align*}
  On the other hand, let $(2q)'=p\in[1,\infty]$, by H\"older's inequality and Hausdorff–Young inequality, we have
  \begin{equation*}
  \begin{split}
    &\Big(\int_{|y|\le r_3}\int_\Rd (1+|\xi|)^{2\beta}(1+|y|)^{2l}|e^{t\B}\widehat{u}(y,\xi)|^2\,d\xi dy\Big)^{1/2}\\
    &\le \frac{C_{r_3,\beta,d}}{(1+t)^\alpha}\Big(\int_{|y|\le r_3}(1+|y|)^{2l} \Big(\|\widehat{u}\|_{L^2_{\beta+\alpha\gamma}} + |y|^{-2\alpha}\log(|y|^{-1}+e)\|\widehat{u}\|_{L^2_{\beta+\alpha\gamma}}\Big)^2\,dy\Big)^{1/2}\\
    &\le \frac{C_{r_3,\beta,d}}{(1+t)^\alpha}\|u\|_{L^2_{\beta+\alpha\gamma}(H^l)}
    + \frac{C_{r_3,\beta,d}}{(1+t)^\alpha}\||y|^{-2\alpha}\log(|y|^{-1}+e)\|_{L^{2q'}}
    \Big(\int_{|y|\le r_3}\|\widehat{u}(y,\xi)\|^{2q}_{L^2_{\beta+\alpha\gamma}}\,dy\Big)^{\frac{1}{2q}}\\
    &\le \frac{C_{r_3,\beta,d}}{(1+t)^\alpha}\|u\|_{L^2_{\beta+\alpha\gamma}(H^l)}
    + \frac{C_{r_3,\beta,d}}{(1+t)^\alpha}C_{q',\alpha}
    \|u(x,\xi)\|_{L^2_{\beta+\alpha\gamma}(L^{p})},
  \end{split}
  \end{equation*}
  provided $4\alpha q'\in[0,d)$. That is
  $\alpha\in[0,\frac{d}{4q'})=[0,\frac{d}{4}(\frac{2}{p}-1))$.
  Therefore, for any $t\ge 0$,
  \begin{align*}
    (1+t)^{\alpha}\|e^{tB}u\|_{L^2_\beta(H^l)} \le
    C_{\alpha,r_3,d,\beta,p}\big( \|u\|_{L^2_{\beta+\alpha\gamma}(H^l)}
    +\|u\|_{L^2_{\beta+\alpha\gamma}(L^p)}\big).
  \end{align*}
  \qe\end{proof}

  Futhermore, we need a estimate on the semigroup generated by $A$ on $L^p_\beta$.
  \begin{Lem}For $\alpha,l\ge 0$, $\beta\in\R$,
    \begin{align*}
      \|e^{tA}u\|_{L^p_\beta(H^l)} &\le C_{\nu_0}(1+t)^{-\alpha}\|u\|_{L^p_{\beta+\alpha\gamma}(H^l)}.
    \end{align*}
  \end{Lem}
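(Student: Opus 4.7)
The plan is to use the explicit formula for the semigroup generated by $A$, namely $e^{tA}u(x,\xi) = e^{-\nu(\xi)t}u(x-t\xi,\xi)$, which is a consequence of $A$ being a first-order transport operator plus the multiplication operator $-\nu(\xi)$. Since the Sobolev norm $\|\cdot\|_{H^l(\R^d_x)}$ is translation-invariant in $x$, the shift by $-t\xi$ disappears and one obtains, for each fixed $\xi$,
\begin{align*}
\|e^{tA}u(\cdot,\xi)\|_{H^l} = e^{-\nu(\xi)t}\|u(\cdot,\xi)\|_{H^l}.
\end{align*}

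Next I would raise this to the $p$-th power, multiply by $(1+|\xi|)^{p\beta}$, and integrate in $\xi$. This reduces the claim to the pointwise inequality
\begin{align*}
(1+t)^{\alpha}\, e^{-\nu(\xi)t} \le C_{\nu_0,\nu_1,\alpha}\,(1+|\xi|)^{\alpha\gamma},
\qquad t\ge 0,\ \xi\in\Rd,
\end{align*}
after which the integrand is bounded by $C\,(1+t)^{-p\alpha}(1+|\xi|)^{p(\beta+\alpha\gamma)}|u(\cdot,\xi)|^p_{H^l}$ and integration yields the stated bound.

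To establish the pointwise inequality, invoke the two-sided bound $\nu_0(1+|\xi|)^{-\gamma}\le \nu(\xi)\le \nu_1(1+|\xi|)^{-\gamma}$ from Theorem \ref{I_ThmL1}. Set $s=\nu(\xi)t\ge 0$. Then
\begin{align*}
(1+t)^{\alpha} e^{-\nu(\xi)t} \le 2^{\alpha}(1+t^{\alpha})e^{-s}
\le C_\alpha\bigl(1+\nu(\xi)^{-\alpha} s^{\alpha}\bigr)e^{-s},
\end{align*}
and since $\sup_{s\ge 0} s^{\alpha}e^{-s}<\infty$ and $\nu(\xi)^{-\alpha}\ge \nu_1^{-\alpha}$, one concludes
\begin{align*}
(1+t)^{\alpha} e^{-\nu(\xi)t}\le C_{\alpha,\nu_1}\,\nu(\xi)^{-\alpha}
\le C_{\alpha,\nu_0,\nu_1}\,(1+|\xi|)^{\alpha\gamma}.
\end{align*}

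This argument is essentially mechanical and I do not foresee any genuine obstacle; the only minor point of care is that the $H^l_x$-norm is taken only in the spatial variable so that the translation $x\mapsto x-t\xi$ really does leave it invariant, and that the constant depends on both $\nu_0$ and $\nu_1$ (only $\nu_0$ is displayed in the statement, but the upper bound $\nu_1$ enters implicitly when the term $e^{-\nu t}$ — as opposed to $t^{\alpha}e^{-\nu t}$ — is absorbed).
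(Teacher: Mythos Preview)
Your proposal is correct and follows essentially the same approach as the paper: both use the explicit formula $e^{tA}u(x,\xi)=e^{-\nu(\xi)t}u(x-t\xi,\xi)$, eliminate the spatial shift (you via translation invariance of $\|\cdot\|_{H^l}$, the paper via the Fourier transform in $x$), and reduce to the pointwise estimate $(1+t)^{\alpha}e^{-\nu(\xi)t}\le C(1+|\xi|)^{\alpha\gamma}$ proved from $\sup_{s\ge 0}s^{\alpha}e^{-s}<\infty$. One small remark: the dependence on $\nu_1$ in your argument is avoidable---if you first bound $e^{-\nu(\xi)t}\le e^{-\nu_0(1+|\xi|)^{-\gamma}t}$ (as the paper does) before substituting $s$, the constant term $1$ is absorbed directly since $(1+|\xi|)^{\alpha\gamma}\ge 1$, and only $\nu_0$ enters.
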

  \begin{proof}
  The semigroup generated by $A$ with domain $\{f\in L^p_{\beta}(L^2_x):\xi\cdot\nabla_x f\in L^p_{\beta}(L^2_x)\}$ is $e^{tA}u(x,\xi) = e^{-t\nu(\xi)}u(x-t\xi,\xi)$.
    For $u\in D(A)\subset L^p_\beta({H^l})$, we have 
    \begin{align*}
      (1+|\xi|)^{-\alpha\gamma}|\F_xe^{tA}u(x,\xi)|
      &\le e^{-t\nu_0(1+|\xi|)^{-\gamma}}(1+|\xi|)^{-\alpha\gamma}|\F_xu(x-t\xi,\xi)| \\
      &\le C_{\nu_0}(1+t)^{-\alpha}|e^{-2\pi i y\cdot\xi t}u(y,\xi)|,
    \end{align*}since for $t,\alpha\ge 0$, $\sup_{x\ge 0} (xt)^{\alpha}e^{-\nu_0tx}\le C_{\nu_0}$.
    Thus
    \begin{align*}
      \|e^{tA}u\|_{L^p_\beta(H^l)} 
      &= \|(\int_\Rd(1+|y|)^{2l}|\F_x e^{tA}u(x,\cdot)|^2\,dy)^{1/2}\|_{L^p_\beta}\\
      &\le C_{\nu_0}(1+t)^{-\alpha}\|u\|_{L^p_{\beta+\alpha\gamma}(H^l)}.
    \end{align*}
  \qe\end{proof}
  \begin{Rem}
    This lemma shows that the weighted normed space $L^2_{\beta+\alpha\gamma}$ is essential for our analysis.
  \end{Rem}

  \begin{Lem}
    For $0\le\alpha<1<\alpha_0$,
    \begin{align}
      \int^t_0 \frac{1}{(1+t-s)^\alpha(1+s)^{\alpha_0}}\,ds
      &\le C_{\alpha,\alpha_0}\frac{1}{(1+t)^\alpha}.
    \end{align}
  \end{Lem}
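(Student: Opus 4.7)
The plan is the standard splitting trick at $s = t/2$. Write
\[
  \int_0^t \frac{ds}{(1+t-s)^\alpha (1+s)^{\alpha_0}} = \int_0^{t/2} + \int_{t/2}^t =: J_1 + J_2,
\]
and bound each piece by exploiting the fact that on each half one of the two factors is comparable to a power of $1+t$.

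For $J_1$, I would use that $s \le t/2$ implies $1+t-s \ge 1 + t/2 \ge (1+t)/2$, so the singular factor $(1+t-s)^{-\alpha}$ is bounded by $2^\alpha (1+t)^{-\alpha}$. Pulling this constant out gives
\[
  J_1 \le \frac{2^\alpha}{(1+t)^\alpha} \int_0^{\infty} \frac{ds}{(1+s)^{\alpha_0}},
\]
and the remaining integral is finite precisely because $\alpha_0 > 1$. This yields the desired bound on $J_1$.

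For $J_2$, I would symmetrically use that $s \ge t/2$ implies $1+s \ge (1+t)/2$, hence $(1+s)^{-\alpha_0} \le 2^{\alpha_0} (1+t)^{-\alpha_0}$. Substituting $u = t-s$ then gives
\[
  J_2 \le \frac{2^{\alpha_0}}{(1+t)^{\alpha_0}} \int_0^{t/2} \frac{du}{(1+u)^\alpha} \le \frac{C_\alpha \, (1+t)^{1-\alpha}}{(1+t)^{\alpha_0}} = \frac{C_\alpha}{(1+t)^{\alpha_0 - 1 + \alpha}},
\]
where the bound on the $u$-integral uses $\alpha < 1$ (for $\alpha = 0$ we simply get $t/2 \le (1+t)$). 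Since $\alpha_0 > 1$, we have $\alpha_0 - 1 + \alpha \ge \alpha$, so $(1+t)^{-(\alpha_0 - 1 + \alpha)} \le (1+t)^{-\alpha}$, which gives the required estimate for $J_2$.

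There is no genuine obstacle here; the proof is a one-paragraph calculation, and the only points requiring care are the hypothesis $\alpha_0 > 1$ (used for integrability at infinity in $J_1$) and $\alpha < 1$ (used for integrability of $(1+u)^{-\alpha}$ near $u=t/2$ producing the favorable power $(1+t)^{1-\alpha}$ in $J_2$). Adding the bounds on $J_1$ and $J_2$ gives $C_{\alpha,\alpha_0}(1+t)^{-\alpha}$, completing the proof.
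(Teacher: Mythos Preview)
Your proof is correct and follows essentially the same approach as the paper: split the integral at $s=t/2$, bound the slowly varying factor on each half by a constant times a power of $1+t$, and use $\alpha_0>1$ for convergence on the first half and $\alpha<1$ to control the second, obtaining the two contributions $(1+t)^{-\alpha}$ and $(1+t)^{-(\alpha_0+\alpha-1)}$.
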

  \begin{proof}
    \begin{align*}
      &\int^t_0 \frac{1}{(1+t-s)^\alpha(1+s)^{\alpha_0}}\,ds\\
      &\le \frac{1}{(1+t/2)^\alpha}\int^{t/2}_0\frac{1}{(1+s)^{\alpha_0}}\,ds
      +\frac{1}{(1+t/2)^{\alpha_0}}\int^t_{t/2}\frac{1}{(1+t-s)^\alpha}\,ds\\
      &\le C_{\alpha,\alpha_0}\Big(\frac{1}{(1+t)^\alpha}+\frac{1}{(1+t)^{\alpha_0+\alpha-1}}\Big) \le C_{\alpha,\alpha_0}\frac{1}{(1+t)^\alpha}.
    \end{align*}
  \qe\end{proof}

  Recall that $e^{tB}$ can be viewed as a semigroup on $L^p_\beta(H^l)$, for $p\in[1,2]$. Then by the Duhamel principle, we can have another boundedness of semigroup $e^{tB}$.

  \begin{Thm}
    Let $\gamma\in[0,d)$, $l\ge 0$, $p\in[1,2]$, $\alpha\in [0,\frac{d}{4}(\frac{2}{p}-1))$, $\beta>\frac{d}{2}$.
    Suppose $u\in  L^2_{\beta+\alpha\gamma}(H^l)\cap L^2_{\beta+\alpha\gamma}(L^p)$, then
    \begin{align}\label{estimate_semi}
      \|e^{tB}u\|_{L^\infty_\alpha(L^\infty_\beta(H^l))}
      &\le C_{\nu,\gamma,\alpha,r_3,d,\beta,p}\Big(\|u\|_{L^\infty_{\beta+\alpha\gamma}(H^l)}
      + \|u\|_{L^2_{\alpha\gamma}(L^p)}\Big).
    \end{align}
  \end{Thm}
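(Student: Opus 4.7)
The plan is to apply Duhamel's formula $e^{tB}u = e^{tA}u + \int_0^t e^{(t-s)A}Ke^{sB}u\,ds$ and to bootstrap from the $L^2$-in-velocity decay of the preceding Theorem V.\ref{V_thm1} to the desired $L^\infty$-in-velocity decay. The free-streaming part $e^{tA}u$ is handled by the lemma proved just before the theorem: its proof carries over verbatim to $p=\infty$, because the pointwise representation $e^{tA}u(x,\xi)=e^{-t\nu(\xi)}u(x-t\xi,\xi)$ together with the elementary bound $\sup_{x\ge 0}(tx)^\alpha e^{-\nu_0 tx}\le C_{\nu_0}$ is $p$-independent. This yields $\|e^{tA}u\|_{L^\infty_\beta(H^l)}\le C_{\nu_0}(1+t)^{-\alpha}\|u\|_{L^\infty_{\beta+\alpha\gamma}(H^l)}$, which absorbs into the right-hand side of \eqref{estimate_semi}.

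For the Duhamel integral, I would first bound $\|e^{(t-s)A}Ke^{sB}u\|_{L^\infty_\beta(H^l)}\le C_{\nu}(1+t-s)^{-\alpha_1}\|Ke^{sB}u\|_{L^\infty_{\beta+\alpha_1\gamma}(H^l)}$ for some $\alpha_1>1$. The extra weight $\alpha_1\gamma$ is affordable because the smoothing estimate $\|Kf\|_{L^\infty_{\beta'+\gamma+2-1/q}}\le C\|f\|_{L^q_{\beta'}}$ from Theorem \ref{I_proerties_K}(3) (applied pointwise in $x$, combined with the Sobolev embedding $H^l_x\hookrightarrow L^\infty_x$ valid under the global hypothesis $l>d/2$) gives a net weight \emph{gain}, so one may pick $\alpha_1\in(1,1+2/\gamma)$ whenever $\gamma>0$, landing in $L^q_{\beta''}(H^l)$ for some $q\ge 2$ and some $\beta''\le\beta$. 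Then Theorem V.\ref{V_thm1} provides $\|e^{sB}u\|_{L^2_{\beta''}(H^l)}\le C(1+s)^{-\alpha}\bigl(\|u\|_{L^2_{\beta''+\alpha\gamma}(H^l)}+\|u\|_{L^2_{\alpha\gamma}(L^p)}\bigr)$, and the hypothesis $\beta>d/2$ delivers the weighted embedding $L^\infty_\beta\hookrightarrow L^2_{\beta''}$, so $\|u\|_{L^2_{\beta''+\alpha\gamma}(H^l)}\le C\|u\|_{L^\infty_{\beta+\alpha\gamma}(H^l)}$. Substituting and invoking the integration lemma $\int_0^t(1+t-s)^{-\alpha_1}(1+s)^{-\alpha}\,ds\le C(1+t)^{-\alpha}$ (valid since $\alpha_1>1>\alpha$) produces the desired $(1+t)^{-\alpha}$ decay after multiplying by $(1+t)^\alpha$.

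The main obstacle I anticipate is coordinating the three families of parameters -- the velocity weights, the velocity Lebesgue exponents, and the time-decay rates -- so that the chain of inequalities closes consistently and only the norms $\|u\|_{L^\infty_{\beta+\alpha\gamma}(H^l)}$ and $\|u\|_{L^2_{\alpha\gamma}(L^p)}$ appear on the right. In particular, making $\alpha_1>1$ available forces a specific bookkeeping of $\beta''$ through $K$ and relies crucially on $\beta>d/2$ to absorb the extra weight via the embedding $L^\infty_\beta\hookrightarrow L^2_{\beta''}$. When $d\ge 4$ or $\gamma$ is large so that the direct $L^2\to L^\infty$ step in Theorem \ref{I_proerties_K}(3) is unavailable (since $p_0=\max(d/(d-\gamma),d/2)>2$), I would instead route through an intermediate exponent $L^{p_\theta}_{\beta''}(H^l)$ via the interpolated version Theorem \ref{I_proerties_K}(4), combined with a Riesz--Thorin interpolation between the $L^2$ decay from Theorem V.\ref{V_thm1} and the crude exponential-in-time semigroup bound $\|e^{sB}u\|_{L^\infty_\beta(H^l)}\le e^{s\|K\|}\|u\|_{L^\infty_\beta(H^l)}$ inherited from Theorem \ref{II_spectrum}(2); the resulting loss in $s$-decay is absorbed by choosing $\alpha_1$ safely larger than $1$.
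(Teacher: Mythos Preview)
Your overall architecture matches the paper's: Duhamel with respect to $A$, the pointwise bound on $e^{tA}$, the choice of a decay exponent $\alpha_1>1$ in the convolution, the use of $K$-smoothing to trade velocity integrability for weight, the role of $\beta>d/2$ for the embedding, and the final appeal to the $L^2$ semigroup estimate. In the easy range where $2>\max(d/(d-\gamma),d/2)$ your single-step argument is essentially the paper's proof.

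The gap is in your fallback for the hard range. Interpolating the $L^2$ decay $(1+s)^{-\alpha}$ against the crude $L^\infty$ bound $e^{s\|K\|}$ yields, for any intermediate exponent, a factor of the form $(1+s)^{-\alpha(1-\theta)}e^{\theta s\|K\|}$ with $\theta>0$. This grows exponentially in $s$, and $\int_0^t(1+t-s)^{-\alpha_1}e^{cs}\,ds$ grows like $e^{ct}$ no matter how large $\alpha_1$ is; the loss cannot be absorbed. So the Riesz--Thorin route through the exponential semigroup bound does not close.

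The paper avoids this by never estimating $e^{sB}u$ in an intermediate $L^{p_\theta}_\xi$ space from the outside. Instead it iterates the Duhamel inequality itself through a finite chain of exponents. From the generic bound
\[
\|v\|_{L^\infty_\alpha(L^p_\beta(H^l))}\le C\|u\|_{L^p_{\beta+\alpha\gamma}(H^l)}+C\|Kv\|_{L^\infty_\alpha(L^p_{\beta+\alpha_0\gamma}(H^l))},
\]
valid for every $p\in[1,\infty]$, one first takes $p=\infty$ and uses $K:L^\infty_\beta\to L^\infty_{\beta+\gamma+2}$ with $\alpha_0\in(1,(\gamma+2)/\gamma)$ to drive the weight down to $\beta=0$. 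Then one builds a finite sequence $2=p_1<p_2<\cdots<p_n$ with $p_n>\max(d/(d-\gamma),d/2)$ via the recursion $p_{j+1}=p_j+p_j(p_j-1)/(p_0-p_j)$, so that $K:L^{p_j}\to L^{p_{j+1}}$ by Theorem~\ref{I_proerties_K}(4) and $K:L^{p_n}\to L^\infty$ by Theorem~\ref{I_proerties_K}(3). Each application of the displayed inequality with $p=p_{j+1}$ reduces the right-hand side to $\|v\|_{L^\infty_\alpha(L^{p_j}_0(H^l))}$ plus a term $\|u\|_{L^{p_j}_{\alpha\gamma}(H^l)}$, which is controlled by $\|u\|_{L^\infty_{\beta+\alpha\gamma}(H^l)}$ since $\beta>d/2$. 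After finitely many steps one lands on $\|v\|_{L^\infty_\alpha(L^2_0(H^l))}$, and only then is the $L^2$ semigroup estimate invoked. The point is that $v$ stays on both sides throughout, so no auxiliary time growth is ever introduced.
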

  \begin{proof}
    Let $u\in L^2_{\beta+\alpha\gamma}(H^l)\cap L^2_{\beta+\alpha\gamma}(L^p)\subset L^2(\R^{2d})$, and $v=e^{tB}u$.
    Recall that $B=A+K$, by bounded perturbation, we have for $t\ge 0$,
    \begin{align}\label{V_Duhamel}
      v(t) = e^{tA}u + \int^t_0e^{(t-s)A}Ke^{sB}u\,ds.
    \end{align}
    Thus for $p\in [1,\infty]$, $\alpha_0>1$,
    \begin{align*}
      \|v(t)\|_{L^p_\beta(H^l)}
      &\le \|e^{tA}u\|_{L^p_\beta(H^l)} + \int^t_0\|e^{(t-s)A}Ke^{sB}u\|_{L^p_\beta(H^l)}\,ds\\
      &\le C_{\nu_0}(1+t)^{-\alpha}\|u\|_{L^p_{\beta+\alpha\gamma}(H^l)}
      + \int^t_0(1+t-s)^{-\alpha_0}\|Kv(s)\|_{L^p_{\beta+\alpha_0\gamma}(H^l)}\,ds\\
      &\le C_{\nu_0}(1+t)^{-\alpha}\|u\|_{L^p_{\beta+\alpha\gamma}(H^l)}
      + C_{\alpha,\alpha_0}(1+t)^{-\alpha}\, \|Kv\|_{L^\infty_\alpha(L^p_{\beta+\alpha_0\gamma}(H^l))}.
    \end{align*}
  Thus when $p=\infty$,
  \begin{align}\label{V_Linfty}
    \|v\|_{L^\infty_\alpha(L^\infty_\beta(H^l))}
    &\le C_{\nu_0,\gamma,\alpha,\alpha_0}\Big(\|u\|_{L^\infty_{\beta+\alpha\gamma}(H^l)}
    + \|v\|_{L^\infty_\alpha(L^\infty_{\beta+\alpha_0\gamma-\gamma-2}(H^l))}\Big).
  \end{align}
  Pick $\alpha_0\in (1, \frac{\gamma+2}{\gamma})$, then $\alpha_0\gamma-\gamma-2<0$. Using equation \eqref{V_Linfty} inductively, we have
  \begin{align*}
    \|v\|_{L^\infty_\alpha(L^\infty_\beta(H^l))}
    &\le C_{\nu_0,\gamma,\alpha,\alpha_0}\Big(\|u\|_{L^\infty_{\beta+\alpha\gamma}(H^l)}
    + \|v\|_{L^\infty_\alpha(L^\infty_0(H^l))}\Big).
  \end{align*}

   Recall the important property of $K$ from \ref{I_proerties_K} that for $p>\max(\frac{d}{d-\gamma},\frac{d}{2})$,
   $\theta\in(0,1)$,
  \begin{align}\label{V_eq138}
    \|Kf\|_{L^\infty_{\beta+\gamma+1}(H^l)}&\le C_{\gamma,d,q}\|f\|_{L^p_\beta(H^l)},\\
    \|Kf\|_{L^{q_\theta}_{\beta+\gamma+1}}&\le C_{\gamma,d,q,p,\theta}\|f\|_{L^{p_\theta}_\beta},\label{V_eq138_2}
  \end{align}
  with $\frac{1}{q_\theta} = \frac{\theta}{\infty} + \frac{1-\theta}{1},
    \frac{1}{p_\theta} = \frac{\theta}{p_0} + \frac{1-\theta}{1}$,
  where $p_0 = \frac{d}{d-\gamma}+\frac{d}{2}$.
  Pick a sequence $\{p_j\}^n_{j=1}\in(1,\infty)$ by letting
  \begin{align}\label{V_eq142}
    p_1=2,\ p_{j+1}=p_j + \frac{p_j(p_j-1)}{p_0-p_j},\ (j=1,2,\dots).
  \end{align}
  Then $\{p_j\}$ satisfies
  \begin{align}
    \frac{1}{p_{j+1}} = \frac{\theta}{\infty} + \frac{1-\theta}{1},\quad
    \frac{1}{p_j} = \frac{\theta}{p_0} + \frac{1-\theta}{1},\label{V_eq143}
  \end{align}where $\theta = \frac{1-1/p_j}{1-1/p_0}$.
  One can observe from \eqref{V_eq142} that $p_{j+1}-p_j\ge \frac{2}{p_0-2}$.
  Thus there exists a finite $n\in\N$ such that $p_{n-1}\le\max\left(\frac{d}{d-\gamma},\frac{d}{2}\right)<p_n$,
  so we can apply \eqref{V_eq138} to $p_n$. (Be careful that we can't use $p_{n+1}$, since $p_n$ may be larger than $p_0$ and we won't have \eqref{V_eq143} with $\theta>0$).
  Thus using Duhamel's formula \eqref{V_Duhamel} and the boundedness of $K$ \eqref{V_eq138}\eqref{V_eq138_2} inductively,
  \begin{align*}
    \|v\|_{L^\infty_\alpha(L^\infty_\beta(H^l))}
    &\le C_{\nu_0,\gamma,\alpha,\alpha_0,n}\Big(\|u\|_{L^\infty_{\beta+\alpha\gamma}(H^l)}
    +\sum^n_{j=0}\|u\|_{L^{p_j}_{\alpha\gamma}(H^l)}
    + \|v\|_{L^\infty_\alpha(L^2_0(H^l))}\Big).
  \end{align*}
  Pick $\beta>\frac{d}{2}$ to get that  $\|u\|_{L^{p_j}_{\alpha\gamma}(H^l)}\le\|u\|_{L^\infty_{\beta+\alpha\gamma}(H^l)}$, for $j=0,1,\dots,n$.
  Then by theorem \ref{V_thm1}, we have
  \begin{align*}
    \|v\|_{L^\infty_\alpha(L^\infty_\beta(H^l))}
    &\le C_{\nu_0,\gamma,\alpha,\alpha_0,n,r_3,d,\beta,p}\Big(\|u\|_{L^\infty_{\beta+\alpha\gamma}(H^l)}
    + \|u\|_{L^2_{\alpha\gamma}(L^p)}\Big).
  \end{align*}
  \qe\end{proof}

  The following lemma is well-studied in \cite{Ukai1982} and I will put the proof in appendix. 
  \begin{Lem}\label{V_lemma}Assume $\gamma\in[0,d)$, $\alpha\ge 0$.

    (1).
  For $l>\frac{d}{2}$, $\beta\in\R$,
  \begin{align}
    \|\Gamma(f,g)\|_{L^\infty_{\beta+\gamma}(H^l)}\le C_{\nu}\|f\|_{L^\infty_\beta(H^l)}\|g\|_{L^\infty_\beta(H^l)}.
  \end{align}

  (2).
  For $\beta>\frac{d}{2}-\gamma+\alpha\gamma$, $l\ge 0$,
  \begin{align}
    \|\Gamma(f,g)\|_{L^2_{\alpha\gamma}(L^1)}\le C_{\nu,\beta_0}\|f\|_{L^\infty_{\beta+\alpha\gamma}(H^l)}\|g\|_{L^\infty_{\beta+\alpha\gamma}(H^l)}.
  \end{align}
  \end{Lem}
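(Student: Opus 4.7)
My plan is to decompose $\Gamma(f,g)$ into its gain and loss pieces by using the collision invariance $\M(\xi)\M(\xi_*)=\M(\xi')\M(\xi'_*)$ to cancel the factor $\M^{-1/2}$ in $\M^{-1/2}Q(\M^{1/2}f,\M^{1/2}g)$, which yields
\[
\Gamma(f,g)(\xi)=\int_{\Rd}\!\!\int_{S^{d-1}}\!\M^{1/2}_*\bigl[f(\xi'_*)g(\xi')+f(\xi')g(\xi'_*)\bigr]q\,d\omega d\xi_*-g(\xi)\!\int\!\M^{1/2}_*f_*q\,d\omega d\xi_*-f(\xi)\!\int\!\M^{1/2}_*g_*q\,d\omega d\xi_*.
\]
The two loss terms are of exactly the same form as the linearized multiplier $\nu(\xi)$ of Theorem~\ref{I_ThmL1}(1): since the Gaussian $\M^{1/2}_*$ kills any polynomial weight in $\xi_*$, the inequality $\int\M^{1/2}_*(1+|\xi_*|)^{-s}|\xi-\xi_*|^{-\gamma}d\xi_*\le C_s(1+|\xi|)^{-\gamma}$ holds for every $s\in\R$ and every $\gamma\in[0,d)$. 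For the two gain terms the essential tool is the pair of weight-redistribution inequalities coming from energy conservation $|\xi'|^2+|\xi'_*|^2=|\xi|^2+|\xi_*|^2$,
\[
1+|\xi|\le\sqrt{2}(1+|\xi'|)(1+|\xi'_*|),\qquad 1+|\xi'|\le(1+|\xi|)(1+|\xi_*|),
\]
the first used when the relevant velocity-weight exponent is nonnegative, the second used together with the Gaussian decay of $\M^{1/2}_*$ when it is negative.

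For part (1), the Sobolev exponent $l>d/2$ makes $H^l(\R^d_x)$ a Banach algebra, so that $\|f(\cdot,\xi_1)g(\cdot,\xi_2)\|_{H^l_x}\le C_l\|f(\cdot,\xi_1)\|_{H^l}\|g(\cdot,\xi_2)\|_{H^l}$ for any pair $(\xi_1,\xi_2)$ of velocities. Bringing the $H^l_x$-norm inside the collision integral via Minkowski's inequality reduces the claim to a pointwise-in-$\xi$ bilinear estimate on functions of the velocity alone. Inserting $\|f(\cdot,\xi')\|_{H^l}\le(1+|\xi'|)^{-\beta}\|f\|_{L^\infty_\beta(H^l)}$ (and its analogues), applying the weight redistribution, and integrating against $\M^{1/2}_*|\xi-\xi_*|^{-\gamma}b(\cos\theta)$ produces a factor $(1+|\xi|)^{-\beta}\nu(\xi)\le C(1+|\xi|)^{-\beta-\gamma}$ on both the gain and the loss; multiplying through by $(1+|\xi|)^{\beta+\gamma}$ yields the asserted bound.

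For part (2), the target space $L^2_{\alpha\gamma}(L^1_x)$ is not algebra-type, so I integrate in $x$ at the outset: by Cauchy--Schwarz in $x$ followed by the embedding $H^l\hookrightarrow L^2$ for $l\ge 0$,
\[
\int_{\Rd}|f(x,\xi_1)g(x,\xi_2)|\,dx\le\|f(\cdot,\xi_1)\|_{H^l}\|g(\cdot,\xi_2)\|_{H^l}.
\]
This lets me repeat the gain/loss analysis of part (1) with weight exponent $\beta+\alpha\gamma$ in place of $\beta$ and recover
\[
\|\Gamma(f,g)(\cdot,\xi)\|_{L^1_x}\le C(1+|\xi|)^{-(\beta+\alpha\gamma)-\gamma}\|f\|_{L^\infty_{\beta+\alpha\gamma}(H^l)}\|g\|_{L^\infty_{\beta+\alpha\gamma}(H^l)}.
\]
Multiplying by the weight $(1+|\xi|)^{\alpha\gamma}$ and then taking the $L^2_\xi$ norm reduces the bound to the convergence of $\int(1+|\xi|)^{-2(\beta+\gamma)}d\xi$, i.e.\ to $\beta+\gamma>d/2$; the stated hypothesis $\beta>\frac{d}{2}-\gamma+\alpha\gamma$ is a simple sufficient condition that simultaneously guarantees this integrability and keeps $\beta+\alpha\gamma$ above the threshold needed for the weight-redistribution step in the gain term.

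The main obstacle is the weight-redistribution step for the gain when the effective exponent is negative, since $(1+|\xi'|)^{-s}(1+|\xi'_*|)^{-s}\lesssim(1+|\xi|)^{-s}$ is only valid for $s\ge 0$. In the opposite regime one must switch to $1+|\xi'|\le(1+|\xi|)(1+|\xi_*|)$ (and the analogue for $\xi'_*$) and then absorb the extra powers of $(1+|\xi_*|)$ inside the $\xi_*$-integration against $\M^{1/2}_*$. The explicit lower bound on $\beta$ in part (2) is precisely what keeps this trade compatible with the target weight $\alpha\gamma$, and tracking sign cases here is the only nontrivial bookkeeping in the proof.
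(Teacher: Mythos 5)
Your proof follows essentially the same route as the paper's: both cancel $\M^{-1/2}$ through $\M\M_*=\M'\M'_*$ so that only $\M_*^{1/2}$ remains, bound the loss terms by the $\nu$-estimate, redistribute the weight on the gain terms via $1+|\xi|\le(1+|\xi'|)(1+|\xi'_*|)$, and use the Banach-algebra property of $H^l$ ($l>\frac{d}{2}$) under Minkowski's inequality for part (1). In part (2) you differ only in bookkeeping: you prove the pointwise-in-$\xi$ bound $\|\Gamma(f,g)(\cdot,\xi)\|_{L^1_x}\lesssim(1+|\xi|)^{-(\beta+\alpha\gamma)-\gamma}\|f\|_{L^\infty_{\beta+\alpha\gamma}(H^l)}\|g\|_{L^\infty_{\beta+\alpha\gamma}(H^l)}$ (Cauchy--Schwarz in $x$ plus $H^l\hookrightarrow L^2$) and then integrate in $\xi$, needing $\beta+\gamma>\frac{d}{2}$; the paper instead first dominates the $L^2_{\alpha\gamma}(L^1)$-norm by an $L^\infty_{\beta_0+\alpha\gamma}$-norm with an auxiliary $\beta_0\in(\frac{d}{2},\beta+\gamma-\alpha\gamma)$ and then applies the scalar form of part (1) to $\|f\|_{L^2_x}$, $\|g\|_{L^2_x}$. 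These are the same estimate organized differently, and both exploit the hypothesis $\beta>\frac{d}{2}-\gamma+\alpha\gamma$ only to make room for the exponents.

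The one genuine flaw is your claimed handling of a negative weight exponent in the gain term. For $s<0$, the switch $1+|\xi'|\le(1+|\xi|)(1+|\xi_*|)$ (and its analogue for $\xi'_*$) gives $(1+|\xi'|)^{-s}(1+|\xi'_*|)^{-s}\le(1+|\xi|)^{-2s}(1+|\xi_*|)^{-2s}$; the Gaussian $\M_*^{1/2}$ does absorb the $(1+|\xi_*|)$ powers, but you are left with $(1+|\xi|)^{-2s}$ where the target allows only $(1+|\xi|)^{-s}$, i.e.\ an uncontrolled extra factor $(1+|\xi|)^{|s|}$. This cannot be repaired by bookkeeping: taking $f=g=(1+|\xi|)^{-\beta}$ (constant in $x$), on a fixed fraction of angles both $|\xi'|$ and $|\xi'_*|$ are comparable to $|\xi|$, so the gain term is bounded below by $c(1+|\xi|)^{-2\beta-\gamma}$, and the estimate of part (1) with weight gain $\gamma$ genuinely fails for $\beta<0$. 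To be fair, the paper's own proof has the same blind spot: its displayed chain uses $(1+|\xi|)^{\beta}\le(1+|\xi'|)^{\beta}(1+|\xi'_*|)^{\beta}$, which is valid only for $\beta\ge0$, so the stated range $\beta\in\R$ is an overstatement. Wherever the lemma is actually invoked (with $\beta>\frac{d}{2}$, so that the redistributed exponents, including $\beta+\alpha\gamma$ in your version of part (2), are nonnegative) your argument is correct and coincides with the paper's; just drop the negative-exponent claim, or restrict part (1) to $\beta\ge0$.
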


  \begin{Thm} Assume the cross-section $q$ satisfies the angular cut-off assumption \eqref{I_cutoffassumption}.
  Assume $d\ge 3$, $\gamma\in[0,d)$, $l>\frac{d}{2}$, $\beta>\frac{d}{2}$, $p\in[1,2)$ such that $\frac{d}{4}(\frac{2}{p}-1)>1/2$. Let $\alpha\in [\frac{1}{2},\min(\frac{d}{4}(\frac{2}{p}-1),1))$. 
  There exists constants $A_0<1, A_1$ such that if the initial data $f_0\in L^\infty_{\beta+\alpha\gamma}(H^l)\cap L^2_{\alpha\gamma}(L^p)$ satisfies
  \begin{align}
    \|f_0\|_{L^\infty_{\beta+\alpha\gamma}(H^l)} + \|f_0\|_{L^2_{\alpha\gamma}(L^p)}\le A_0.
  \end{align}
  Denote $X=\{f\in L^\infty_{\alpha}(L^\infty_{\beta}(H^l)): \|f\|_{L^\infty_{\alpha}(L^\infty_{\beta}(H^l))}\le A_1\}$.
  Then the Cauchy problem to Boltzmann equation
  \begin{equation}\left\{
    \begin{aligned}
      f_t + \xi\cdot\nabla_x f &= Lf + \Gamma(f,f),\\
      f|_{t=0} &= f_0.
    \end{aligned}\right.
  \end{equation}
  posseses a unique solution $f=f(t)\in X \cap BC^0([0,\infty);L^\infty_{\beta}(H^l))\cap BC^1([0,\infty);L^\infty_{\beta-1}(H^{l-1}))$ and
  \begin{align}
    \|f\|_{L^\infty_{\alpha}(L^\infty_{\beta}(H^l))} + \|\partial_tf\|_{L^\infty_{\alpha}(L^\infty_{\beta-1}(H^{l-1}))}
    &\le C_{\nu,\gamma,\alpha,\beta,d}\Big(\|f_0\|_{L^\infty_{\beta+\alpha\gamma}(H^l)} + \|f_0\|_{L^2_{\alpha\gamma}(L^p)}\Big).
  \end{align}The uniqueness is taken in the sense that
  $f\in X$.
  \end{Thm}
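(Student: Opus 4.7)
The plan is to solve the Cauchy problem by a contraction mapping argument on $X$ applied to the Duhamel formulation
\begin{align*}
(Tf)(t) := e^{tB}f_0 + \int_0^t e^{(t-s)B}\Gamma(f(s),f(s))\,ds,
\end{align*}
with the semigroup estimate \eqref{estimate_semi} as the linear building block and the bilinear $\Gamma$ estimates of Lemma \ref{V_lemma} supplying the nonlinear control at the decay rate $(1+t)^{-\alpha}$.

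For the self-mapping $T:X\to X$, I would first apply \eqref{estimate_semi} directly to $e^{tB}f_0$, producing the bound $CA_0$. For the Duhamel integral I would multiply by $(1+t)^\alpha$, take the supremum over $t$, and apply \eqref{estimate_semi} to $\Gamma(f(s),f(s))$ after the time shift, which yields a factor $(1+t-s)^{-\alpha}$ times
\begin{align*}
\|\Gamma(f(s),f(s))\|_{L^\infty_{\beta+\alpha\gamma}(H^l)} + \|\Gamma(f(s),f(s))\|_{L^2_{\alpha\gamma}(L^p)}.
\end{align*}
Taking $p=1$ and using $\alpha\le 1$ so that $\beta+\alpha\gamma\le\beta+\gamma$, Lemma \ref{V_lemma} then bounds the quantity above by $C\|f(s)\|_{L^\infty_\beta(H^l)}^2 \le CA_1^2(1+s)^{-2\alpha}$. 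The time-convolution lemma with $\alpha_0=2\alpha>1$ gives $\|Tf\|_X \le CA_0 + CA_1^2$; choosing first $A_1$ so small that $CA_1\le 1/2$ and then $A_0$ so small that $CA_0\le A_1/2$ closes the self-mapping.

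For the contraction, I would exploit the bilinearity $\Gamma(f,f)-\Gamma(g,g) = \Gamma(f-g,f)+\Gamma(g,f-g)$ and rerun exactly the same computation to obtain $\|Tf-Tg\|_X \le CA_1\|f-g\|_X$, which is a strict contraction for $A_1$ small; Banach's theorem then delivers the unique fixed point $f\in X$. For the continuity and differentiability conclusions, strong continuity of $e^{tB}$ on $L^\infty_\beta(H^l)$ together with continuity of $s\mapsto\Gamma(f(s),f(s))$ yields $f\in BC^0([0,\infty);L^\infty_\beta(H^l))$, and the equation $\partial_tf = -\xi\cdot\nabla_xf + Lf + \Gamma(f,f)$ furnishes the $C^1$-in-$t$ bound in $L^\infty_\alpha(L^\infty_{\beta-1}(H^{l-1}))$: the transport term $\xi\cdot\nabla_xf$ costs one weight in $\xi$ and one $x$-derivative, while $Lf$ and $\Gamma(f,f)$ are already comfortably controlled by the $L^\infty_\beta(H^l)$ norm.

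The main obstacle is the sharp matching of weights and decay rates in the convolution step: the hypothesis $\alpha \ge 1/2$ places $2\alpha$ exactly at the threshold of the time-convolution lemma, so strictly the argument requires $\alpha>1/2$ (or a logarithmic factor absorbed via the bootstrap), and the upper bound $\alpha<\tfrac{d}{4}(\tfrac{2}{p}-1)$ is precisely what is needed by Theorem \ref{V_thm1} for the semigroup to supply this polynomial decay in the first place. The conditions $l>d/2$ and $\beta>d/2$ are what make $H^l$ a function algebra (crucial for the $\Gamma$ estimates) and allow $L^\infty_\beta(H^l)$ to absorb the intermediate $L^{p_j}$ norms arising inside the semigroup analysis.
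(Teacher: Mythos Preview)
Your proposal is correct and follows essentially the same route as the paper: Duhamel formulation, the semigroup bound \eqref{estimate_semi} applied separately to $e^{tB}f_0$ and (with $p=1$) to $e^{(t-s)B}\Gamma$, the bilinear estimates of Lemma \ref{V_lemma}, the time-convolution lemma with $\alpha_0=2\alpha$, and a contraction on $X$. The only cosmetic differences are that the paper writes $\Gamma(f,f)-\Gamma(g,g)=\Gamma(f+g,f-g)$ and selects the constants by solving the quadratic $C_2C_5^2-C_5+C_1(\text{data})=0$ rather than choosing $A_1$ then $A_0$ sequentially; your observation about the endpoint $\alpha=1/2$ is apt---the paper's own proof in fact silently restricts to $\alpha\in(\tfrac12,1)$ when invoking the convolution lemma.
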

  \begin{proof}
  By semigroup theory, it suffices to find the fixed point of
    \begin{align}
      \Phi[f] := e^{tB}f_0 + \int^t_0e^{(t-s)B}\Gamma(f(s),f(s))\,ds.
    \end{align}

    Now pick $\alpha\in (0,\frac{d}{4})\cap (0,1)$,
    $\beta>\frac{d}{2}$, then
    \begin{align}
      \|\Phi[f]\|_{L^\infty_\beta(H^l)}\label{V_esti_Phi}
      &\le \|e^{tB}f_0\|_{L^\infty_\beta(H^l)} +\int^t_0\|e^{(t-s)B}\Gamma(f(s),f(s))\|_{L^\infty_\beta(H^l)}\,ds.
    \end{align}
    For the first term, using \eqref{estimate_semi}, we have 
    \begin{align*}
      \|e^{tB}f_0\|_{L^\infty_\beta(H^l)}\le  \frac{C_{\nu,\gamma,\alpha,\beta,d}}{(1+t)^\alpha}\Big(\|f_0\|_{L^\infty_{\beta+\alpha\gamma}(H^l)} + \|f_0\|_{L^2_{\alpha\gamma}(L^p)}\Big).
    \end{align*}
    Pick $\alpha\in(0,\frac{d}{4})\cap (\frac{1}{2},1)$, which is non-empty since $d\ge 3$, then the second term in \eqref{V_esti_Phi} becomes
    \begin{align*}
      &\int^t_0\|e^{(t-s)B}\Gamma(f(s),g(s))\|_{L^\infty_\beta(H^l)}\,ds\\
      &\le C_{\nu,\gamma,\alpha,\beta,d} \int^t_0 \frac{1}{(1+t-s)^{\alpha}}\Big(\|\Gamma(f(s),g(s))\|_{L^\infty_{\beta+\alpha\gamma}(H^l)} + \|\Gamma(f(s),g(s))\|_{L^2_{\alpha\gamma}(L^1)}\Big)\,ds\\
      &\le C_{\nu,\gamma,\alpha,\beta,d} \int^t_0 \frac{1}{(1+t-s)^{\alpha}}
      \|f(s)\|_{L^\infty_{\beta}(H^l)}\|g(s)\|_{L^\infty_{\beta}(H^l)}\,ds\\
      &\le C_{\nu,\gamma,\alpha,\beta,d} \int^t_0 \frac{1}{(1+t-s)^{\alpha}(1+s)^{2\alpha}}\,ds\
      \|f\|_{L^\infty_{\alpha}(L^\infty_{\beta}(H^l))}\|g\|_{L^\infty_{\alpha}(L^\infty_{\beta}(H^l))}\\
      &\le C_{\nu,\gamma,\alpha,\beta,d} (1+t)^{-\alpha}
      \|f\|_{L^\infty_{\alpha}(L^\infty_{\beta}(H^l))}\|g\|_{L^\infty_{\alpha}(L^\infty_{\beta}(H^l))}.
    \end{align*}
    where the first inequality follows from \eqref{estimate_semi}. Thus
    \begin{align*}
      \|\Phi[f]\|_{L^\infty_\alpha(L^\infty_\beta(H^l))}
      &\le C_{\nu,\gamma,\alpha,\beta,d}\Big(\|f_0\|_{L^\infty_{\beta+\alpha\gamma}(H^l)} + \|f_0\|_{L^2_{\alpha\gamma}(L^p)} + \|f\|^2_{L^\infty_{\alpha}(L^\infty_{\beta}(H^l))}\Big)\\
      &=: C_1\Big(\|f_0\|_{L^\infty_{\beta+\alpha\gamma}(H^l)} + \|f_0\|_{L^2_{\alpha\gamma}(L^p)}\Big)+ C_2\|f\|^2_{L^\infty_{\alpha}(L^\infty_{\beta}(H^l))}.
    \end{align*}

    On the other hand, noticing that $
      \Gamma(f,f)-\Gamma(g,g) = \Gamma(f+g,f-g)$, we have
    \begin{align*}
      \|\Phi[f]-\Phi[g]\|_{L^\infty_\beta(H^l)}\notag
      &\le\int^t_0\|e^{(t-s)B}\Gamma((f+g)(s),(f-g)(s))\|_{L^\infty_\beta(H^l)}\,ds\\
      &\le C_{\nu,\gamma,\alpha,\beta,d} (1+t)^{-\alpha}
      \|f+g\|_{L^\infty_{\alpha}(L^\infty_{\beta}(H^l))}\|f-g\|_{L^\infty_{\alpha}(L^\infty_{\beta}(H^l))},
    \end{align*}and
    \begin{align*}
      \|\Phi[f]-\Phi[g]\|_{L^\infty_\alpha(L^\infty_\beta(H^l))}
      &\le C_2\|f+g\|_{L^\infty_{\alpha}(L^\infty_{\beta}(H^l))}\|f-g\|_{L^\infty_{\alpha}(L^\infty_{\beta}(H^l))},
    \end{align*}by letting $C_2>\frac{1}{C_1}$ large enough.
  Define $C_3 := \frac{1}{4C_1C_2}$, then if $\|f_0\|_{L^\infty_{\beta+\alpha\gamma}(H^l)} + \|f_0\|_{L^2_{\alpha\gamma}(L^p)}<C_3$, we can let
  \begin{align*}
    C_4:= 1-4C_1C_2\Big(\|f_0\|_{L^\infty_{\beta+\alpha\gamma}(H^l)} + \|f_0\|_{L^2_{\alpha\gamma}(L^p)}\Big)>0.
  \end{align*}
  Choose $C_5:=\frac{1}{2C_2}(1-\sqrt{C_4})$,
  then $
    C_2C_5^2-C_5+C_1(\|f_0\|_{L^\infty_{\beta+\alpha\gamma}(H^l)} + \|f_0\|_{L^2_{\alpha\gamma}(L^p)})=0$.

  Finally we pick a normed space
  \begin{align*}
    X:= \{f\in L^\infty_{\alpha}(L^\infty_{\beta}(H^l)):\|f\|_{L^\infty_{\alpha}(L^\infty_{\beta}(H^l))}\le C_5\}.
  \end{align*}
  Then $X$ is a complete with norm $\|\cdot\|_{L^\infty_{\alpha}(L^\infty_{\beta}(H^l))}$,
   and for $f\in X$,
  \begin{align*}
    \|\Phi[f]\|_{L^\infty_{\alpha}(L^\infty_{\beta}(H^l))}
    &\le C_1\Big(\|f_0\|_{L^\infty_{\beta+\alpha\gamma}(H^l)} + \|f_0\|_{L^2_{\alpha\gamma}(L^p)}\Big)+ C_2C_5^2
    = C_5,\\
    \|\Phi[f]-\Phi[g]\|_{L^\infty_\alpha(L^\infty_\beta(H^l))}
    &\le 2C_2C_5\|f-g\|_{L^\infty_{\alpha}(L^\infty_{\beta}(H^l))}
    = (1-\sqrt{C_5})\|f-g\|_{L^\infty_{\alpha}(L^\infty_{\beta}(H^l))}.
  \end{align*}
  This proves that $\Phi$ is a contraction map on $X$. Thus there exists a unique fixed point $f\in X$ to $\Phi$,
  which is the solution of
  \begin{align*}
    f := e^{tB}f_0 + \int^t_0e^{(t-s)B}\Gamma(f(s),f(s))\,ds.
  \end{align*}with 
  \begin{align*}
  \|f\|_{L^\infty_{\alpha}(L^\infty_{\beta}(H^l))}
  &\le C_5 = \frac{1}{2C_2}(1-\sqrt{C_4}) = \frac{2C_1}{1+\sqrt{C_4}}\Big(\|f_0\|_{L^\infty_{\beta+\alpha\gamma}(H^l)} + \|f_0\|_{L^2_{\alpha\gamma}(L^p)}\Big).
  \end{align*}
  For the continuity, we have
  \begin{align*}
    \|f(t+h)-f(t)\|_{L^\infty_\beta(H^l)} &\le \|(e^{hB}-I)e^{tB}f_0\|_{L^\infty_\beta(H^l)}\\
    &\qquad\qquad + \int^{t+h}_t\|e^{(t+h-s)B}\Gamma(f(s),f(s))\|_{L^\infty_\beta(H^l)}\,ds\\
    &\qquad\qquad + \|(e^{hB}-I)\int^t_0e^{(t-s)B}\Gamma(f(s),f(s))\,ds\|_{L^\infty_\beta(H^l)}.
  \end{align*} Then we can obtain from the continuity with respect to $t$ of semigroup $e^{tB}$ on $L^\infty_\beta(H^l)$ that $f\in C^0([0,\infty);L^\infty_\beta(H^l))$.
  Also $f_0\in L^\infty_{\beta+\alpha\gamma}(H^l) \subset D_\infty(B)$, so by the time decay estimate on $e^{tB}$, together with $\|Bu\|_{L^\infty_{\beta-1}(H^{l-1})}\le C\|u\|_{L^\infty_{\beta}(H^{l})}$, we have
  \begin{align*}
  \partial_t f &= Be^{tB}f_0 + \Gamma(f(s),f(s)) + \int^t_0Be^{(t-s)B}\Gamma(f(s),f(s))\,ds,\\
  \|\partial_t f\|_{L^\infty_{\beta-1}(H^{l-1})} &\le \frac{C}{(1+t)^\alpha}\Big(\|f_0\|_{L^\infty_{\beta+\alpha\gamma}(H^l)} + \|f_0\|_{L^2_{\alpha\gamma}(L^p)} + \|f\|^2_{L^\infty_\alpha(L^\infty_{\beta-1-\gamma}(H^{l-1}))} + \|f\|^2_{L^\infty_\alpha(L^\infty_{\beta}(H^{l}))}\Big).
  \end{align*} Thus $\partial_tf\in BC^0([0,\infty);L^\infty_{\beta-1}(H^{l-1}))$. Since $\|f_0\|_{L^\infty_{\beta+\alpha\gamma}(H^l)} + \|f_0\|_{L^2_{\alpha\gamma}(L^p)}<1$, we get the energy estimate on $\partial_tf$ which has time decay $(1+t)^{-\alpha}$. Hence by theorem \ref{semigroup_solution}, $f|_{t=0}=f_0$ and
  \begin{align*}
  \partial_t f + \xi\cdot\nabla_x f &= Lf + \Gamma(f,f).
  \end{align*}
  
  \qe\end{proof}

  \section{Appendix}

	\subsection{Semigroup Theory}
	Here we give some useful semigroup theory, one may refer to \cite{Engel1999,Gohberg1990} for more details.
	\begin{Def}\label{adjoint}
		Let $(A,D(A))$ be a linear unbounded densely defined operator from Hilbert space $H_1$ into Hilbert space $H_2$ with domain $D(A)$. Define 
		\begin{align*}
		D(A^*) = \Big\{y\in H_2\big| \sup_{0\neq x \in D(A)}\frac{\<Ax,y\>}{\|x\|} \Big\}.
		\end{align*}
		Take $y\in D(A^*)$, since $\overline{D(A)}=H_1$, the functional $F_y(x):=\<Ax,y\>$ has a unique bounded extension on $H_1$. Hence Riesz representation theorem ensures the existence of a unique $z\in H_1$ such that $F_y(x) = \<x,z\>$. Define $A^*y:=z$, then 
		\begin{align*}
		\<Ax,y\> = \<x,A^*y\>.
		\end{align*}The operator $A^*(H_2\to H_1)$ is linear and is called the adjoint of $A$.
	\end{Def}
	\begin{Thm}\label{semigroup_solution}
		Let $A(X\to X)$ be the generator of the strongly
		continuous semigroup $T(\cdot)$. Suppose that $f: [0,\infty) \to X$ is continuously differentiable on $[0,\infty)$. Then for each $x\in D(A)$, there exists a unique solution to
		\begin{equation*}\left\{
		\begin{aligned}
		u'(t) &= Au(t) + f(t),\quad 0<t<\infty,\\
		u(0) &= x. 
		\end{aligned}\right.
		\end{equation*}
		This solution is continuously differentiable and is given by
		\begin{align*}
		u(t) = T(t)x + \int^t_0T(t-s)f(s)ds,\quad t > 0.
		\end{align*} 
	\end{Thm}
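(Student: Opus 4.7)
The plan is to prove uniqueness first by an energy/invariance argument along the semigroup, and then verify by direct computation that the Duhamel formula $u(t)=T(t)x+\int_0^t T(t-s)f(s)\,ds$ provides the required solution. Throughout I will use the standard facts that $t\mapsto T(t)$ is strongly continuous on $X$, that $T(t)$ commutes with $A$ on $D(A)$, and that $(d/dt)T(t)x=AT(t)x=T(t)Ax$ for $x\in D(A)$; these are part of the Hille--Yosida/$C_0$-semigroup package already cited.

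For uniqueness, suppose $u_1,u_2$ are two $C^1$ solutions and set $w:=u_1-u_2$. Then $w(0)=0$ and $w'(t)=Aw(t)$, with $w(t)\in D(A)$. For fixed $t>0$ I consider the auxiliary map $\varphi(s):=T(t-s)w(s)$ on $[0,t]$. Using the product rule together with the identity $(d/ds)T(t-s)y=-AT(t-s)y$ for $y\in D(A)$, I would compute
\begin{align*}
\varphi'(s)=-AT(t-s)w(s)+T(t-s)w'(s)=-T(t-s)Aw(s)+T(t-s)Aw(s)=0.
\end{align*}
Hence $\varphi$ is constant on $[0,t]$, so $w(t)=\varphi(t)=\varphi(0)=T(t)w(0)=0$, giving $u_1\equiv u_2$.

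For existence, define $v(t):=\int_0^t T(t-s)f(s)\,ds$ and $u(t):=T(t)x+v(t)$. Since $x\in D(A)$, the first summand contributes $(d/dt)T(t)x=AT(t)x$, so it is enough to prove that $v(t)\in D(A)$ and $v'(t)=Av(t)+f(t)$. I would compute the right difference quotient of $T(\cdot)$ applied to $v(t)$ by splitting an integral after the translation $s\mapsto s+h$:
\begin{align*}
\frac{T(h)-I}{h}v(t)=\frac{1}{h}\!\int_{-h}^{0}\!T(t-s)f(s+h)\,ds+\!\int_0^{t-h}\!T(t-s)\frac{f(s+h)-f(s)}{h}\,ds-\frac{1}{h}\!\int_{t-h}^{t}\!T(t-s)f(s)\,ds.
\end{align*}
As $h\downarrow 0$, strong continuity of $T(\cdot)$ and the $C^1$ hypothesis on $f$ force the three terms to converge in $X$ to $T(t)f(0)$, $\int_0^t T(t-s)f'(s)\,ds$, and $f(t)$ respectively. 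Since the generator $A$ is closed, this shows $v(t)\in D(A)$ with
\begin{align*}
Av(t)=T(t)f(0)+\int_0^t T(t-s)f'(s)\,ds-f(t).
\end{align*}
Independently I would compute $v'(t)$ by writing $v(t)=\int_0^t T(\sigma)f(t-\sigma)\,d\sigma$ (change of variable $\sigma=t-s$) and differentiating through the integral, which gives exactly $v'(t)=T(t)f(0)+\int_0^t T(t-s)f'(s)\,ds$. Comparing the two formulas yields $v'(t)=Av(t)+f(t)$, and hence $u'(t)=Au(t)+f(t)$ with $u(0)=x$. Continuity of $u'$ follows from continuity of the three terms in the computation of $Av(t)$.

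The main obstacle is the step showing $v(t)\in D(A)$: one cannot simply differentiate $T(t-s)f(s)$ inside the integral since $f(s)$ is not assumed to lie in $D(A)$. The trick is to move the difference quotient off of $T$ and onto $f$ using a translation of the integration variable, thereby converting the required regularity from ``$f(s)\in D(A)$'' into ``$f\in C^1$'', after which the closedness of $A$ delivers membership in $D(A)$.
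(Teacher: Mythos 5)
Your proposal is correct. Note, however, that the paper never proves this statement: it appears in the appendix as part of a list of standard semigroup facts and is justified only by pointing to the cited semigroup references (Engel--Nagel, Gohberg et al.), so there is no in-paper argument to compare with. What you give is the classical textbook proof: uniqueness by showing that $s\mapsto T(t-s)w(s)$ has vanishing derivative on $[0,t]$ (legitimate because any classical solution takes values in $D(A)$, which your computation uses implicitly), and existence by moving the difference quotient off $T$ and onto the $C^1$ inhomogeneity, then computing $v'(t)$ from the representation $v(t)=\int_0^t T(\sigma)f(t-\sigma)\,d\sigma$ and comparing. All limits you claim do hold, using local uniform boundedness of $T(\cdot)$, strong continuity, and uniform convergence of the difference quotients of $f$ on compact intervals. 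One small correction of attribution: closedness of $A$ is not what yields $v(t)\in D(A)$; the existence of the limit of $h^{-1}\bigl(T(h)-I\bigr)v(t)$ as $h\downarrow 0$ is precisely the definition of $v(t)\in D(A)$ and of $Av(t)$, so you should simply invoke the definition of the generator at that step (closedness would only be needed in an alternative route where one passes $A$ through the integral by approximation). With that cosmetic fix, the argument is complete and fully consistent with the result as the paper uses it.
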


	\begin{Thm}(Hille-Yosida Theorem)
		For a linear operator $(A,D(A))$ on a Banach space $X$, the following properties are equivalent.
		
		(a) $(A,D(A))$ generates a strongly continuous contraction semigroup.
		
		(b) $(A,D(A))$ is closed, densely defined, and for every $\lambda\in\C$ with $\Re\lambda>0$ one has $\lambda\in\rho(A)$ and
	$\|(\lambda I-A)^{-1}\|\le \frac{1}{\Re\lambda}$.
	\end{Thm}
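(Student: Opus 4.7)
The plan is to prove the two implications separately. The easy direction (a)$\Rightarrow$(b) is a Laplace-transform computation, while (b)$\Rightarrow$(a) is the classical Yosida approximation argument. I will not attempt to shorten or modernize the proof; the clarity here comes from executing the standard construction carefully.

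For (a)$\Rightarrow$(b), assume $A$ generates the contraction semigroup $T(t)$. Standard semigroup arguments give that $A$ is automatically closed and densely defined: density follows because $\frac{1}{t}\int_0^t T(s)x\,ds\to x$ as $t\to 0^+$ and these averages lie in $D(A)$; closedness follows from $T(t)x-x=\int_0^t T(s)Ax\,ds$ for $x\in D(A)$. For $\Re\lambda>0$, I would define
\[
R(\lambda)x := \int_0^\infty e^{-\lambda t}T(t)x\,dt,
\]
which converges absolutely since $\|e^{-\lambda t}T(t)x\|\le e^{-(\Re\lambda)t}\|x\|$, giving immediately $\|R(\lambda)\|\le 1/\Re\lambda$. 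A short computation using the semigroup property shows $R(\lambda)(\lambda I-A)x=x$ for $x\in D(A)$ and $(\lambda I-A)R(\lambda)y=y$ for $y\in X$, so $\lambda\in\rho(A)$ with $(\lambda I-A)^{-1}=R(\lambda)$ and the desired norm bound.

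The harder direction (b)$\Rightarrow$(a) proceeds by Yosida approximation. Set $J_n := n(nI-A)^{-1}$ and $A_n := AJ_n$; using the identity $A(nI-A)^{-1}=n(nI-A)^{-1}-I$, one obtains $A_n=nJ_n-nI$, which is bounded with $\|A_n\|\le 2n$ since hypothesis (b) at $\lambda=n$ yields $\|J_n\|\le 1$. First I would show $J_n x\to x$ for every $x\in X$, proving it on $D(A)$ via $\|J_n x-x\|=\|(nI-A)^{-1}Ax\|\le \|Ax\|/n\to 0$ and extending by density using $\|J_n\|\le 1$. Consequently $A_n x = J_n Ax\to Ax$ for $x\in D(A)$. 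Next, since each $A_n$ is bounded, the uniformly continuous semigroup $T_n(t):=e^{tA_n}$ is well defined, and the decomposition $A_n=nJ_n-nI$ yields the crucial contractivity
\[
\|T_n(t)\|=e^{-nt}\bigl\|e^{tnJ_n}\bigr\|\le e^{-nt}e^{tn\|J_n\|}\le 1.
\]
The operators $A_n,A_m$ commute (both are polynomials in the commuting resolvents $(nI-A)^{-1}$, $(mI-A)^{-1}$), so for $x\in D(A)$ the comparison identity $T_n(t)x-T_m(t)x=\int_0^t T_m(t-s)T_n(s)(A_n-A_m)x\,ds$ gives $\|T_n(t)x-T_m(t)x\|\le t\|A_n x-A_m x\|\to 0$.

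Therefore $\{T_n(t)x\}$ is Cauchy in $X$, uniformly on compact time intervals, for $x\in D(A)$, and by density together with the uniform bound $\|T_n(t)\|\le 1$ the convergence extends to every $x\in X$. Defining $T(t)x:=\lim_n T_n(t)x$, the semigroup property, strong continuity, and contractivity pass to the limit. To identify the generator $A'$ of $T(t)$ and conclude $A'=A$, I would use that for $x\in D(A)$,
\[
T(t)x-x = \lim_{n\to\infty}\int_0^t T_n(s)A_n x\,ds = \int_0^t T(s)Ax\,ds,
\]
so $A\subset A'$; then both $A$ and $A'$ satisfy the resolvent condition at $\lambda=1$, giving $(I-A)D(A)=X=(I-A')D(A')$, which combined with injectivity of $I-A'$ forces $D(A)=D(A')$ and hence $A=A'$. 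The main obstacle in the whole argument is obtaining the contractivity of $T_n(t)$ together with the commuting-semigroup comparison: both hinge entirely on the resolvent bound $\|n(nI-A)^{-1}\|\le 1$ supplied by hypothesis (b), which is precisely why (b) is stated in that form.
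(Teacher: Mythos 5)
Your proof is correct: the Laplace-transform argument for (a)$\Rightarrow$(b) and the Yosida-approximation scheme ($J_n=n(nI-A)^{-1}$, $A_n=nJ_n-nI$, contractivity of $e^{tA_n}$, the commuting-semigroup comparison, and the generator identification via surjectivity of $I-A$ and injectivity of $I-A'$) are all carried out soundly. Note that the paper itself gives no proof of this theorem — it is quoted in the appendix as a standard result with references to the semigroup literature — and your argument is precisely the classical proof found in those sources, so there is nothing to reconcile.
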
 
\begin{Def}
	A linear operator $(A,D(A))$ on a Banach space $X$ is called dissipative if
	$\|(\lambda I - A)x\|\ge \lambda \|x\|$
	for all $\lambda > 0$ and $x\in D(A)$.
\end{Def}
\begin{Prop}
	An operator $(A,D(A))$ is dissipative if and only if for every $x \in D(A)$ there exists $j(x) \in \{x'\in X':\<x,x'\>=\|x\|^2=\|x'\|^2\}$ such that
	\begin{align}
	\Re\<Ax,j(x)\>\le 0.
	\end{align}
\end{Prop}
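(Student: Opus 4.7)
The plan is to prove both directions directly from the definitions, using a Hahn--Banach/Banach--Alaoglu argument for the nontrivial implication.

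For the easy direction $(\Leftarrow)$, I would fix $x \in D(A)$ with $x \ne 0$, take $j(x) \in X'$ with $\langle x, j(x)\rangle = \|x\|^2 = \|j(x)\|^2$ and $\Re\langle Ax, j(x)\rangle \le 0$, and then for $\lambda > 0$ estimate
\begin{align*}
\|(\lambda I - A)x\|\,\|x\| = \|(\lambda I - A)x\|\,\|j(x)\| &\ge |\langle (\lambda I - A)x, j(x)\rangle| \\
&\ge \Re\langle (\lambda I - A)x, j(x)\rangle = \lambda\|x\|^2 - \Re\langle Ax, j(x)\rangle \ge \lambda\|x\|^2,
\end{align*}
which gives $\|(\lambda I - A)x\| \ge \lambda\|x\|$ after dividing by $\|x\|$ (the $x=0$ case is trivial).

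For the harder direction $(\Rightarrow)$, I would fix $x \in D(A)$ with $x \ne 0$ and, for each $\lambda > 0$, use Hahn--Banach to produce $y_\lambda \in X'$ with $\|y_\lambda\| = 1$ and $\langle (\lambda I - A)x, y_\lambda\rangle = \|(\lambda I - A)x\|$, which is real. Dissipativity then gives
\begin{align*}
\lambda\|x\| \le \|(\lambda I - A)x\| = \lambda\Re\langle x, y_\lambda\rangle - \Re\langle Ax, y_\lambda\rangle,
\end{align*}
so dividing by $\lambda$ yields $\Re\langle x, y_\lambda\rangle \ge \|x\| + \frac{1}{\lambda}\Re\langle Ax, y_\lambda\rangle \ge \|x\| - \frac{\|Ax\|}{\lambda}$. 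Since also $|\langle x, y_\lambda\rangle| \le \|x\|$, letting $\lambda \to \infty$ forces $\langle x, y_\lambda\rangle \to \|x\|$, and moreover the inequality $-\Re\langle Ax, y_\lambda\rangle \ge \lambda(\|x\| - \Re\langle x, y_\lambda\rangle) \ge 0$ shows $\Re\langle Ax, y_\lambda\rangle \le 0$ for every $\lambda$.

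The main technical step is extracting a limit: by Banach--Alaoglu, the net $\{y_\lambda\}_{\lambda>0}$ in the closed unit ball of $X'$ has a weak$^*$ cluster point $y$. Then $\langle x, y\rangle = \|x\|$ (from $\langle x, y_\lambda\rangle \to \|x\|$), so $\|y\| \ge 1$, while $\|y\| \le 1$ by weak$^*$ lower semicontinuity of the norm, giving $\|y\| = 1$; and $\Re\langle Ax, y\rangle \le 0$ since weak$^*$ convergence preserves $\langle Ax, \cdot\rangle$ and $\Re\langle Ax, y_\lambda\rangle \le 0$. Setting $j(x) := \|x\|\,y$ yields $\|j(x)\| = \|x\|$, $\langle x, j(x)\rangle = \|x\|^2$, and $\Re\langle Ax, j(x)\rangle \le 0$, completing the proof. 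The only subtlety to watch is that one must argue along a net (or use subsequences only in the separable case), and that the weak$^*$ limit needs to preserve both the pairing with $x$ and with $Ax$ — both of which are automatic since $x, Ax \in X$ are fixed elements used as weak$^*$ test functionals.
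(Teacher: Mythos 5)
Your proof is correct. The paper states this proposition in its appendix without proof (it is quoted from the cited semigroup references), and your argument — the elementary duality-set estimate for the easy direction, and Hahn--Banach norming functionals $y_\lambda$ for $(\lambda I-A)x$ together with a weak$^*$ cluster point via Banach--Alaoglu for the converse — is exactly the standard textbook proof (Engel--Nagel, Prop.\ II.3.23), with the net/subnet subtlety and the trivial case $x=0$ handled appropriately.
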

\begin{Thm}\label{semigroup_dissipative}
	Let $(A,D(A))$ be a densely defined linear operator on a Banach
	space $X$. If both $A$ and its adjoint $A^*$ are dissipative, then the closure $\overline{A}$ of $A$ generates a contraction semigroup on $X$.
\end{Thm}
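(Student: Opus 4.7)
The plan is to apply the Hille--Yosida theorem to the closure $\overline{A}$. The argument has three steps: verify that $\overline{A}$ exists and inherits dissipativity from $A$; show that $\operatorname{Ran}(\lambda I - \overline{A}) = X$ for some (hence every) $\lambda > 0$; then invoke Hille--Yosida as stated earlier in the appendix.

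First I would establish closability of $A$ and dissipativity of $\overline{A}$. The dissipativity estimate $\|(\lambda I - A)x\| \ge \lambda \|x\|$ valid for $\lambda > 0$ and $x \in D(A)$ is the key input: combined with the fact that $D(A^*)$ separates points (which follows from dissipativity of $A^*$, since otherwise the Hahn--Banach argument below would already fail), it forces the closure of the graph of $A$ to remain a graph, so $\overline{A}$ is well defined. The same inequality extends to $\overline{A}$ by density, so $\lambda I - \overline{A}$ is injective with closed range and $\|(\lambda I - \overline{A})^{-1}z\| \le \lambda^{-1}\|z\|$ on that range.

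The main obstacle, and the only place where the hypothesis on $A^*$ really bites, is the range condition. Since closedness of $\operatorname{Ran}(\lambda I - \overline{A})$ is already in hand from the previous step, it suffices to show that $\operatorname{Ran}(\lambda I - A)$ is dense in $X$. Suppose toward contradiction that it is not; by Hahn--Banach there is $\ell \in X^*$ with $\ell \ne 0$ and $\langle (\lambda I - A)x, \ell \rangle = 0$ for every $x \in D(A)$. By the definition of the adjoint this forces $\ell \in D(A^*)$ and $(\lambda I - A^*)\ell = 0$. But dissipativity of $A^*$ gives $\|(\lambda I - A^*)\ell\| \ge \lambda \|\ell\| > 0$, a contradiction. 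Hence the range is dense, and combined with closedness of the range of $\lambda I - \overline{A}$ it equals all of $X$.

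Finally, $\overline{A}$ is closed, densely defined (since $D(A) \subset D(\overline{A})$ is already dense), and we have exhibited one $\lambda_0 > 0$ in $\rho(\overline{A})$ with the sharp resolvent bound. A standard Neumann-series and connectedness argument propagates both membership in the resolvent set and the bound $\|(\lambda I - \overline{A})^{-1}\| \le \lambda^{-1}$ to every $\lambda > 0$. The Hille--Yosida theorem then yields that $\overline{A}$ generates a strongly continuous contraction semigroup on $X$. The only delicate step is the density of the range via the adjoint; everything else is routine bookkeeping around the dissipativity inequality.
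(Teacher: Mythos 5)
The paper itself gives no proof of this statement: it appears in the appendix as quoted background, with the reader referred to \cite{Engel1999,Gohberg1990}, so there is no in-paper argument to compare yours against. What you wrote is the standard proof of this corollary of the Lumer--Phillips theorem, and its core is correct: the decisive step is exactly your Hahn--Banach argument, where a functional $\ell\neq 0$ annihilating $\mathrm{Ran}(\lambda I-A)$ satisfies $\langle Ax,\ell\rangle=\lambda\langle x,\ell\rangle$ for all $x\in D(A)$, hence $\ell\in D(A^*)$ and $(\lambda I-A^*)\ell=0$, contradicting $\|(\lambda I-A^*)\ell\|\ge\lambda\|\ell\|>0$; combined with the closed range of $\lambda I-\overline{A}$ obtained from the dissipativity estimate, this gives surjectivity, and Hille--Yosida (with the extension of $\lambda\in\rho$ and the bound to all $\lambda>0$) finishes. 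This is the same route taken in the cited references; note only that for a general Banach space $A^*$ is the dual-space adjoint on $X^*$, whereas the paper's Definition 7.1 is Hilbert-space specific (which suffices for its application with $X=L^2$ in Theorem 3.1).

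The one step you should repair is closability. Your justification --- that $D(A^*)$ separates points of $X$ ``since otherwise the Hahn--Banach argument below would already fail'' --- is circular, and dissipativity of $A^*$ by itself says nothing about how large $D(A^*)$ is. The standard fix needs only dissipativity of $A$ and density of $D(A)$: if $x_n\in D(A)$, $x_n\to 0$ and $Ax_n\to y$, apply $\|(\lambda I-A)u\|\ge\lambda\|u\|$ to $u=x_n+\lambda^{-1}w$ with $w\in D(A)$, let $n\to\infty$ and then $\lambda\to\infty$ to get $\|w-y\|\ge\|w\|$ for all $w\in D(A)$; density of $D(A)$ then forces $y=0$, so $A$ is closable, and the same limiting argument shows $\overline{A}$ is dissipative (which you already use). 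A second, minor point: the appendix states Hille--Yosida with the resolvent bound on the whole half plane $\{\mathrm{Re}\,\lambda>0\}$, while your argument produces it only on $(0,\infty)$; this is harmless, since by the Proposition preceding the theorem dissipativity of $\overline{A}$ yields $\|(\lambda I-\overline{A})x\|\ge(\mathrm{Re}\,\lambda)\|x\|$ for $\mathrm{Re}\,\lambda>0$, and your Neumann-series/connectedness argument then propagates invertibility over the half plane exactly as over the ray. With these two adjustments the proof is complete and coincides with the proof in the literature the paper points to.
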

\begin{Thm}\label{semigroup_boundedpertur}
	Let $(A,D(A))$ be the generator of a strongly continuous semigroup
	$(T(t))_{t\ge 0}$ on a Banach space $X$
	satisfying $\|T(t)\|\le Me^{\omega t}$ for all $t \ge 0$
	and some $\omega\in\R$, $M\ge 1$. If $B\in L(X)$, then
	$C := A + B$ with $D(C) := D(A)$
	generates a strongly continuous semigroup $(S(t))_{t\ge 0}$ satisfying
	$\|S(t)\|\le Me^{(\omega+M\|B\|)t}$ and 
	\begin{align*}
	S(t)x = T(t)x + \int^t_0T(t-s)BS(s)x\,ds,
	\end{align*} for all $t \ge 0$, $x\in X$.
\end{Thm}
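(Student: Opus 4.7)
The plan is to construct $S(t)$ directly as a Dyson series iterating the claimed integral equation, then verify the three assertions (existence as a strongly continuous semigroup, the norm bound, and that its generator is $A+B$ with domain exactly $D(A)$). Define $U_0(t):=T(t)$ and inductively $U_{n+1}(t)x:=\int_0^t T(t-s)B\,U_n(s)x\,ds$; strong continuity of $T(\cdot)$ and boundedness of $B$ make the integrand strongly continuous in $s$, so each Bochner integral is well defined. Set $S(t):=\sum_{n=0}^\infty U_n(t)$.

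An induction establishes $\|U_n(t)\|\le Me^{\omega t}(M\|B\|t)^n/n!$: the base case is the hypothesis on $T$, and the induction step combines this bound with the recursive definition of $U_{n+1}$ together with the elementary integral $\int_0^t s^n\,ds=t^{n+1}/(n+1)$. Summing in $n$ yields $\|S(t)\|\le Me^{(\omega+M\|B\|)t}$ and shows the series converges in operator norm uniformly on compact time intervals. Since each $U_n(\cdot)$ is strongly continuous, uniform convergence gives strong continuity of $S(\cdot)$ with $S(0)=I$; interchanging the sum with the outer integral (justified by uniform operator-norm convergence) produces the claimed identity $S(t)x=T(t)x+\int_0^t T(t-s)BS(s)x\,ds$.

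For the semigroup property $S(t+r)=S(t)S(r)$, I would fix $r\ge 0$ and argue that both $t\mapsto S(t+r)x$ and $t\mapsto S(t)S(r)x$ satisfy the same linear Volterra integral equation with common value $S(r)x$ at $t=0$. Their difference $D(t)$ obeys $D(t)=\int_0^t T(t-s)BD(s)\,ds$, so $e^{-\omega t}\|D(t)\|\le M\|B\|\int_0^t e^{-\omega s}\|D(s)\|\,ds$, and Gronwall forces $D\equiv 0$. This is the step I expect to require the most care, since the $Me^{\omega t}$ prefactor from $T$ must be absorbed cleanly into the Gronwall estimate so the sharp growth bound is preserved.

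To identify the generator, take $x\in D(A)$ and divide the integral equation by $t$. The first term $(T(t)x-x)/t$ tends to $Ax$ by definition, while $\tfrac{1}{t}\int_0^t T(t-s)BS(s)x\,ds$ tends to $Bx$ by joint strong continuity of $(t,s)\mapsto T(t-s)BS(s)x$ near the origin together with $S(0)=I$; hence $x\in D(C)$ and $Cx=(A+B)x$. Conversely, if $x$ lies in the domain of the generator of $S$, the same rearrangement shows that $(T(t)x-x)/t$ converges as $t\to 0^+$ because the integral term converges using only boundedness of $B$ and strong continuity of $S$; thus $x\in D(A)$, giving $D(C)=D(A)$.
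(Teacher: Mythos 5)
The paper does not prove this theorem at all: it is stated in the appendix as a standard result of semigroup theory and referred to the cited references (Engel--Nagel, Gohberg et al.), so there is no in-paper proof to compare against. Your proposal is the classical Phillips/Dyson series argument from exactly those references, and it is correct: the inductive bound $\|U_n(t)\|\le Me^{\omega t}(M\|B\|t)^n/n!$ gives both locally uniform convergence and the stated growth estimate $Me^{(\omega+M\|B\|)t}$; the semigroup law follows from the Gronwall argument once one checks (by splitting the integral at $s=r$ and changing variables) that $t\mapsto S(t+r)x$ satisfies the same Volterra equation as $t\mapsto S(t)S(r)x$ with datum $S(r)x$ --- a routine step you gesture at but should write out; and the two-sided generator identification via the integral equation correctly yields $D(C)=D(A)$ and $C=A+B$. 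The only other points to make explicit in a full write-up are the strong continuity of each $U_n$ in $t$ (standard for convolutions of strongly continuous families) and the justification for interchanging the sum with the Bochner integral, which your uniform-convergence remark already covers.
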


  \subsection{Hilbert-Schmidt Operator}
  Let $H_1$ be a seperable Hilbert space, $H_2$ be a Hilbert space.
  \begin{Def}
    $T\in L(H_1,H_2)$ is called a Hilbert-Schmidt operator if there exists an orthonomral basis $\{e_n\}^\infty_{n=1}$ of $H_1$ such that $
      \sum^\infty_{n=1}\|Te_n\|^2_{H_2}<\infty$.
  \end{Def}
  \begin{Thm}\label{A_Hilbert_Schmidt}
    If $T\in L(H_1,H_2)$ is a Hilbert-Schmidt operator, then $T$ is compact.
  \end{Thm}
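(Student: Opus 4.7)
The plan is to realize $T$ as the operator-norm limit of a sequence of finite-rank operators, since finite-rank operators are compact and compactness is preserved under operator-norm limits in $L(H_1,H_2)$.

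First, fix the orthonormal basis $\{e_n\}_{n=1}^\infty$ of $H_1$ supplied by the Hilbert--Schmidt hypothesis, so that $\sum_{n=1}^\infty \|Te_n\|_{H_2}^2 < \infty$. For each $N\in\N$, define the truncated operator
\begin{align*}
T_N x := \sum_{n=1}^N \<x, e_n\>_{H_1}\, Te_n, \qquad x\in H_1.
\end{align*}
The range of $T_N$ is contained in $\mathrm{span}\{Te_1,\dots,Te_N\}$, which is finite-dimensional; since bounded sets in a finite-dimensional normed space are precompact (Heine--Borel), $T_N$ is compact.

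Next, I would estimate $T-T_N$ in operator norm. Using the expansion $x=\sum_n \<x,e_n\>e_n$ (which converges in $H_1$ because $\{e_n\}$ is an orthonormal basis) together with the continuity of $T$, one obtains $Tx = \sum_n \<x,e_n\> Te_n$ in $H_2$. Therefore, by the Cauchy--Schwarz inequality in $\ell^2$ and Bessel's inequality,
\begin{align*}
\|Tx - T_N x\|_{H_2}^2
= \Big\|\sum_{n>N}\<x,e_n\> Te_n\Big\|_{H_2}^2
\le \Big(\sum_{n>N}|\<x,e_n\>|^2\Big)\Big(\sum_{n>N}\|Te_n\|_{H_2}^2\Big)
\le \|x\|_{H_1}^2 \sum_{n>N}\|Te_n\|_{H_2}^2.
\end{align*}
Taking the supremum over unit vectors gives $\|T-T_N\|_{L(H_1,H_2)}^2 \le \sum_{n>N}\|Te_n\|_{H_2}^2$, and this tail tends to $0$ as $N\to\infty$ by the Hilbert--Schmidt hypothesis.

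Finally, I would invoke the standard fact that the set of compact operators is norm-closed in $L(H_1,H_2)$: an operator-norm limit of compact operators is compact (this follows from a diagonal argument applied to the images of a bounded sequence). Since $T_N \to T$ in norm and each $T_N$ is compact, $T$ itself is compact. There is essentially no obstacle here beyond verifying the pointwise series identity $Tx=\sum_n\<x,e_n\>Te_n$, which is immediate from the boundedness of $T$ and the basis expansion of $x$.
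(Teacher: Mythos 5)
Your proposal is correct and follows essentially the same route as the paper's proof: truncate to the finite-rank operators $T_N x=\sum_{n\le N}\langle x,e_n\rangle Te_n$, bound $\|T-T_N\|^2$ by the tail $\sum_{n>N}\|Te_n\|_{H_2}^2$ via Cauchy--Schwarz and Bessel, and conclude by the norm-closedness of the compact operators. No issues to report.
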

  \begin{proof}
    Let $\{e_n\}^\infty_{n=0}$ be an orthonormal basis of $H_1$ such that
    $\sum^\infty_{n=1}\|Te_n\|^2_{H_2}<\infty$.
    Then for $x\in H_1$,
    \begin{align*}
      Tx = \sum^\infty_{n=0}(x,e_n)Te_n,
    \end{align*}since the right hand side is absolutely convergent.
    Define $T_n:H_1\to H_2$ by $
      T_n(x) := \sum^n_{k=1}(x,e_k)Te_k$.
    Then $T_n$ has finite rank and hence is compact.

  On the other hand, for $x\in H_1$ such that $\|x\|_{H_1}\le 1$, we have
  \begin{align*}
    \|Tx-T_nx\|^2_{H_2}&\le \|\sum^\infty_{k=n+1}(x,e_k)Te_k\|^2_{H_2}
    \le \|x\|_{H_1}^2\cdot \sum^\infty_{k=n+1}\|Te_k\|^2_{H_2},\\
    &\|T-T_n\|\le \left(\sum^\infty_{k=n+1}\|Te_k\|^2_{H_2}\right)^\frac{1}{2}\to 0,
  \end{align*}as $n\to\infty$.
  Thus $T$ is compact since it's the limit of a sequence of compact operators.
  \qe\end{proof}

  \begin{Thm}
    Suppose $L^2(X,\mu)$ and $L^2(Y,\lambda)$ are two seperable Hilbert space.
    If $k\in L^2(X\times Y,\mu\otimes\lambda)$, then
    \begin{align}
      Kf := \int_Y k(x,y)f(y)\,d\lambda(y) : L^2(Y,\lambda)\to L^2(X,\mu)
    \end{align}
    is a Hilbert-Schmidt operator.
  \end{Thm}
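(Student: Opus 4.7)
The plan is to verify the paper's definition of Hilbert--Schmidt directly: fix any orthonormal basis $\{e_n\}_{n=1}^\infty$ of $L^2(Y,\lambda)$ (which exists by separability) and establish the identity
\begin{align*}
\sum_{n=1}^\infty \|Ke_n\|_{L^2(X,\mu)}^2 = \|k\|_{L^2(X\times Y,\mu\otimes\lambda)}^2.
\end{align*}
Once this identity is in hand, the hypothesis $k\in L^2(X\times Y)$ yields the Hilbert--Schmidt property immediately, and the preceding theorem of the appendix then upgrades $K$ to a compact operator.

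The core step is a pointwise Parseval identity on each slice. By Tonelli applied to $|k|^2$, the section $k(x,\cdot)$ lies in $L^2(Y,\lambda)$ for $\mu$-a.e.\ $x\in X$, so for such $x$ Parseval gives
\begin{align*}
\sum_{n=1}^\infty |Ke_n(x)|^2 = \sum_{n=1}^\infty \Bigl|\int_Y k(x,y)\,e_n(y)\,d\lambda(y)\Bigr|^2 = \int_Y |k(x,y)|^2\,d\lambda(y).
\end{align*}
In the complex case, working with the convention $\langle f,g\rangle=\int f\bar g\,d\lambda$, I would point out that $\{\overline{e_n}\}$ is also an orthonormal basis (complex conjugation is an antiunitary bijection of $L^2(Y,\lambda)$), so expanding $k(x,\cdot)$ against $\{\overline{e_n}\}$ produces exactly the displayed sum without any conjugation ambiguity.

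Finally, I integrate this identity in $x$ and swap sum and integral by Tonelli, which is legitimate since the summands are non-negative, to obtain the target identity. Along the way it is convenient to record boundedness of $K:L^2(Y,\lambda)\to L^2(X,\mu)$, which falls out of Cauchy--Schwarz on each slice as $|Kf(x)|\le \|k(x,\cdot)\|_{L^2(Y)}\|f\|_{L^2(Y)}$, so $\|K\|_{L(L^2(Y),L^2(X))}\le \|k\|_{L^2(X\times Y)}$. There is essentially no obstacle in this argument; the only substantive point is the a.e.\ applicability of the slicewise Parseval step, which is supplied by Tonelli applied to $|k|^2$, and everything else is measure-theoretic bookkeeping.
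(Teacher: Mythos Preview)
Your proposal is correct and follows essentially the same argument as the paper: both use Tonelli to get $k(x,\cdot)\in L^2(Y,\lambda)$ for $\mu$-a.e.\ $x$, apply Parseval slicewise against the conjugate basis $\{\overline{e_n}\}$ to write $\sum_n|Ke_n(x)|^2=\|k(x,\cdot)\|_{L^2(Y)}^2$, and then integrate over $x$ with Tonelli to obtain $\sum_n\|Ke_n\|_{L^2(X)}^2=\|k\|_{L^2(X\times Y)}^2$. Your version is slightly more explicit about the measure-theoretic justifications and adds the operator-norm bound $\|K\|\le\|k\|_{L^2(X\times Y)}$, but the substance is identical.
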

  \begin{proof}
    Let $\{e_n\}^\infty_{n=0}$ be an orthonormal basis of $L^2(Y,\lambda)$, then so is $\{\overline{e_n}\}^\infty_{n=0}$.
  Since $k\in L^2(X\times Y,\mu\otimes\lambda)$, we have $k(x,\cdot)\in L^2(Y,\lambda)$, for almost all $x\in X$. Thus $Ke_n$ is well-defined for almost all $x\in X$ and
  \begin{align*}
    Ke_n(x) = \int_Y k(x,y)e_n(y)\,d\lambda(y)&\\
    \sum^\infty_{n=0} \|Ke_n\|^2_{L^2(X,\mu)}
    =\int_X\sum^\infty_{n=0}\left|(k(x,\cdot),\overline{e_n})_{L^2(Y)}\right|^2\,d\mu(x)
    &=\|k\|^2_{L^2(X\times Y)}<\infty.
  \end{align*}
  \qe\end{proof}

  \subsection{Interpolation}
  Define $\|f\|_{L^p_\beta} = \|(1+|\xi|)^\beta f\|_{L^p(\Rd)}$, $p\in[1,\infty]$.
  \begin{Thm}\label{A_interpolation}
    Let $\beta\in\R$, $\gamma\in\R\setminus\{0\}$, $p\in[1,\infty]$. Suppose $T$ is a linear operator defined on $L^p_{\beta+n\cdot\gamma}$ such that
    \begin{align}
      \|Tf\|_{L^p_\beta} \le A_n\|f\|_{L^p_{\beta+n\cdot\gamma}},
    \end{align}for some constants $A_n>0$, for all $n\in\{0,1,2,\dots\}$.
    Let $\theta\in(0,1)$, $n,m\in\{0,1,2,\dots\}$, pick $\alpha=n\cdot\theta+m\cdot(1-\theta)$.
    Then $T$ is bounded linear operator from $L^p_{\beta+\alpha\cdot\gamma}$ to $L^p_\beta$, with
    \begin{align}
      \|Tf\|_{L^p_\beta} \le A^\theta_nA^{1-\theta}_m\|f\|_{L^p_{\beta+\alpha\cdot\gamma}}.
    \end{align}
  \end{Thm}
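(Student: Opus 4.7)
The plan is to establish the result by Stein--Thorin complex interpolation, via Hadamard's three--lines lemma applied to an analytic family of weighted inputs to the fixed operator $T$. Since the target space $L^p_\beta$ and the Lebesgue exponent $p$ are unchanged between the two endpoint bounds, only the source weight moves, and the weighted $L^p$ scale interpolates along the complex strip exactly so as to produce $\alpha=n\theta+m(1-\theta)$.

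First, given $f\in L^p_{\beta+\alpha\gamma}$ I would normalize by setting $\phi(\xi):=(1+|\xi|)^{\beta+\alpha\gamma}f(\xi)\in L^p(\Rd)$, with $\|\phi\|_{L^p}=\|f\|_{L^p_{\beta+\alpha\gamma}}$. For $z$ in the closed strip $S=\{z\in\C:0\le\Re z\le 1\}$ I introduce
\begin{align*}
f_z(\xi):=(1+|\xi|)^{-\beta-((1-z)m+zn)\gamma}\phi(\xi).
\end{align*}
Because $|(1+|\xi|)^{iy}|=1$ for real $y$, one has $|f_z(\xi)|=(1+|\xi|)^{-\beta-((1-\Re z)m+\Re z\,n)\gamma}|\phi(\xi)|$, so for all $t\in\R$,
\begin{align*}
\|f_{it}\|_{L^p_{\beta+m\gamma}}=\|f_{1+it}\|_{L^p_{\beta+n\gamma}}=\|\phi\|_{L^p},
\end{align*}
while at $z=\theta$ the exponent is $(1-\theta)m+\theta n=n\theta+m(1-\theta)=\alpha$, giving $f_\theta=f$. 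Applying the endpoint hypotheses at the integers $m$ and $n$ therefore yields
\begin{align*}
\|Tf_{it}\|_{L^p_\beta}\le A_m\|\phi\|_{L^p},\qquad \|Tf_{1+it}\|_{L^p_\beta}\le A_n\|\phi\|_{L^p}.
\end{align*}

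For $p\in[1,\infty)$ I would test against an arbitrary $\tau\in L^{p'}_{-\beta}$ with $\|\tau\|_{L^{p'}_{-\beta}}\le 1$ and form the scalar function $F(z):=\int_{\Rd} Tf_z(\xi)\,\overline{\tau(\xi)}\,d\xi$. After reducing to $\phi$ bounded and compactly supported (dense in $L^p$), the holomorphy of $F$ on the open strip follows from $z\mapsto f_z(\xi)$ being entire together with linearity of $T$ (the derivative $\partial_zf_z=-(n-m)\gamma\log(1+|\xi|)f_z$ is again bounded with compact support, so differentiation under the integral is legitimate), and boundedness of $F$ on the closed strip follows from a uniform estimate $\|f_z\|_{L^p_{\beta}}\le C_R\|\phi\|_{L^p}$ on $\Re z\in[0,1]$ when $\mathrm{supp}\,\phi\subset\{|\xi|\le R\}$, combined with $A_0<\infty$. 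Hadamard's three--lines lemma then gives $|F(\theta)|\le A_m^{1-\theta}A_n^\theta\|\phi\|_{L^p}$; taking the supremum over $\tau$ and extending by density yields the claimed bound for all $p<\infty$.

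The main obstacle is the case $p=\infty$: bounded compactly supported functions are not dense in $L^\infty_{\beta+\alpha\gamma}$, and $L^\infty_\beta$ is not realized as the norm dual of $L^1_{-\beta}$ in the form used above, so the duality--plus--three--lines argument must be modified. I would handle this by applying Hadamard's three--lines lemma pointwise to the scalar holomorphic function $z\mapsto (1+|\xi_0|)^\beta Tf_z(\xi_0)$ at each fixed $\xi_0\in\Rd$ where $T$ admits a pointwise evaluation (the kernel--operator situation in which this interpolation is actually applied in the paper, namely to $K$), and then taking the essential supremum in $\xi_0$; alternatively, once the $p=1$ estimate is established one can pass to $p=\infty$ by weak--$*$ duality with $L^1_{-\beta}$. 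All the remaining bookkeeping --- matching the endpoint weights, verifying the interpolation identity $\alpha=n\theta+m(1-\theta)$, and the density extension --- is routine.
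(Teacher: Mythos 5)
Your argument is essentially the paper's own proof: the same Stein--Thorin scheme via Hadamard's three-lines theorem in which the analytic family moves only the source weight $(1+|\xi|)^{((1-z)m+zn)\gamma}$, the endpoint constants $A_m,A_n$ are used on the two boundary lines, and duality against $L^{p'}$ together with a density argument recovers the bound at $z=\theta$ (the paper simply runs this on simple functions $f$ and $g$ with the weight factor built into $f_z$, rather than your normalized $\phi$ --- a cosmetic difference). Your separate care at $p=\infty$ concerns a point the paper itself glosses over (its density step also fails there), and your pointwise kernel version of the three-lines argument is a reasonable fix; just note that the theorem is only invoked in the paper at $p=2$ (for the semigroup estimate of Section 5, not for $K$), and that your fallback of ``passing from $p=1$ by weak-$*$ duality'' would need an adjoint hypothesis on $T$ that is not part of the statement.
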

  To prove this thoerem, we will need the Hadamard's three lines thoerem.
  \begin{Thm}\label{A_Three}
    Let $F$ be an analytic function in the open strip $S=\{z\in\C:0<\Re\lambda <1\}$. Suppose $F$ is continuous and bounded on $\overline{S}$ with
    \begin{align*}
      |F(z)|\le \left\{
      \begin{aligned}
        A_0, \text{  if }\Re\lambda=0,\\
        A_1, \text{  if }\Re\lambda=1.
      \end{aligned}\right.
    \end{align*}for some positive constants $A_0,A_1$. Then for any $\theta\in[0,1]$, if $\Re z=\theta$, we have
    \begin{align*}
      |F(z)|\le A_0^{1-\theta}A_1^\theta.
    \end{align*}
  \end{Thm}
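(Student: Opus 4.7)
The plan is to apply the standard Phragmén--Lindelöf trick: renormalize $F$ so that the boundary bounds become $1$, multiply by a Gaussian-type damping factor to enforce decay as $|\Im z|\to\infty$, and then use the maximum modulus principle on a bounded rectangle.

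First, I would introduce the auxiliary function
\begin{align*}
G(z) := F(z)\,A_0^{z-1}\,A_1^{-z},
\end{align*}
where $A_0^{z-1} := \exp\bigl((z-1)\log A_0\bigr)$ and similarly for $A_1^{-z}$ (both $A_0,A_1>0$, so the logarithms are well defined and $G$ is entire in $z$ on $S$). Since $|A_0^{z-1}| = A_0^{\Re z - 1}$ and $|A_1^{-z}| = A_1^{-\Re z}$, one computes $|G(z)| = |F(z)| A_0^{\Re z - 1} A_1^{-\Re z}$, which on $\Re z = 0$ gives $|G(z)|\le A_0\cdot A_0^{-1}\cdot 1 = 1$, and on $\Re z = 1$ gives $|G(z)|\le A_1\cdot 1\cdot A_1^{-1} = 1$. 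Moreover $G$ is bounded on $\overline S$ because $F$ is bounded and $A_0^{\Re z - 1}A_1^{-\Re z}$ is bounded for $\Re z\in[0,1]$.

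The main obstacle is that the strip $S$ is unbounded, so the maximum modulus principle cannot be invoked directly. To get around this, for any $\varepsilon>0$ I introduce
\begin{align*}
G_\varepsilon(z) := G(z)\exp\bigl(\varepsilon(z^2 - 1)\bigr).
\end{align*}
Writing $z = x + iy$ with $x\in[0,1]$, one has $|\exp(\varepsilon(z^2-1))| = \exp(\varepsilon(x^2 - y^2 - 1)) \le \exp(-\varepsilon y^2)$. Hence $G_\varepsilon$ is analytic in $S$, continuous and bounded on $\overline S$, on the boundary it satisfies $|G_\varepsilon(z)|\le 1$, and $|G_\varepsilon(x+iy)|\to 0$ as $|y|\to\infty$ uniformly in $x\in[0,1]$. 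Therefore, for $R$ large enough (depending on $\varepsilon$ and $\sup_{\overline S}|G|$), $|G_\varepsilon(z)|\le 1$ on the horizontal sides $\{\Im z = \pm R,\ \Re z\in[0,1]\}$ as well. Applying the maximum modulus principle on the bounded rectangle $[0,1]\times[-R,R]$ yields $|G_\varepsilon(z)|\le 1$ on this rectangle, and sending $R\to\infty$ gives $|G_\varepsilon(z)|\le 1$ throughout $\overline S$.

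Finally, I would fix $z\in\overline S$ and let $\varepsilon\to 0^+$: since $\exp(\varepsilon(z^2-1))\to 1$, this yields $|G(z)|\le 1$ on $\overline S$. Unwinding the definition of $G$, for $z$ with $\Re z = \theta\in[0,1]$,
\begin{align*}
|F(z)| \le A_0^{1-\theta}A_1^{\theta},
\end{align*}
which is the desired inequality. The only delicate point is handling the $|y|\to\infty$ behavior rigorously (justifying that $|G_\varepsilon|\le 1$ on the top/bottom of the rectangle for sufficiently large $R$), but this follows immediately from the uniform bound $|G_\varepsilon(x+iy)|\le \|G\|_{L^\infty(\overline S)}\,e^{-\varepsilon y^2}$.
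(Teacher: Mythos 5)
Your proof is correct: the normalization $G(z)=F(z)A_0^{z-1}A_1^{-z}$ reduces the boundary bounds to $1$, the factor $e^{\varepsilon(z^2-1)}$ supplies the decay needed to apply the maximum modulus principle on the truncated rectangle, and the limits $R\to\infty$, $\varepsilon\to 0^+$ are handled properly. Note that the paper itself states this result (the classical Hadamard three-lines theorem) without proof, using it only as an ingredient in the proof of Theorem \ref{A_interpolation}; your argument is the standard Phragm\'en--Lindel\"of one and fills that gap correctly, the only cosmetic slip being the phrase ``entire in $z$ on $S$'' where you mean analytic on $S$.
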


  \begin{proof}[Proof of Theorem \ref{A_interpolation}]
    1.
    With loss of generality, assume $m>n$. Fix $\theta\in(0,1)$ and Let
    \begin{align*}
      \alpha = n\cdot\theta+m\cdot(1-\theta).
    \end{align*}
    Notice $T$ is well-defined on $L^p_{\beta+\alpha\gamma}$, since $L^p_{\beta+\alpha\gamma}\subset L^p_{\beta+m\gamma}$.

    2. Let $f(\xi)=\sum^K_{k=1}a_ke^{i\alpha_k}\chi_{A_k}(\xi)$ be any simple complex function on $\Rd$, with $a_k>0$, $\alpha_k\in\R$, $\{A_k\}$ are pairwise disjoint bounded measurable subsets of $\Rd$. We would like to control
    \begin{align*}
      \|Tf\|_{L^p_\beta} = \sup_{\|g\|_{L^{p'}}\le 1, g\text{ is simple}}
      \left|\int_\Rd Tf(\xi)g(\xi)(1+|\xi|)^\beta\,d\xi\right|.
    \end{align*}
    Write $g=\sum^J_{j=1}b_je^{i\beta_j}\chi_{B_j}(\xi)$, with $b_j>0$, $\beta_j\in\R$, $\{B_j\}$ are pairwise disjoint bounded measurable subsets of $\Rd$.
    Suppose $z\in \{z\in\C:0\le\Re z\le 1\}$. Define
    \begin{align*}
      f_z(\xi):=\sum^K_{k=1}a_ke^{i\alpha_k}\chi_{A_k}(\xi)\cdot(1+|\xi|)^{(\alpha-nz-m(1-z))\gamma}.
    \end{align*}
    Then $f_\theta(\xi) = f(\xi)$, $f_0(\xi) = f(\xi)(1+|\xi|)^{(\alpha-m)\gamma}$, $f_1(\xi) = f(\xi)(1+|\xi|)^{(\alpha-n)\gamma}$. Define
    \begin{align*}
      F(z) :&= \int_\Rd T(f_z)g(\xi)(1+|\xi|)^\beta\,d\xi\\
      &=\sum^K_{k=1}\sum^J_{j=1}a_kb_je^{i\alpha_k}e^{i\beta_j}
      \int_\Rd T(\chi_{A_k}(\xi)(1+|\xi|)^{(\alpha-nz-m(1-z))\gamma})\chi_{B_j}(\xi)(1+|\xi|)^\beta\,d\xi.
    \end{align*}
  Now we need to check $F$ satisfies the assumptions in three-lines theorem \ref{A_Three}.

  (i). Claim: $F(z)$ is continuous and bounded in $\{0\le\Re z\le 1\}$.

  Indeed,
  \begin{align*}
    |F(z)|&\le \sum^K_{k=1}\sum^J_{j=1}a_kb_j\|T(\chi_{A_k}(1+|\cdot|)^{(\alpha-nz-m(1-z))\gamma})\|_{L^p_\beta}\|\chi_{B_j}(\xi)\|_{L^{p'}}\\
    &\le C\sum^K_{k=1}\|\chi_{A_k}\|_{L^p_{\beta+(\alpha-nz+mz)\gamma}}<\infty.
  \end{align*}
   Notice $A_k$ is bounded, there exists a open ball $B(0,R)$ with radius $R>0$ such that $A_k\subset B(0,R)$, for all $1\le k\le K$. Then for $z_1,z_2\in\{0\le\Re z\le 1\}$, by H\"older's inequality and the boundedness of $T$, we have
  \begin{align*}
    &|F(z_1)-F(z_2)|\\ &\le \sum^K_{k=1}\sum^J_{j=1}a_kb_j\left\|\chi_{A_k}(1+|\xi|)^{\beta+\alpha\gamma}\left|(1+|\xi|)^{(m-n)z_1\gamma}-(1+|\xi|)^{(m-n)z_2\gamma}\right|\right\|_{L^p}\|\chi_{B_j}(\xi)\|_{L^{p'}}\\
    &\le C_{\gamma,m,n,g}\sum^K_{k=1}\left\|\chi_{A_k}(1+|\xi|)^{\beta+\alpha\gamma}
    \min\{1,(1+|\xi|)^{(m-n)\gamma}\}\ln(1+|\xi|)\right\|_{L^p}|z_1-z_2|\\
    &\le C_{\gamma,m,n,R}|z_1-z_2|\to 0,
  \end{align*}
  as $|z_1-z_2|\to 0$.
  This proves the claim.

  (ii). Claim: $F(z)$ is analytic in $\{0<\Re z< 1\}$.

  Indeed, similarly, for $z,z_0\in\{0<\Re z< 1\}$, by H\"older's inequality and the boundedness of $T$ and noticing that $\{A_k\}^K_{k=1}$ is uniformly bounded, we have
  \begin{align*}
    &\bigg|\frac{F(z)-F(z_0)}{z-z_0} - \sum^K_{k=1}\sum^J_{j=1}a_kb_je^{i\alpha_k}e^{i\beta_j}\\
    &\qquad\qquad\qquad\qquad\qquad
    \int_\Rd T\Big(\chi_{A_k}\partial_z\Big[(1+|\cdot|)^{(\alpha-nz-m(1-z))\gamma}\Big]\Big|_{z=z_0}\Big)\chi_{B_j}(\xi)(1+|\xi|)^\beta\,d\xi\Big|\\
    &\le C_{\alpha,\beta,\gamma,K,J} \sum^K_{k=1}\Big\|\frac{(1+|\xi|)^{(m-n)z\gamma}-(1+|\xi|)^{(m-n)z_0\gamma}}{z-z_0}\\
    &\qquad\qquad\qquad\qquad\qquad\qquad\qquad\qquad-(m-n)\gamma(1+|\xi|)^{(m-n)z_0\gamma}\ln(1+|\xi|)\Big\|_{L^p(A_k)}\\
    &\le C_{\alpha,\beta,\gamma,K,J} \sum^K_{k=1}(m-n)\Big\|\Big((1+|\xi|)^{(m-n)(z_0+t(z-z_0))\gamma}-(1+|\xi|)^{(m-n)z_0\gamma}\Big)\ln(1+|\xi|)\Big\|_{L^p(A_k)},\\
    &\le C_{\alpha,\beta,\gamma,K,J} \sum^K_{k=1}(m-n)^2\Big\|(1+|\xi|)^{(m-n)(z_0+st(z-z_0))\gamma}(\ln(1+|\xi|))^2\Big\|_{L^p(A_k)}t|z-z_0|,
  \end{align*}
  for some $s,t\in(0,1)$.
  Notice $z\to z_0$ implies $t,s\to 0$ and $A_k$ are uniformly bounded, thus the limit $\lim_{z\to z_0}\frac{F(z)-F(z_0)}{z-z_0}$ exists and hence $F(z)$ is analytic.

  (iii). If $\Re z = 0$, by using H{\"o}lder's inequality and the boundedness of $T$, we have
  \begin{align*}
    |F(z)|\le \|Tf_z\|_{L^p_\beta}\|g\|_{L^{p'}}
    \le A_n\|f_z\|_{L^p_{\beta+n\gamma}}\|g\|_{L^{p'}}
    = A_n\|f\|_{L^p_{\beta+\alpha\gamma}}\|g\|_{L^{p'}}.
  \end{align*}
  If $\Re z =1$, similarly we have
  \begin{align*}
    |F(z)|\le \|Tf_z\|_{L^p_\beta}\|g\|_{L^{p'}}
    \le A_m\|f_z\|_{L^p_{\beta+m\gamma}}\|g\|_{L^{p'}}
    = A_m\|f\|_{L^p_{\beta+\alpha\gamma}}\|g\|_{L^{p'}}.
  \end{align*}

  Therefore we can apply the Hadamard's three-lines theorem. When $\Re z=\theta$,
  \begin{align*}
    |F(z)|\le A_n^\theta A_m^{1-\theta}\|f\|_{L^p_{\beta+\alpha\gamma}}\|g\|_{L^{p'}}.
  \end{align*}
  Thus for any simple function $f$ in $L^p_{\beta+\alpha\gamma}$,
  \begin{align*}
    \|Tf\|_{L^p_\beta} = \sup_{\|g\|_{L^{p'}}\le 1, g\text{ is simple}}|F(\theta)| \le A_n^\theta A_m^{1-\theta}\|f\|_{L^p_{\beta+\alpha\gamma}}.
  \end{align*}
  Since simple functions is dense in $L^p_{\beta+\alpha\gamma}$, we proved the theorem.
  \qe\end{proof}

  \subsection{Preliminary Lemmas and Properties of L}

  \begin{Lem}\label{I_lemma1}
    For $A_1,A_2>0$, $d\ge 3$. Denote $ b=\left(\frac{\xi+\xi_*}{2}\cdot\frac{\xi_*-\xi}{|\xi_*-\xi|}\right)\frac{\xi_*-\xi}{|\xi_*-\xi|}$.
    Then

    (1). For $\alpha\in(-\infty,d)$,
    \begin{align}\label{I_lemma11}
      \int_\Rd \frac{1}{|\xi_*-\xi|^\alpha}e^{-A_1|\xi_*-\xi|^2-A_2|b|^2}\,d\xi_*
      \le C_{\alpha,A_1,A_2,d}\frac{1}{1+|\xi|}.
    \end{align}

    (2). For $\alpha\in[0,d)$,
    \begin{align}\label{I_lemma12}
      \int_\Rd \frac{1}{|\xi_*-\xi|^\alpha}e^{-A_1|\xi_*|^2}\,d\xi_*
      =\int_\Rd \frac{1}{|\xi_*|^\alpha}e^{-A_1|\xi_*-\xi|^2}\,d\xi_*
      \le C_{\alpha,A_1,A_2,d}\frac{1}{(1+|\xi|)^\alpha}.
    \end{align}

    (3). For $\alpha\in[0,d)$, $\beta\in\R$,
    \begin{align}\label{I_lemma13}
      \int_\Rd\frac{1}{|\xi_*-\xi|^\alpha(1+|\xi_*|)^\beta}e^{-A_1|\xi_*-\xi|^2-A_2|b|^2}\,d\xi_*
      \le C_{\alpha,A_1,A_2,d}\frac{1}{(1+|\xi|)^{\beta+1}}
    \end{align}
  \end{Lem}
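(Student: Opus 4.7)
The plan is to prove (1) by an explicit calculation, then deduce (2) by a case split, and finally combine (1) with an exponential-decay argument for (3). Throughout, the central trick is the change of variables $\eta = \xi_*-\xi$, after which the constraint $|b|$ small forces $\omega := \eta/|\eta|$ to be nearly orthogonal to $\xi$, producing a gain of a full power of $|\xi|$ after angular integration.

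For (1), set $\eta = \xi_* - \xi$, so that $d\xi_* = d\eta$ and, with $r = |\eta|$ and $\omega = \eta/r$, one has $b = \bigl(\xi\cdot\omega + r/2\bigr)\omega$, hence $|b| = |\xi\cdot\omega + r/2|$. The case $|\xi|\le 1$ is handled by dropping the $|b|$-weight and using $\alpha<d$. For $|\xi|\ge 1$, pass to polar coordinates, take the polar axis along $\xi/|\xi|$, and reduce the $S^{d-1}$-integration to a one-dimensional integral in $t = \cos\theta$. The substitution $u = |\xi|t + r/2$ yields
\begin{align*}
\int_{S^{d-1}} e^{-A_2(|\xi|\cos\theta + r/2)^2}\,d\omega \le \frac{C_d}{|\xi|}\int_{\R} e^{-A_2 u^2}\,du = \frac{C_{A_2,d}}{|\xi|},
\end{align*}
uniformly in $r$. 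The remaining radial integral $\int_0^\infty r^{d-1-\alpha}e^{-A_1 r^2}\,dr$ converges for $\alpha < d$, giving the bound $C(1+|\xi|)^{-1}$.

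For (2), the equality of the two integrals follows by the substitution $\xi_*\mapsto \xi - \xi_*$. For the estimate, split into $\{|\xi_*|\le |\xi|/2\}$ and $\{|\xi_*|>|\xi|/2\}$. On the first region $|\xi_*-\xi|\ge |\xi|/2$, so the factor $|\xi_*-\xi|^{-\alpha}\le C|\xi|^{-\alpha}$ and the Gaussian integrates to a constant. On the second region, write $e^{-A_1|\xi_*|^2} \le e^{-A_1|\xi|^2/8}e^{-A_1|\xi_*|^2/2}$ and further split $|\xi_*-\xi|\le 1$ versus $|\xi_*-\xi|\ge 1$ to see that $\int|\xi_*-\xi|^{-\alpha}e^{-A_1|\xi_*|^2/2}d\xi_*$ is uniformly bounded in $\xi$ (using $\alpha < d$); the prefactor $e^{-A_1|\xi|^2/8}$ then dominates any polynomial in $|\xi|^{-1}$.

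For (3), case-split on $|\xi_*-\xi|$ relative to $|\xi|$. On $\{|\xi_*-\xi|\le |\xi|/2\}$ one has $(1+|\xi_*|)\sim (1+|\xi|)$, so $(1+|\xi_*|)^{-\beta}\le C(1+|\xi|)^{-\beta}$ irrespective of the sign of $\beta$, and the remaining integral is controlled directly by (1), producing the desired factor $(1+|\xi|)^{-\beta-1}$. On $\{|\xi_*-\xi|>|\xi|/2\}$, extract $e^{-A_1|\xi|^2/16}$ from the Gaussian, absorb the weight via $(1+|\xi_*|)\le C(1+|\xi|)(1+|\xi_*-\xi|)$, and note that the residual integral is finite; this contribution decays faster than any polynomial in $1+|\xi|$, in particular faster than $(1+|\xi|)^{-\beta-1}$.

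The one step that really does the work is the angular integration in part (1): the fact that concentrating $e^{-A_2|b|^2}$ on the sphere $S^{d-1}$ gains a full factor of $|\xi|^{-1}$ (not merely $|\xi|^{-1/2}$ or a logarithm) is what distinguishes this lemma from a naive Gaussian estimate and is what ultimately feeds the decisive bound \eqref{I_eq214}. Once that gain is secured, parts (2) and (3) are essentially case-analysis arguments.
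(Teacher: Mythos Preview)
Your proof is correct and follows the same strategy as the paper: the angular integration in (1) producing the $1/|\xi|$ gain is carried out identically, and (2) and (3) are handled by the same near/far case splits with Gaussian absorption. The only cosmetic difference is that in (3) you split on $|\xi_*-\xi|\lessgtr|\xi|/2$ while the paper splits on $|\xi_*|\lessgtr|\xi|/2$; your choice is marginally cleaner since the two-sided comparability $(1+|\xi_*|)\sim(1+|\xi|)$ on the near region handles both signs of $\beta$ at once.
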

  \begin{proof}
    1. If $\alpha<d$,
    \begin{align*}
      &\int_\Rd \frac{1}{|\xi_*-\xi|^\alpha}\exp(-A_1|\xi_*-\xi|^2-A_2|b|^2)\,d\xi_*\\
      &= \int_\Rd \frac{1}{|\xi_*|^\alpha} \exp(-A_1|\xi_*|^2-\frac{A_2(|\xi_*|^2+2\xi_*\cdot\xi)^2}{4|\xi_*|^2})\,d\xi_*\\
      &= \int^\infty_0 \frac{1}{r^{\alpha-d+1}}\exp(-A_1r^2)\ \int_{\S^{d-1}} \exp(-\frac{A_2(r+2\xi_*\cdot\xi)^2}{4})\,d\sigma(\xi_*)dr.
    \end{align*}
    By integration over sphere, 
    \begin{align*}
      \int_{\S^{d-1}} \exp(-\frac{A_2(r+2\xi_*\cdot\xi)^2}{4})\,d\sigma(\xi_*)
      &= \frac{2\pi^{\frac{d-1}{2}}}{\Gamma(\frac{d-1}{2})}
      \int^1_{-1}\exp(-\frac{A_2(r+2s|\xi|)^2}{4})(1-s^2)^{\frac{d-3}{2}} ds\\
      &\le C_d \int^1_{-1}\exp(-\frac{A_2(r+2s|\xi|)^2}{4}) ds.
    \end{align*}If $|\xi|\ge 1$, we have
    \begin{align*}
      \int^1_{-1}\exp(-\frac{A_2(r+2s|\xi|)^2}{4}) ds &= \frac{1}{2|\xi|}\int^{r+2|\xi|}_{r-2|\xi|}\exp(-\frac{A_2s^2}{4})\,ds \le C_{d,A_2}\frac{1}{1+|\xi|},
    \end{align*}and if $|\xi|\le 1$, we have
    \begin{align*}
      \int^1_{-1}\exp(-\frac{A_2(r-2s|\xi|)^2}{4}) ds &\le 2\le \frac{C}{1+|\xi|}.
    \end{align*}
    These estimate are independent of $r$ and we obtain \eqref{I_lemma11}.

    2. Fix $0\le\alpha<d$. If $|\xi|\le 1$, we have 
    \begin{align*}
      &\int_\Rd \frac{1}{|\xi_*-\xi|^\alpha}\exp(-A_1|\xi_*|^2)\,d\xi_*\\
      &=\int_{|\xi_*-\xi|>2}\frac{1}{|\xi_*-\xi|^\alpha}\exp(-A_1|\xi_*|^2)\,d\xi_*
      +\int_{|\xi_*-\xi|\le2}\frac{1}{|\xi_*-\xi|^\alpha}\exp(-A_1|\xi_*|^2)\,d\xi_*\\
      &\le \int_{|\xi_*|>1}\frac{1}{2^\alpha}\exp(-A_1|\xi_*|^2)\,d\xi_*
      +\int_{|\xi_*-\xi|\le2}\frac{1}{|\xi_*-\xi|^\alpha}\,d\xi_*\\
      &\le C_{A_1,d,\alpha}.
    \end{align*}
    If $|\xi|\ge 1$, notice $|\xi_*-\xi|\le\frac{|\xi|}{2}$ implies $|\xi_*|\ge|\xi|/2\ge|\xi_*-\xi|$, we have
    \begin{align*}
      &\int_\Rd \frac{1}{|\xi_*-\xi|^\alpha}\exp(-A_1|\xi_*|^2)\,d\xi_*\\
      &\le \int_{|\xi_*-\xi|>\frac{|\xi|}{2}}\frac{1}{|\xi_*-\xi|^\alpha}\exp(-A_1|\xi_*|^2)\,d\xi_*
      +\int_{|\xi_*-\xi|\le\frac{|\xi|}{2}}\frac{1}{|\xi_*-\xi|^\alpha}\exp(-A_1|\xi_*|^2)\,d\xi_*\\
      &\le \int_{|\xi_*-\xi|>\frac{|\xi|}{2}}\frac{1}{(\frac{|\xi|}{2})^\alpha}\exp(-A_1|\xi_*|^2)\,d\xi_*\\
      &\qquad\qquad\qquad\qquad\qquad
      +\int_{|\xi_*-\xi|\le\frac{|\xi|}{2}}\frac{1}{|\xi_*-\xi|^\alpha}\exp(-\frac{A_1|\xi_*-\xi|^2}{2})\,d\xi_*\ \exp(-\frac{A_1|\xi|^2}{8})\\
      &\le C_{A_1,\alpha,d}\left(\frac{1}{|\xi|^\alpha}
      + \exp(-\frac{A_1|\xi|^2}{8})\right)\\
      &\le \frac{C_{A_1,\alpha,d}}{(1+|\xi|)^\alpha}.
    \end{align*}
  This proves \eqref{I_lemma12}.

  3. Notice that $|\xi_*|\le |\xi|/2$ implies $|\xi_*-\xi|\ge |\xi|/2$. Thus by \eqref{I_lemma11},
  \begin{align*}
    &\int_\Rd\frac{1}{|\xi_*-\xi|^\alpha(1+|\xi_*|)^\beta}\exp(-A_1|\xi_*-\xi|^2-A_2|b|^2)\,d\xi_*\\
    &\le \left(\int_{|\xi_*|>\frac{|\xi|}{2}} + \int_{|\xi_*|\le\frac{|\xi|}{2}}\right) \frac{1}{|\xi_*-\xi|^\alpha(1+|\xi_*|)^\beta}\exp(-A_1|\xi_*-\xi|^2-A_2|b|^2)\,d\xi_*\\
    &\le \frac{1}{(1+|\xi|/2)^\beta}\int_{|\xi_*|>\frac{|\xi|}{2}}\frac{1}{|\xi_*-\xi|^\alpha}\exp(-A_1|\xi_*-\xi|^2-A_2|b|^2)\,d\xi_*\\
    &\qquad+ \int_{|\xi_*|\le\frac{|\xi|}{2}} \frac{1}{|\xi_*-\xi|^\alpha}\exp(-\frac{A_1|\xi_*-\xi|^2}{2}-A_2|b|^2)\,d\xi_*\ \exp(-\frac{A_1|\xi|^2}{8})\\
    &\le \frac{C_{\beta,\alpha,A_1,A_2}}{(1+|\xi|)^\beta}\left(\frac{1}{1+|\xi|}+\exp(-\frac{A_1|\xi|^2}{8})\right).
  \end{align*}
  This gives \eqref{I_lemma13}.
  \qe\end{proof}

  \begin{proof}[Proof of Theorem \ref{I_ThmL1}]
  1. Firstly using $\M\M_* = \M'\M'_*$, we have
  \begin{align*}
    Lf &= \int_\Rd\int_{\S^{d-1}}\M^{\frac{1}{2}}_*\left(f'_*\left(\M^{\frac{1}{2}}\right)'+f'\left(\M^{\frac{1}{2}}\right)'\right)q(\xi-\xi_*,\theta)\,d\omega d\xi_*\\
    &\qquad - \M^{\frac{1}{2}}\int_\Rd\int_{\S^{d-1}}\M^{\frac{1}{2}}_*q(\xi-\xi_*,\theta)\,dnd\xi_*
    - f\int_\Rd\int_{\S^{d-1}}\M_*q(\xi-\xi_*,\theta)\,dnd\xi_*.
  \end{align*}
  It suffices to deal with the first term, which denoted by $I$. Consider the unit vector $m\in \text{Span}\{\xi-\xi_*,\omega\}$ such that $m\perp \omega$. Then performing a changing variable from $\omega$ to $m$, (where one need to use polar coordinate on $\S^{d-1}$ to do the change of variable,) we have
  \begin{align*}
    \int_\Rd\int_{\S^{d-1}}\M^{\frac{1}{2}}_*\left(\M^{\frac{1}{2}}\right)'f'_*q(\xi-\xi_*,\theta)\,d\omega d\xi_*
    = \int_\Rd\int_{\S^{d-1}}\M^{\frac{1}{2}}_*\left(\M^{\frac{1}{2}}\right)'_*f'q(\xi-\xi_*,\theta)\,d\omega d\xi_*.
  \end{align*}
  Thus by Fubini's theorem,
  \begin{align*}
    I &= 2\int_{\S^{d-1}}\int_\Rd\M^{\frac{1}{2}}_*\left(\M^{\frac{1}{2}}\right)'_*f'q(\xi-\xi_*,\theta)\, d\xi_*d\omega\\
    &=2\int_{\S^{d-1}}\int_\Rd\M^{\frac{1}{2}}(\xi_*+\xi)\M^{\frac{1}{2}}(\xi_*+\xi-(\xi_*\cdot\omega)\omega)f(\xi+(\xi_*\cdot\omega)\omega)q(\xi_*,\theta)\, d\xi_*d\omega\\
    &=4\int_{\S^{d-1}}\int_{\R_+}\int_{\P_\omega}\M^{\frac{1}{2}}(x+r\omega+\xi)\M^{\frac{1}{2}}(x+\xi)f(\xi+r\omega)q(x+r\omega,\theta)\, dxd\sigma(\xi_*)d\omega,
  \end{align*}
  where $\P_\omega$ is the hyperplane in $\Rd$ that is orthogonal to $\omega$ and contains the origin.
  Here we remark that one actually needs the boundedness on $K$ to make sure that Fubini's theorem can be applied.
  Then by change of variable $\xi_*\mapsto \xi_*-\xi$, we have 
  \begin{align*}
    I
    &=4\int_{\Rd}\int_{\P_{\xi_*}}\M^{\frac{1}{2}}(x+\xi_*+\xi)\M^{\frac{1}{2}}(x+\xi)f(\xi+\xi_*)\frac{q(x+\xi_*,\theta)}{|\xi_*|^{d-1}}\,dxd\xi_*\\
    &=4\int_{\Rd}\int_{\P_{\xi_*-\xi}}\frac{1}{(2\pi)^{-\frac{d}{2}}}
    \exp\left(-\frac{|x+a|^2}{2}-\frac{|b|^2}{2}-\frac{|\xi_*-\xi|^2}{8}\right)\frac{f(\xi_*)q(x+\xi_*-\xi,\theta)}{|\xi_*-\xi|^{d-1}}\,dxd\xi_*\\
    &=4\int_{\Rd}\int_{\P_{\xi_*-\xi}}\frac{e^{-\frac{|x|^2}{2}}}{(2\pi)^{-\frac{d}{2}}}q(x-a+\xi_*-\xi,\theta)\,dx\,
    \exp\left(-\frac{|b|^2}{2}-\frac{|\xi_*-\xi|^2}{8}\right)\frac{f(\xi_*)}{|\xi_*-\xi|^{d-1}}\,d\xi_*,
  \end{align*}
  where $\theta$ is angle between $x-a+\xi_*-\xi$ and $\xi_*-\xi$. This gives the expression of $k_1$.

  2. $\nu$ is defined by $
    \nu(\xi) = \int_\Rd\int_{\S^{d-1}}\M^{\frac{1}{2}}_*q(\xi-\xi_*,\theta)\,d\omega d\xi_*$.
  Recall the cut-off assumption \eqref{I_cutoffassumption}, then we have
  \begin{align*}
    \nu(\xi)
    &= q_0(2\pi)^{d/4}\int_\Rd \exp(-\frac{|\xi_*|^2}{4})|\xi-\xi_*|^{-\gamma}d\xi_*.
  \end{align*}
  On one hand, by \eqref{I_lemma12} in lemma \ref{I_lemma1}, we have
  \begin{align*}
    \nu(\xi)
    &= q_0(2\pi)^{d/4}\int_\Rd \exp(-\frac{|\xi_*|^2}{4})|\xi-\xi_*|^{-\gamma}\,d\xi_*
    \le C_{d,\gamma,q_0}(1+|\xi|)^{-\gamma}.
  \end{align*}
  On the other hand, noticing $|\xi_*-\xi|\le 1+|\xi_*|+|\xi|\le (1+|\xi_*|)(1+|\xi|)$, we have
  \begin{align*}
    \nu(\xi)
    &\ge q_0(2\pi)^{d/4}\int_\Rd \exp(-\frac{|\xi_*|^2}{4})(1+|\xi_*|)^{-\gamma}\,d\xi_*\,(1+|\xi|)^{-\gamma}\ge C_{d,\gamma,q_0}(1+|\xi|)^{-\gamma}.
  \end{align*}
  This proves statement (i).

  3. For the part $k_1$, we firstly estimate $J := \int_{\P_{\xi_*-\xi}}e^{-\frac{|x|^2}{2}} q(x-a+\xi_*-\xi,\theta)\,dx$. Notice $a\in\P_{\xi_*-\xi}$ and integral
  is taken in $\P_{\xi_*-\xi}$, we have
  \begin{align*}
    J &\le \int_{\P_{\xi_*-\xi}}e^{-\frac{|x|^2}{2}} |x-a+\xi_*-\xi|^{-\gamma} |\cos\theta|\,dx\\
    &\le \int_{\P_{\xi_*-\xi}}e^{-\frac{|x|^2}{2}} |x-a+\xi_*-\xi|^{-\gamma} \frac{|(x-a+\xi_*-\xi)\cdot(\xi_*-\xi)|}{|x-a+\xi_*-\xi|\,|\xi_*-\xi|}\,dx\\
    &= \int_{\P_{\xi_*-\xi}}e^{-\frac{|x|^2}{2}} \frac{|\xi_*-\xi|}{(|x-a|^2+|\xi_*-\xi|^2)^{(\gamma+1)/2}}\,dx.
  \end{align*}
  If $|\xi_*-\xi|\le 1$, by \eqref{I_lemma12} in lemma \ref{I_lemma1},
  \begin{align*}
    \frac{J}{|\xi_*-\xi|}
    &\le \int_{\P_{\xi_*-\xi}}e^{-\frac{|x|^2}{2}}\frac{1}{|x-a|^{\gamma+1}}\,dx
    \le C_{d,\gamma}\frac{1}{(1+|a|)^{\gamma+1}}.
  \end{align*}
  If $|\xi_*-\xi|> 1$, noticing $|x-a|\le\frac{|a|}{2}$ implies $|x|>\frac{|a|}{2}$, then
  \begin{align*}
    \frac{J}{|\xi_*-\xi|} &\le \left(\int_{\substack{\P_{\xi_*-\xi}\\|x-a|\le\frac{|a|}{2}}}+\int_{\substack{\P_{\xi_*-\xi}\\|x-a|>\frac{|a|}{2}}}\right)
    e^{-\frac{|x|^2}{2}}  \frac{1}{(|x-a|^2+|\xi_*-\xi|^2)^{(\gamma+1)/2}}\,dx\\
    &\le \int_{\substack{\P_{\xi_*-\xi}\\|x-a|\le\frac{|a|}{2}}}e^{-\frac{|x|^2}{4}-\frac{|a|^2}{16}} \frac{1}{|\xi_*-\xi|^{\gamma+1}}\,dx
    +\int_{\substack{\P_{\xi_*-\xi}\\|x-a|>\frac{|a|}{2}}}
    e^{-\frac{|x|^2}{2}}  \frac{1}{(|a|^2/4+|\xi_*-\xi|^2)^{(\gamma+1)/2}}\,dx\\
    &\le C_{d,\gamma}\left(e^{-\frac{|a|^2}{16}} \frac{1}{|\xi_*-\xi|^{\gamma+1}} +\frac{1}{(|a|+|\xi_*-\xi|)^{\gamma+1}}\right)\\
    &\le C_{d,\gamma}\frac{1}{(|a|+|\xi_*-\xi|)^{\gamma+1}},
  \end{align*}by using the fact that
  \begin{equation*}
    \sup_{a\in\Rd,|\xi_*-\xi|\ge 1}e^{-\frac{|a|^2}{16}} \frac{(|a|+|\xi_*-\xi|)^{\gamma+1}}{|\xi_*-\xi|^{\gamma+1}}
    \le
    \begin{cases}
      e^{-\frac{|a|^2}{16}}(2|a|)^{\gamma+1}, &\text{   if }|a|>|\xi_*-\xi|,\\
      2^{\gamma+1}, &\text{   if }|a|\le|\xi_*-\xi|.
    \end{cases}
  \end{equation*}
  Thus for any $\varepsilon\in(0,1)$,
  \begin{align*}
    |k_1(\xi,\xi_*)|
    &\le \frac{C_{d,\gamma}}{|\xi_*-\xi|^{d-2}}\frac{1}{(1+|a|+|\xi_*-\xi|)^{\gamma+1}}\ \exp(-\frac{|b|^2}{2}-\frac{|\xi_*-\xi|^2}{8}),\\
    &\le \frac{C_{d,\gamma}}{|\xi_*-\xi|^{d-2}}\frac{1}{(1+|a|+|b|+|\xi_*-\xi|)^{\gamma+1}}\ \exp(-(1-\varepsilon)(\frac{|b|^2}{2}-\frac{|\xi_*-\xi|^2}{8})),\\
    &\le \frac{C_{d,\gamma,\varepsilon}}{|\xi_*-\xi|^{d-2}}\frac{1}{(1+|\xi|+|\xi_*|)^{\gamma+1}}\ \exp(-(1-\varepsilon)(\frac{|b|^2}{2}-\frac{|\xi_*-\xi|^2}{8})),
  \end{align*}where we use the fact that $|a|^2+|b|^2 = \frac{|\xi_*+\xi|^2}{4}$.
  \qe\end{proof}

  \begin{proof}[Proof of Theorem \ref{I_ThmL2}]
    1. Notice $\nu$ is a bounded positive function and $K$ is linear continuous integral operator with symmetric kernel $k(\xi,\xi_*)$, so $L=\nu+K$ is self-adjoint.

    2. For the non-positiveness, we can use a well-known fact that
    \begin{align*}
      (Q(f,g),\psi)_{L^2} = \frac{1}{4}\int_\Rd\int_{\S^{d-1}}(f'_*g'+f'g'_*-f_*g-fg_*)q(\xi-\xi_*,\theta)(\psi+\psi_*-\psi'-\psi'_*)\,d\omega d\xi_*d\xi,
    \end{align*}which is valid whenever the integral is absolutely convergent.
    Then for $f\in L^2(\Rd)$, since $K$ is bounded on $L^2$ and $\nu\in L^\infty$, we can apply this identity to get
    \begin{align}\label{A_eq186}
      (Lf,f)_{L^2}
      = -\frac{1}{4}\int_\Rd\int_\Rd\int_{\S^{d-1}}
      \left|f'_*\left(\M^{\frac{1}{2}}\right)'+f'\left(\M^{\frac{1}{2}}\right)'_*-f_*\M^{\frac{1}{2}}-f\M^{\frac{1}{2}}_*\right|^2q(\xi-\xi_*,\theta)\,d\omega d\xi_*d\xi \le 0.
    \end{align}

  3. Let $f\in L^2$ such that $Lf=0$. Then $(Lf,f)_{L^2}=0$. Using \eqref{A_eq186} and the fact $\M\M_*=\M'\M'_*$, we have for a.e. $\xi,\xi_*\in\Rd$, $\omega\in\S^{d-1}$ that
  \begin{align*}
    f'_*\left(\M^{-\frac{1}{2}}\right)'_*+f'\left(\M^{-\frac{1}{2}}\right)' = f_*\M^{-\frac{1}{2}}_*+f\M^{-\frac{1}{2}}.
  \end{align*}
  Thus by the theory of collision invariant, $f\in$ Span$\{\M^{\frac{1}{2}}, \xi_1\M^{\frac{1}{2}},  \dots,  \xi_d\M^{\frac{1}{2}}, |\xi|^2\M^{\frac{1}{2}}\}$.
  \qe\end{proof}

\begin{proof}[Proof of lemma \ref{V_lemma}]
	1. Firstly we notice that 
	\begin{align*}
	1+|\xi|&\le (1+|\xi|)(1+|\xi_*|),\\
	1+|\xi|&\le 1+(|\xi'|^2+|\xi'_*|^2)^{1/2}\le 1+|\xi'|+|\xi'_*|\le (1+|\xi'|)(1+|\xi'_*|).
	\end{align*}
	Then for $f,g\in L^\infty_\beta(\Rd)$,
	\begin{align*}
	|\Gamma(f,g)|
	&= \M^{-1/2}\int_\Rd\int_{S^{d-1}}
	\Big|\M'^{1/2}f'\left(\M^{\frac{1}{2}}\right)'g'_* + \left(\M^{\frac{1}{2}}\right)'f'_*\M'^{1/2}g'\\
	& \qquad\qquad\qquad\qquad - \M^{1/2}f\M_*^{1/2}g_* - \M_*^{1/2}f_*\M^{1/2}g \Big|q(\xi-\xi_*,\theta)\,d\omega d\xi_*\\
	&= \int_\Rd\int_{S^{d-1}}
	\M_*^{1/2}\big(|f'g'_*| + |f'_*g'|+ |fg_*|+|f_*g|\big)q(\xi-\xi_*,\theta)\,d\omega d\xi_*\\
	&=: \Gamma_1(f,g)+\Gamma_2(f,g)+\Gamma_3(f,g)+\Gamma_4(f,g).
	\end{align*}
	Thus
	\begin{align*}
	&\|\Gamma(f,g)\|_{L^\infty_{\beta+\gamma}(\Rd)}\\
	&\le \sup_{\xi\in\Rd}\int_\Rd\int_{S^{d-1}}
	\M_*^{1/2}|f'g'_*| + |f'_*g'|+ |fg_*| +|f_*g|) (1+|\xi|)^{\beta+\gamma}q(\xi-\xi_*,\theta)\,d\omega d\xi_*\\
	&\le \sup_{\xi\in\Rd}(1+|\xi|)^\gamma \int_\Rd\int_{S^{d-1}}
	\M_*^{1/2}\Big(|(1+|\xi'|)^{\beta}f'(1+|\xi'_*|)^{\beta}g'_*| + |(1+|\xi'_*|)^{\beta}f'_*(1+|\xi'|)^{\beta}g'|\\
	&\qquad\qquad + |(1+|\xi|)^{\beta}f(1+|\xi_*|)^{\beta}g_*| +|(1+|\xi_*|)^{\beta}f_*(1+|\xi|)^{\beta}g|\Big)q(\xi-\xi_*,\theta)\,d\omega d\xi_*\\
	&\le 4\|f\|_{L^\infty_\beta}\|g\|_{L^\infty_\beta}
	\sup_{\xi\in\Rd}(1+|\xi|)^\gamma \int_\Rd\int_{S^{d-1}}
	\M_*^{1/2}q(\xi-\xi_*,\theta)\,d\omega d\xi_*\\
	&\le 4\nu_1\|f\|_{L^\infty_\beta}\|g\|_{L^\infty_\beta}.
	\end{align*}
	On the other hand, $l>\frac{d}{2}$ implies that the Sobolev space $H^l$ is a Banach algebra. Thus for $f,g\in L^\infty_\beta(H^l)$, $\|\Gamma_j(f,g)\|_{H^l}\le \Gamma_j(\|f\|_{H^l},\|g\|_{H^l})$. This proves assertion (1).
	
	2. For $\beta_0>\frac{d}{2}$, we have
	\begin{align*}
	\|f\|_{L^2_{\alpha\gamma}}&\le\|f\|_{L^\infty_{\beta_0+\alpha\gamma}}\|(1+|\xi|)^{-2\beta_0}\|_{L^2}\le C_{\beta_0}\|f\|_{L^\infty_{\beta_0+\alpha\gamma}}.
	\end{align*}
	For $f,g\in L^2_{\alpha\gamma}(L^1)$, $\beta_0>\frac{d}{2}$,
	\begin{align*}
	&\|\Gamma(f,g)\|_{L^2_{\alpha\gamma}(L^1)}\\
	&\le \|\int_\Rd\int_{S^{d-1}}
	\M_*^{1/2}\big(|f'g'_*| + |f'_*g'|+ |fg_*| +|f_*g|\big)
	q(\xi-\xi_*,\theta)\,d\omega d\xi_*\|_{L^2_{\alpha\gamma}(L^1)}\\
	&\le C_{\beta_0}\|\int_\Rd\int_{S^{d-1}}
	\M_*^{1/2}\big(\|f'g'_*\|_{L^1_x} + \|f'_*g'\|_{L^1_x}+ \|fg_*\|_{L^1_x} +\|f_*g\|_{L^1_x}\big) q(\xi-\xi_*,\theta)\,d\omega d\xi_*\|_{L^\infty_{\beta_0+\alpha\gamma}}\\
	&\le C_{\beta_0}\|\Gamma(\|f\|_{L^2_x},\|g\|_{L^2_x})\|_{L^\infty_{\beta_0+\alpha\gamma}}\\
	&\le C_{\beta_0,\nu_1}
	\|f\|_{L^\infty_{\beta_0-\gamma+\alpha\gamma}(L^2_x)}\|g\|_{L^\infty_{\beta_0-\gamma+\alpha\gamma}(L^2_x)}.
	\end{align*}
	If $\beta>\frac{d}{2}-\gamma+\alpha\gamma$, then we can find $\beta_0\in(\frac{d}{2},\beta+\gamma-\alpha\gamma)$ to make the above inequality valid. Then
	\begin{align*}
	\|\Gamma(f,g)\|_{L^2_{\alpha\gamma}(L^1)}&\le C_{\beta_0,\nu_1}\|f\|_{L^\infty_{\beta}(L^2_x)}\|g\|_{L^\infty_{\beta}(L^2_x)}
	\le C_{\beta_0,\nu_1}\|f\|_{L^\infty_{\beta}(H^l)}\|g\|_{L^\infty_{\beta}(H^l)}.
	\end{align*}
	\qe\end{proof}

{\bf Acknowledgment} The author would like to thank Professor Yang Tong and the
colleagues and reviewers for their valuable comments to this paper.


%
%
%

\bibliographystyle{plain}
\bibliography{1.bib}

\end{document}